\theoremstyle{plain} 
\newtheorem{thm}{Theorem}[section] 
\newtheorem{defn}{Definition}[section] 
\newtheorem{prop}{Proposition}[section]
\newtheorem{cor}{Corollary}[section]
\newtheorem{lem}{Lemma}[section]
\theoremstyle{remark} 
\newtheorem{obs}{Remark}
\newcommand{\C}{{\mathbb{C}}}   
\newcommand{\E}{{\mathbb{E}}}   
\newcommand{\Prob}{{\mathbb{P}}}   
\newcommand{\AAA}{{\mathcal{A}}} 
\newcommand{\CC}{{\mathcal{C}}}
\title{Upper bounds for the largest component in critical inhomogeneous random graphs}
\author{Umberto De Ambroggio\thanks{University of Bath, Department of Mathematical Sciences - umbidea@gmail.com} \and Angelica Pachon\thanks{University of South Wales, Faculty of Computing, Engineering and Science - angelica.pachon@southwales.ac.uk}}
\begin{document}
	
	\maketitle

	\begin{abstract}
		We consider the Norros-Reittu random graph $NR_n(\textbf{w})$, where edges are present independently but edge probabilities are moderated by vertex weights, and use probabilistic arguments based on martingales to study component sizes in this model when considered at criticality. In particular, we obtain stronger bounds (with respect to those available in the literature) for the probability of observing an unusually large maximal cluster and simplify the arguments needed to derive (polynomial) bounds for the probability of observing an unusually small largest component.
	\end{abstract}

	\section{Introduction}
	During the last few decades, much attention in the field of random graphs has been devoted to create models capable of capturing the complexity of real-world networks.
	In \cite{New} it has been observed that many real-world networks are \textit{inhomogeneous}, in the sense that they may contain distinct groups of vertices behaving differently from a probabilistic point of view. 
	
	Inhomogeneous random graphs are random graph models in which edges are present independently and the probability of presence of a given edge depends on the vertices incident to it. Such random graphs were studied extensively in the seminal paper by Bollob\'as, Janson and Riordan \cite{BJR}. In this paper (see Theorem 3.1 in \cite{BJR}) the size of the largest components was analysed in the sub- and super-critical regimes. 
	The class of models studied in \cite{BJR} is very general and includes previous inhomogeneous random graphs like the one introduced in \cite{BJR2}. 
	
	Other models of inhomogeneous random graphs were considered in  \cite{CL1, CL2,CL3},   \cite{NRn}, and in  \cite{BDML}. These models are called \textit{rank-1 inhomogeneous random graphs} in \cite{BJR}; see section 16.4 in \cite{BJR} for a discussion about how these models are related to the general inhomogeneous random graph studied there.
	
	\cite{Hofstadt} considered the Norros-Reittu model, in which vertices are endowed with weights and each edge is present between a pair of distinct vertices (independently and) with a probability that is approximately proportional to the product of the weights of the vertices in the edge, and analysed component sizes in this model when considered at criticality. In particular, in \cite{Hofstadt} it has been shown how the size of the largest components depends sensitively on the asymptotic degree sequences of these graphs, i.e the sequences formed by the limiting proportions of vertices with degree $k$, for $k\geq 1$.
	
	During the last years inhomogeneous random graphs were further investigated by Kang, Koch and Pachon in \cite{KKP}, by  \cite{Pen}, and by Kang, Pachon and Rodriguez in \cite{KP}. 
	
	In the former work, the authors studied the near-critical behaviour of the so-called $2$-type binomial random graph. In this model, each one of the $n$ vertices is either of type $1$ or $2$, so $n=n_1+n_2$ where $n_i$ is the number of vertices of type $i=1,2$. An edge between a pair of vertices of types $i$ and $j$ is present with probability $p_{i,j}$ ($i,j=1,2$), independently of all other pairs. In the weakly supercritical regime, i.e. when the distance to the critical point characterising the phase transition is given by an $\epsilon = \epsilon_{n_1,n_2} \rightarrow 0$ ($n_1\geq n_2\rightarrow \infty$), the behaviour of the random graph depends very sensitively on the model parameters and, as a consequence, it could not be analysed using the parametrization in \cite{BJR}. However, the authors managed to show in \cite{KKP}, that with probability tending to one, the size of the largest component in this regime contains asymptotically $(2+o(1))\epsilon n$ vertices and all other components are of size  $o(\epsilon n)$, whenever $\epsilon^3 n_2 (1 \wedge \epsilon^{-1}p_{2,1}n_1)\rightarrow \infty$.
	
	Concerning the work in  \cite{Pen}, the author considered a graph on randomly scattered points in an arbitrary space in which any two elements $v$ and $u$ in this space are connected with a probability depending on the points $v$ and $u$, and studied the number of vertices of fixed degree, the number of components of fixed order, and the number of edges.
	
	Concerning \cite{KP}, instead, the authors considered an inhomogeneous random graph obtained in a constructive way from the Erd\H{o}s-R\'enyi random graph. Specifically, in their model the $n$ vertices of the Erd\H{o}s-R\'enyi graph are grouped into $N$ subsets of $\{1,\dots,n\}$, called \textit{super-vertices}, and then they defined a random graph on the $N$ super-vertices by letting any two of them being connected if, and only if, there is at least one edge between them in the Erd\H{o}s-R\'enyi graph. For this model, they studied the degree distribution, the threshold for connectedness, and further identified the phase transition for the emergence of the giant component.
	
	\textcolor{black}{In this paper we consider the critical Norros-Reittu random graph as described in \cite{Hofstadt} and adapt the martingale method introduced by \cite{NP,NCHperc} (used by the authors to to study component sizes in the near-critical Erd\H{o}s-R\'enyi random graph and in the random graph obtained through near-critical percolation on a (simple) random $d$-regular graph) to analyse the critical behaviour of the Norros-Reittu model (see also \cite{PER}, \cite{DEA} and \cite{DEAROB,DEAROB2} for recent results in this direction).}
	
	In particular, we show that the martingale method of Nachmias and Peres yields better upper bounds for the probability of observing unusually large maximal components with respect to those established in \cite{Hofstadt} \textcolor{black}{(provided we strengthen a condition related to the distribution function that specifies the vertex weights, as we explain later).}
	
	We also derive upper bounds for the probability of observing unusually small maximal components. Even though the latter bounds are not stronger than those in \cite{Hofstadt}, our proofs only rely on probabilistic arguments and avoid some of the involved analytical calculations used in \cite{Hofstadt}.
	
	A similar approach to the one introduced by \cite{NP,NCHperc} was used in \cite{HATMOL} to analyse the critical behaviour of a random graph with a given degree sequence. 
	
	However, to the best of our knowledge, the martingale argument of Nachmias and Peres has never been used to analyse inhomogeneous random graphs. \\
	

	\textbf{\textit{Structure of the paper.}} We start by formally introducing the model in Section 2, and proceed by stating our main results in Section 3. Subsequently, in Section 4, we describe a connection between clusters exploration in the random graph model considered in this paper and a particular class of branching processes, and we conclude with Section 5 where we prove our results.\\

	\textbf{\textit{Notation.}} \textcolor{black}{We denote by $\mathbb{N}_0$ the set of non-negative integers and set $[n]\coloneqq \{1,\dots,n\}$ for $n\in \mathbb{N}$.} Given two sequences of real numbers $(x_n)_n$ and $(y_n)_n$ we write $x_n=O(y_n)$ provided that, for all large enough $n$, we have $x_n\leq Cy_n$ for some finite constant $C>0$. We either write $x_n=o(y_n)$ or $x_n\ll y_n$ if $x_n/y_n\rightarrow 0$ as $n\rightarrow \infty$, and we write $x_n\asymp y_n$ if $x_n=O(y_n)$ and $y_n=O(x_n)$. Given any two real numbers $a$ and $b$, we set $a\wedge b\coloneqq \min\{a,b\}$ and $a\vee b\coloneqq \max\{a,b\}$. If $G=(V(G),E(G))$ is a (simple, undirected) graph, we write $v\leftrightarrow u$ if there exists a path of occupied edges connecting vertices $v$ and $u$; we adopt the convention that $v\leftrightarrow v$ for every vertex $v$. Moreover, we denote by $\CC(v)\coloneqq \{u\in V(G) :u\leftrightarrow v\}$ the connected component (or simply component, cluster) of vertex $v$. We denote the size of $\CC(v)$ by $|\CC(v)|$, and define a largest component $\CC_{max}$ to be any cluster $\CC(v)$ for which $|\CC(v)|$ is maximal; hence $|\CC_{max}|=\text{max}_{v\in V(G)}|\CC(v)|$. The letters $C,C',C'',c,c',c''$ etc. are reserved for constants appearing throughout the proofs, and each one of them could be used many times in a single proof even though its actual value may change from line to line.

	\section{The model of Norros and Reittu}
	The random graph model that we investigate has vertex set $[n]$ and vertices are endowed with \textit{weights}, which are used to model the tendencies of vertices to establish connections with other nodes. Specifically, let $\mathbf{w}=(w_i)_{i\in [n]}$ be a sequence of positive real numbers, which we call the sequence of vertex weights. Define $l_n\coloneqq \sum_{i\in[n]}w_i$, the sum of all weights. 
	
	The Norros-Reittu random graph, denoted by $NR_n(\mathbf{w})=([n],E(\textbf{w}))$ and introduced in \cite{NRn}, is an inhomogeneous random graph where, for $1\leq i<j\leq n$, the probability that the edge $ij$ is present is given by
	\begin{equation}\label{pij}
	\Prob(ij\in E(\mathbf{w}))=  1-e^{-w_iw_j/l_n},
	\end{equation}
	and edges are present independently. As explained in \cite{JAN2} and further remarked in section 1.3.5 in \cite{Hofstadt}, the $NR_n(\mathbf{w})$ random graph is closely related to the model studied in \cite{CL1,CL2,CL3}, so that the results proved for the $NR_n(\mathbf{w})$ random graph apply as well to these other models.
	
	It is intuitively clear that the topology of the graph is highly dependent upon the choice of the sequence $\mathbf{w}$, which we now specify.
	
	Let $F:\mathbb{R}\mapsto [0,1]$ be a distribution function. We construct the weights as in \cite{Hofstadt}, namely we set
	\begin{equation}\label{wj}
	w_j\coloneqq [1-F]^{-1}(j/n),\hspace{0.2cm}j\in [n],
	\end{equation}
	where $[1-F]^{-1}$ is the \textit{generalized inverse} of $1-F$, defined by $[1-F]^{-1}(1)\coloneqq 0$ and
	\begin{equation}\label{F}
	\hspace{0.3cm}[1-F]^{-1}(u)\coloneqq \inf\{s:1-F(s)\leq u\}, \quad u\in(0,1).
	\end{equation}
	Notice that $w_{i}\geq w_{i+1}$ for all $1\leq i\leq n-1$. Indeed, if $1-F(s)\leq i/n$ then clearly $1-F(s)\leq (i+1)/n$ too, and therefore $\{s:1-F(s)\leq i/n \} \subset \{s:1-F(s)\leq (i+1)/n \}$, so that taking the infimum in both sets we obtain $w_{i}\geq w_{i+1}$.
	
	In \cite{BJR} it has been shown that in the random graph $NR_n(\mathbf{w})$ with vertex weights as in \eqref{wj}, the number of vertices having degree $k$, denoted by $N_k$, satisfies (as $n\rightarrow \infty$)
	\[\frac{N_k}{n}\overset{\Prob}{\longrightarrow}p_k\coloneqq \E\Big(e^{-W}\frac{W^k}{k!}\Big)\hspace{0.15cm}k\geq 0,\]
	where $W$ is a $(0,\infty)$-valued random variable with distribution function $F$ \textcolor{black}{(and $\overset{\mathbb{P}}{\rightarrow}$ stands for convergence in probability).} We recognize the limiting sequence $(p_k)_{k\geq 0}$ as a so-called \textit{mixed Poisson distribution with mixing distribution $F$}. (Given a random variable $Z$ with distribution function $F_Z$, we say that $X$ follows a mixed Poisson distribution with mixing distribution $F_Z$ when, conditionally on $Z=z$, $X$ is distributed as a Poisson random variable with mean $z$.)
	
	In order to describe the phase transition in this model, we introduce the parameter
	\begin{equation}\label{nu}
	\nu\coloneqq\E(W^2)/\E(W).
	\end{equation}
	As we explain in Section \ref{comparisonsection} (in particular, see Remark \ref{remcritparameter}) this positive real number corresponds to the (asymptotic) mean of the offspring distribution in a branching process approximation of the clusters exploration in $NR_n(\mathbf{w})$.
	
	In \cite{BJR} it was shown that the graph undergoes a phase transition as $\nu$ passes $1$. In particular, if $\nu>1$ the largest component contains approximately $n\gamma$ vertices (where $\gamma\in (0,1)$), \textcolor{black}{whereas if $\nu \leq 1$ a largest component contains} a vanishing proportion of vertices. When $\nu>1$ the random graph is said to be \textit{super-critical}, whereas when $\nu< 1$ it is called \textit{sub-critical}. Finally, when $\nu=1$, the random graph is said to be \textit{critical}.
	
	In \cite{Hofstadt} the author provided a complete picture of the component structure in the critical $NR_n(\textbf{w})$ model when $1-F(x)=\Prob(W>x)$ decays as a power law and $\mathbf{w}=(w_i)_{i\in [n]}$ is as in \eqref{wj}. 
	
	More specifically, in \cite{Hofstadt} (Theorems 1.1 and 1.2) \textcolor{black}{it was shown that when $\nu=1$ and }
	\begin{equation}\label{six}
	1-F(x)\leq c_Fx^{-(\tau-1)}\hspace{0.2cm}(x\geq 0)
	\end{equation}
	for some constants $c_F>0,\tau>4$, then there is a constant $b>0$ such that, for any $A>1$ and for all $n\geq 1$, the $NR_n(\mathbf{w})$ random graph satisfies
	\begin{equation}\label{pp1}
	\mathbb{P}(A^{-1}n^{2/3}\leq |\CC_{max}|\leq A n^{2/3})\geq 1-b/A.
	\end{equation}
	On the other hand, \textcolor{black}{when $\nu=1$ and }
	\begin{equation}\label{five}
	\lim_{x\rightarrow \infty}x^{-(\tau-1)}(1-F(x))=c_F
	\end{equation}
	for some constants $c_F>0,\tau \in (3,4)$, then there exists a constant $b>0$ such that for any $A>1$ and for all $n\geq 1$, the $NR_n(\mathbf{w})$ model satisfies
	\begin{equation}\label{pp2}
	\mathbb{P}(A^{-1}n^{(\tau-2)/(\tau-1)}\leq |\CC_{max}|\leq A n^{(\tau-2)/(\tau-1)})\geq 1-b/A.
	\end{equation}
	(Actually  in \cite{Hofstadt} it is established a more general result, namely that \eqref{pp1} and \eqref{pp2} remain valid also after a small \textit{perturbation} of the vertex weights; see Theorems 1.1 and 1.2 in \cite{Hofstadt}.)
	
	For an explanation of the critical behaviour described by \eqref{pp1} and \eqref{pp2}, see section 1.3 in \cite{Hofstadt}, where the author also provided an heuristic description concerning the scaling limit of cluster sizes in both regimes $\tau\in (3,4)$ and $\tau>4$, studied extensively in \cite{Hscal1} and \cite{Hscal2}. (See also \cite{DVDH} for recent results concerning the scaling limits in the critical configuration model.)

	\section{Results}
	Our main results are the following three theorems, which we prove using probabilistic arguments based on martingales along the lines in \cite{NP,NCHperc}.
	\begin{thm}\label{teo1taugreater4}
		Let $\mathbf{w}=(w_i)_{i\in [n]}$ be defined as in \eqref{wj}. Suppose that there exist constants $\tau>4$ and $c_F>0$ such that $1-F(x)\leq c_Fx^{-(\tau-1)}$ for all $x\geq 0$. \textcolor{black}{Then, for any $A\geq 1$ and for all large enough $n$ we have
			\begin{equation}\label{firstbound}
			\Prob(|\CC_{\max}|>A n^{2/3})\leq \frac{c_1}{A}
			\end{equation}
			and
			\begin{equation}\label{lowertaugreater4}
			\Prob(|\CC_{\max}|<n^{2/3}/A)\leq \frac{c_2}{A^{1/4}},
			\end{equation}
			where $c_1,c_2>0$ are finite constants which depend on $c_F$ and $\tau$.}
	\end{thm}
	Strengthening our assumption on the distribution function $F$ which specifies the vertex weights $w_i$ through \eqref{wj}, we can prove a similar result for the case $\tau\in (3,4)$. \textcolor{black}{In particular, for the next two results we assume that there are constants $c_F>0$ and $\tau\in (3,4)$ such that
		\begin{equation}\label{COND}
		F(x)=1-c_Fx^{-(\tau-1)} \text{ for }x\geq c^{1/(\tau-1)}_F \text{ and }F(x)=0 \text{ for }x<c^{1/(\tau-1)}_F.
		\end{equation}
		As discussed in \cite{Hscal2}, in this case we have
		\[\mathbb{E}(W)=c^{1/(\tau-1)}_F\frac{\tau-1}{\tau-2} \text{ and }\mathbb{E}(W^2)=c^{1/(\tau-1)}_F\frac{\tau-1}{\tau-3},\]
		whence
		\[\nu=\frac{\mathbb{E}(W^2)}{\mathbb{E}(W)}=c^{1/(\tau-1)}_F\frac{\tau-2}{\tau-3}.\]
		Therefore criticality is reached when $c^{1/(\tau-1)}_F=(\tau-3)(\tau-1)^{-1}$. The main advantage for assuming an explicit  analytical form for $1-F(x)$ as given by (\ref{COND}) is that, in this case, we have an \textit{exact} expression for the vertex weights, which helps the computations.}
	\begin{thm}\label{teo2tauin34}
		Let $\mathbf{w}=(w_i)_{i\in [n]}$ be defined as in \eqref{wj}. \textcolor{black}{Suppose that there exist constants $\tau\in (3,4)$ and $c_F>0$ such that (\ref{COND}) holds. Then, for any $A\geq  1$ and for all large enough $n$, we have
			\begin{equation}\label{secondbound}
			\Prob\Big(|\CC_{\max}|> A n^{(\tau-2)/(\tau-1)}\Big)\leq  \frac{c_3}{A},
			\end{equation}
			and 
			\begin{equation}\label{lowertauinthreefour}
			\Prob\Big(|\CC_{\max}|< n^{(\tau-2)/(\tau-1)}/A\Big)\leq \frac{c_4}{A},
			\end{equation}
			where $c_3,c_4>0$ are finite constants which depend on $c_F$ and $\tau$.}
	\end{thm}
	\textcolor{black}{\begin{obs}
			Throughout the rest of the article, sometimes we keep writing that constants depend on $\tau$ \textit{and} $c_F$ even though, in the case where (\ref{COND}) is assumed, the dependence is only in terms of $\tau$ since in this case, as we have seen earlier, criticality ($\nu=1$) is reached when $c^{1/(\tau-1)}_F=(\tau-3)(\tau-1)^{-1}$.
	\end{obs}}
	\begin{obs}
		\textcolor{black}{With some extra effort it would be possible to provide an expression for the constants $c_i$ which appear in the statements of Theorems \ref{teo1taugreater4} and \ref{teo2tauin34}.} However, we refrained to do so in order to provide simpler calculations.
	\end{obs}
	Next result shows that we can considerably improve the polynomial upper bounds which appear in \eqref{firstbound} and \eqref{secondbound}. To achieve this, however, we need to have at our disposal the precise analytical form of $1-F(x)$ in both cases $\tau\in (3,4)$ and $\tau>4$. \textcolor{black}{That is, we need to assume that (\ref{COND}) holds in both regimes. We believe though that the exponential bounds displayed in the next result can be achieved without assuming (\ref{COND}). In particular, for the case $\tau>4$, assuming $1-F(x)\leq c_Fx^{-(\tau-1)}$ would suffice to obtain exponential tail probabilities.}
	\begin{thm}\label{teoexp}
		\textcolor{black}{Let $\mathbf{w}=(w_i)_{i\in [n]}$ be defined as in \eqref{wj}. Suppose that there exist constants $\tau>3$ and $c_F>0$ such that (\ref{COND}) holds. Then there exist constants $n_0\in \mathbb{N}$ and $A_0\geq 1$ such that the following statements hold. If $\tau>4$ then, for any $A_0\leq A =O\Big(n^{\frac{(\tau-4)\wedge 1}{3(\tau-1)}}\Big)$ and for all $n\geq n_0$, we have
			\begin{equation*}
			\mathbb{P}(|\CC_{\max}|>An^{2/3})\leq \frac{c_5}{A}e^{-c_6A^2(A-4)},
			\end{equation*}
			for some finite constants $c_5,c_6>0$ which depend on $\tau$.
			If $3<\tau<4$ then, for any $A_0\leq A =O\left(n^{\frac{(5-\tau)}{3(\tau-1)}}\right)$ and for all $n\geq n_0$, we have	
			\begin{equation*}
			\mathbb{P}(|\CC_{\max}|>An^{\frac{\tau-2}{\tau-1}})\leq \frac{c_7}{A}e^{-c_8A},
			\end{equation*}
			for some finite constants $c_7,c_8>0$ which depend on $\tau$. }
	\end{thm}
	\textcolor{black}{\begin{obs}
			We remark that it would be possible to provide expressions for the constants $c_6,c_8$ which appear in the argument of the exponential functions in the previous theorem; for instance, we can compute that 
			\[c_6=\frac{\big(\frac{\tau-2}{\tau-1}\big)^2}{128\big(\frac{\mathbb{E}(W^3)}{\mathbb{E}(W)}+4\big)}.\]
			However, most likely these constants are not the \textit{exact} constants in the asymptotic expansion of $\mathbb{P}(|\mathcal{C}_{\max}|>k)$, whence we preferred to report only the dependence on $A$ that we managed to obtain with the martingale method. Moreover, the constant $A_0$ could also be computed (actually do so in our proof); but we preferred to be a bit less precise for the sake of readability. 
	\end{obs}}
	
	\begin{obs}
		Comparing our estimates in \eqref{firstbound} and \eqref{secondbound} with those appearing in \eqref{pp1} and \eqref{pp2}, we see that our arguments allow us to recover the bounds in \cite{Hofstadt} provided that, for the case $\tau \in (3,4)$, we strengthen our assumption concerning the distribution function $F$ which specifies the vertex weights. Indeed, for the case $\tau\in (3,4)$, \cite{Hofstadt} only assumed that $\lim_{x\rightarrow\infty}x^{\tau-1}(1-F(x))=c_F$, whereas in our proof of Theorem \ref{teo2tauin34} we make use of the precise analytical form of $F(x)$ for \textit{every} $x$, and not only for \textit{large} $x$, whence the assumption $\lim_{x\rightarrow\infty}x^{\tau-1}(1-F(x))=c_F$ does not suffice. Under this stronger assumption, however, we can considerably strengthen the polynomial bounds stated in \eqref{firstbound} and \eqref{secondbound}, as we manage to obtain exponential upper bounds as illustrated in Theorem \ref{teoexp}. On the other hand, our upper bound in \eqref{lowertaugreater4} for the probability of observing an unusually small maximal component for the case $\tau>4$ is weaker with respect to the one established in \cite{Hofstadt}, but our proof only rely on probabilistic arguments and do not require involved analytical calculations as in \cite{Hofstadt}.
	\end{obs}
	To prove our main results we exploit a connection (which we describe in the next section) between clusters exploration in $NR_n(\textbf{w})$ and a suitable family of branching processes, first appeared in \cite{NRn} and also used in \cite{Hofstadt}.

	\section{Branching process approximation of clusters exploration}\label{comparisonsection}
	We start by describing the clusters exploration in $NR_n(\textbf{w})$ and subsequently we construct branching processes for which the exploration of (reduced versions of) their generated trees resembles that of components in the random graph $NR_n(\mathbf{w})$. More specifically, we begin by describing three alternatives procedures to explore clusters in the $NR_n(\mathbf{w})$ model, which we call \textbf{Alg.1}, \textbf{Alg.2} and \textbf{Alg.3}, and subsequently we compare these three approaches to other three procedures, called \textbf{Alg.1.BP}, \textbf{Alg.2.BP} and \textbf{Alg.3.BP}, which we later use to explore the above-mentioned branching process trees.
	In particular, we use \textbf{Alg.1} and \textbf{Alg.1.BP} to prove our upper bounds for the probability of observing unusually large maximal components in both ranges $\tau\in (3,4)$ and $\tau>4$, while we use \textbf{Alg.2} and \textbf{Alg.2.BP} to bound the probability of observing unusually small maximal clusters for the case $\tau>4$. The \textcolor{black}{probability of observing unusually small maximal components for the case} $\tau\in (3,4)$ is analysed by means of \textbf{Alg.3} and \textbf{Alg.3.BP}. These three approaches only differ in the way we choose the vertex from which we start exploring. Indeed, with \textbf{Alg.1} we start the exploration from a vertex selected uniformly at random, whereas in \textbf{Alg.2} the vertex from which we start the procedure is selected with probability proportional to its weight. Finally, in \textbf{Alg.3} we (deterministically) start the procedure from vertex $1$. In due course we will explain why these three similar, yet different explorations are indeed useful for us. Our descriptions somehow follow the one appearing in \cite{DEAROB}; see also \cite{NP}, \cite{DEA} and references therein.
	
	Let $G=([n],E)$ be any simple (undirected) random graph. 
	During our exploration process, each vertex will be \textit{active, explored} or \textit{unseen} and its status will change during the course of the procedure. At each time $t\in[n]$, a vertex is explored, so that at time $t$ there are $t$ explored vertices. In particular, at time $t=n$ all vertices in $G$ are in status explored.
	
	The exploration starts from a vertex $V_n$, which is selected in different ways according to the algorithm at hand, as we now describe. In \textbf{Alg.1}, the vertex $V_n$ is sampled uniformly at random from the vertex set $[n]$; in \textbf{Alg.2}, we let $V_n=i$ with probability $w_i/l_n$ for $i\in [n]$ (where we recall that $l_n=\sum_{j=1}^{n}w_j$ is the sum of all weights); finally, in \textbf{Alg.3} we (deterministically) choose $V_n=1$. 
	
	At time $t=0$ we set $V_n$ to active and all the other vertices are declared unseen.  
	Denote the set of unseen, active and explored vertices at the end of step $t$, by $\mathcal{U}_t$, $\mathcal{A}_t$ and $\mathcal{E}_t$, respectively. Hence we have that $\mathcal{A}_0=\{V_n\}$, $\mathcal{U}_0=[n]\setminus \{V_n\}$, and $\mathcal{E}_0=\emptyset$ (the empty set). At time $t=1$ we reveal all the \textit{unseen} neighbours of $V_n$; that is, we reveal all the vertices directly connected to $V_n$ in $G$. If we denote by $\mathcal{U}^*_1$ this subset of $\mathcal{U}_0$, then we have $\mathcal{U}^*_1=\{j\in \mathcal{U}_0: V_n \sim j\}$. Change the status of the vertices in $\mathcal{U}^*_1$ to active and declare $V_n$ explored, so that $\mathcal{A}_1=\mathcal{U}^*_1$, $\mathcal{E}_1=\{V_n\}$ and $\mathcal{U}_1=[n]\setminus (\mathcal{A}_1\cup \mathcal{E}_1)$. 
	Then we continue in this fashion. Namely, for every $t>1$, we proceed as follows.
	\begin{itemize}
		\item [(a)] If $|\mathcal{A}_{t-1}|\geq 1$ (i.e. if there is at least one active vertex at the end of step $t-1$), we let $u_{t}$ be the vertex in $\mathcal{A}_{t-1}$ with the smallest label.
		\item [(b)] If \textcolor{black}{$|\mathcal{A}_{t-1}|=0$ and $|\mathcal{U}_{t-1}|\geq 1$}, we let $u_{t}$ be a vertex chosen from $\mathcal{U}_{t-1}=[n]\setminus \mathcal{E}_{t-1}$ (the set of unseen vertices at the end of step $t-1$) with probability proportional to its weight, i.e. we let $u_t=j$ with probability $w_j/l'_n(t)$, where $l'_n(t)\coloneqq \sum_{i\in[n]\setminus\mathcal{E}_{t-1}}w_i$. 
		\item [(c)] If $|\AAA_{t-1}|=0=|\mathcal{U}_{t-1}|$ then $\mathcal{E}_{t-1}= [n]$; that is, all the vertices have been explored and we halt the procedure.
	\end{itemize} 
	Then we set
	\begin{align*}
	\mathcal{U}^*_t\coloneqq 
	\begin{cases}
	\{j\in \mathcal{U}_{t-1}: u_{t} \sim j\},& \text{ if $|\mathcal{A}_{t-1}|\geq 1$}\\ 
	\{j\in \mathcal{U}_{t-1}\setminus \{u_{t}\}: u_{t} \sim j\}, & \text{ if $|\mathcal{A}_{t-1}|=0$},
	\end{cases}
	\end{align*}
	we change the status of the vertices in $\mathcal{U}^*_t$ to active and declare $u_{t}$ explored, so that $\mathcal{A}_{t}=(\mathcal{U}^*_t\cup \mathcal{A}_{t-1})\setminus \{u_{t}\}$, $\mathcal{E}_t=\mathcal{E}_{t-1}\cup\{u_{t}\}$ and $\mathcal{U}_t=[n]\setminus (\mathcal{A}_t\cup \mathcal{E}_t)$. 
	
	Observe that
	\begin{equation}\label{rec}
	|\mathcal{A}_t|=
	\begin{cases}
	|\mathcal{A}_{t-1}|+|\mathcal{U}^*_t|-1, \quad & \text{ if } |\mathcal{A}_{t-1}|\geq 1\\
	|\mathcal{U}^*_t|, \quad & \text{ if } |\mathcal{A}_{t-1}|=0.
	\end{cases}
	\end{equation}
	Set $t_0\coloneqq 0$ and denote by $(t_i:i\geq 1)$ the ordered times \textcolor{black}{(prior to $n$)} at which the set of active vertices becomes empty, so that $|\mathcal{A}_{t_j}|=0$ for each $j$. By \eqref{rec} we see that
	\begin{equation}\label{At}
	|\mathcal{A}_{t_{j-1}+t}|=1+\sum_{i=1}^t \big(|\mathcal{U}^*_{t_{j-1}+i}|-1\big), \hspace{0.2cm}1\leq t\leq t_j-t_{j-1}.
	\end{equation}
	Also, denoting by $\mathcal{C}_j$ the $j$-th explored component (so that $\mathcal{C}_1=\CC(V_n)$), we have that $|\mathcal{C}_j|=t_j-t_{j-1}$ for all $j$. 
	
	Therefore, thanks to the exploration process that we have just described, we can rewrite the probability of observing clusters of given sizes as the probability that the positive excursions of the random process $(|\mathcal{A}_t|)_t$ last for some specific number of steps.
	
	Our next goal is to construct \textit{mixed Poisson branching processes} for which the exploration of their (reduced) trees resembles that of clusters in the $NR_n(\textbf{w})$ random graph.
	
	Before starting, let us introduce a random variable $M$ with distribution
	\begin{equation}\label{M}
	\mathbb{P}(M=m)=\frac{w_m}{l_n} \hspace{0.2cm}(m\in [n]);
	\end{equation}
	we call the law of $M$ the \textit{mark distribution}, and in this context elements of $[n]$ are called \textit{marks}.
	
	The idea is to construct, sequentially, mixed Poisson branching processes and to explore \textit{thinned} versions of the their generated trees so that the exploration of different trees is comparable (in distribution) to the clusters exploration in the $NR_n(\textbf{w})$ random graph. 
	
	
	As it occurred for the clusters exploration of $NR_n(\textbf{w})$, also in this setting we distinguish between three alternatives procedures, which differ in the way we choose the mark of the root in the tree from which we start the exploration. As anticipated at the beginning of this section, the three procedures are called \textbf{Alg.1.BP}, \textbf{Alg.2.BP} and \textbf{Alg.3.BP}.
	
	During the exploration of these (reduced) branching process trees, we adopt the following notation. For each step $t\in \mathbb{N}_0$ of the procedure, we denote by $\mathcal{A}^{BP}_t$ the set of \textit{active marks} and by $\mathcal{E}^{BP}_t$ the set of \textit{explored marks} at the end of step $t$.
	
	We start \textcolor{black}{by constructing a (mixed) branching process} as follows. We assign to the root of the tree, call it $\rho$, a mark $J_0$, which is chosen in different ways according to the procedure employed. Specifically, in \textbf{Alg.1.BP} we let $J_0$ be a mark selected uniformly at random from the \textit{mark space} $[n]$; in \textbf{Alg.2.BP} instead, \textcolor{black}{we let $J_0=i$ with probability $w_i/l_n$, for $i\in [n]$}; finally, in \textbf{Alg.3.BP} we (deterministically) choose $J_0=1$. We give to $\rho$ a $\text{Poisson}(w_{J_0})$ number of children, say $Y_{\rho}$. Iteratively, to the $i$-th individual in generation $g\geq 1$ (if any) we assign a random mark $J^g_i$ distributed as $M$ in \eqref{M} and a $\text{Poisson}(w_{J^g_i})$ number of children, say $Y^g_i$. Marks are assigned independently, and they're also independent of the marks produced in previous generations; moreover, vertices produce offspring independently, so that the $Y^g_i$ are independent random variables. In particular, we see that the $Y^g_i$ are i.i.d. since the (random) marks are all distributed as $M$. However, note that in both settings where the mark $J_0$ is sampled uniformly at random from $[n]$ (i.e. when \textbf{Alg.1.BP} is used) and when $J_0$ is deterministically set equal to $1$ (i.e. when \textbf{Alg.3.BP} is employed), then $Y_{\rho}$ is not distributed as the $Y^g_i$, even though the random variables $Y_{\rho},Y^g_i$ are all independent.
	
	The exploration starts as follows. At time $t=0$, we declare $J_0$ active (whence $\mathcal{A}^{BP}_0=\{J_0\}$) and we set $\mathcal{E}^{BP}_0\coloneqq \emptyset$. For every $t\in \mathbb{N}$, we proceed as follows.
	\begin{itemize}
		\item [(a)] If $|\AAA^{BP}_{t-1}|\geq 1$ (i.e. if there is at least one active mark at the end of step $t-1$), we let $m^{BP}_{t}$ be the smallest element of $\AAA^{BP}_{t-1}$ and denote by $v_{t}$ the corresponding vertex. Note that $m^{BP}_1=J_0$ and $v_1=\rho$.
		\item [(b)] If $|\AAA^{BP}_{t-1}|=0$ and $\mathcal{E}^{BP}_{t-1}\neq [n]$ (so that there are still marks to be explored), \textcolor{black}{we start exploring} a \textit{new} mixed branching process tree defined as follows. We let $m^{BP}_{t}$ be a random mark selected from $[n]\setminus \mathcal{E}^{BP}_{t-1}$ (the set of unexplored marks at the end of step $t-1$) with probability $w_j/l'_n(t)$, where $l'_n(t)\coloneqq \sum_{i\in[n]\setminus\mathcal{E}_{t-1}^{BP}}w_i$, and we assign it to a vertex $v_t$ that constitutes the root of the new tree. \textcolor{black}{(Note that we have used the same notation for the sum of unexplored \textit{weights} and the sum of unexplored \textit{vertices} in the clusters exploration; however, this should not cause any confusion.)} Then we give to $v_t$ a $\text{Poisson}(w_{m^{BP}_{t}})$ number of children. 
		Iteratively, to the $i$-th individual in generation $g\geq 1$ (if any), we assign a random mark $J^g_i$ distributed as $M$ and a $\text{Poisson}(w_{J^g_i})$ number of children, say $Y^g_i$. 
		Marks are assigned independently to individuals in each generation, and they're also independent of the marks produced in previous generations; moreover, vertices produce offspring independently, 
		so that the $Y^g_i$ are independent random variables. 
		In particular we see that the $Y^g_i$ are i.i.d. since the (random) marks are all distributed as $M$, but the offspring of the root is not distributed like the $Y^g_i$ (even though it is independent of these random variables).
		\item [(c)] If $|\AAA^{BP}_{t-1}|=0$ and $\mathcal{E}^{BP}_{t-1}=[n]$, then all the marks are in status explored and we stop the procedure.
	\end{itemize}
	Denote by $J^{v_t}_1,\dots,J^{v_t}_{X_{v_t}}$ the marks of the children (if any) of vertex $v_t$, where $X_{v_t}$ denotes the number of children of $v_t$. Define $\mathcal{M}_t\coloneqq \{J^{v_t}_l:1\leq l\leq  X_{v_t}\}$, the collection of \textit{all} marks assigned to the children of $v_t$. Note that $|\mathcal{M}_t|= X_{v_t}$. 
	
	Moreover, we construct another set of marks $\widetilde{\mathcal{M}}^{}_t\subset \mathcal{M}_t$ as follows. If $X_{v_t}=0$ then we simply set $\widetilde{\mathcal{M}}^{}_t=\emptyset$, otherwise we define:
	\begin{itemize}
		\item $\mathcal{L}^{v_t}_0\coloneqq \emptyset$;
		\item $\mathcal{L}^{v_t}_l\coloneqq (J^{v_t}_1,\dots,J^{v_t}_l)$ for $1\leq l\leq X_{v_t}$,
	\end{itemize}
	and let (for $1\leq l\leq X_{v_t}$)
	\begin{equation}\label{assignment}
	J^{v_t}_l\in \widetilde{\mathcal{M}}^{}_t \Leftrightarrow J^{v_t}_l\notin (\mathcal{A}^{BP}_{t-1}\cup \{m^{BP}_t\})\cup \mathcal{E}^{BP}_{t-1}\cup \mathcal{L}^{v_t}_{l-1}.
	\end{equation}
	In words, the $l$-th mark $J^{v_t}_l$ is added to the set $\widetilde{\mathcal{M}}_t$ if, and only if, it did not appear at a previous step $i\leq t-1$ and it differs from its \textquotedblleft sister marks\textquotedblright $J^{v_t}_1,\dots,J^{v_t}_{l-1}$ (if any).
	
	Note that, if $|\AAA^{BP}_{t-1}|\geq 1$, then $\mathcal{A}^{BP}_{t-1}\cup \{m^{BP}_t\}=\mathcal{A}^{BP}_{t-1}$ as $m^{BP}_t$ is taken from $\mathcal{A}^{BP}_{t-1}$. On the other hand, if $|\mathcal{A}^{BP}_{t-1}|=0$, then according to \eqref{assignment} we (rightfully) do not include within $\widetilde{\mathcal{M}}^{}_t$ those marks assigned to the children of $v_t$ which are equal to $m^{BP}_t$, the mark of (their parent) $v_t$.
	
	By our construction, $\mathcal{M}^{}_t$ is the set of \textit{all} marks assigned to the children of $v_t$, while $\widetilde{\mathcal{M}}^{}_t$ is the set of \textit{distinct} marks of these offspring which also differ from all the marks that we have seen up to the end of step $t-1$.
	
	We declare active all marks in the set $\widetilde{\mathcal{M}}^{}_t$ and we eliminate from the tree all sub-trees rooted at those children of $v_t$ whose marks have \textit{not} been inserted into $\widetilde{\mathcal{M}}_t$. We conclude step $t$ by declaring explored the mark $m^{BP}_t$. Therefore we update $\mathcal{A}^{BP}_t=(\widetilde{\mathcal{M}}^{}_t\cup \mathcal{A}^{BP}_{t-1})\setminus \{m^{BP}_t\}$ and $\mathcal{E}^{BP}_t=\mathcal{E}^{BP}_{t-1}\cup \{m^{BP}_t\}$.
	Note that at each step we explore precisely \textit{one} mark. 
	
	Set $\tau_0\coloneqq 0$ and denote by $(\tau_i:i\geq 1)$ the ordered times (prior to the termination of the procedure that we have just described) at which the set of active marks becomes empty, so that $|\mathcal{A}^{BP}_{\tau_j}|=0$ for all $j$. Observe that
	\begin{equation}\label{rec2}
	|\mathcal{A}^{BP}_t|=
	\begin{cases}
	|\mathcal{A}^{BP}_{t-1}|+|\widetilde{\mathcal{M}}_t|-1, \quad & \text{ if } |\mathcal{A}^{BP}_{t-1}|\geq 1\\
	|\widetilde{\mathcal{M}}_t|, \quad & \text{ if } |\mathcal{A}^{BP}_{t-1}|=0.
	\end{cases}
	\end{equation}
	\textcolor{black}{By \eqref{rec2} we see that
		\begin{equation}
		|\mathcal{A}^{BP}_{\tau_{j-1}+t}|=1+\sum_{i=1}^{t}\big(|\widetilde{\mathcal{M}}^{}_{\tau_{j-1}+i}|-1\big), \hspace{0.2cm}1\leq t< \tau_j-\tau_{j-1}.
		\end{equation}}
	
	The following proposition establishes the connection between the clusters exploration in $NR_n(\textbf{w})$ and the exploration of the (reduced) branching process trees that we have just described. (Recall that $\mathcal{C}_j$ denote the $j$-th explored component in $NR_n(\textbf{w})$.)
	\textcolor{black}{\begin{prop}\label{newnewprop}
			For $i\in \{1,2,3\}$In distribution we have that $|\mathcal{C}_j|=\tau_j-\tau_{j-1}$ for all $j$, provided we use \textbf{Alg.i} and \textbf{Alg.i.BP} to explore $NR_n(\textbf{w})$ and the branching process tress, respectively.
	\end{prop}}
	The interested reader can find a proof of Proposition \ref{newnewprop} in the appendix at the end of the paper. We conclude this section with a few remarks concerning some of the quantities that we have just introduced.
	
	Let $W_n$ be a random variable with distribution function
	\begin{equation}\label{fn}
	F_n(x)\coloneqq \frac{1}{n}\sum_{i=1}^{n}\mathbbm{1}_{\{w_i\leq x\}}.
	\end{equation}
	Observe that, when $J_0$ is selected uniformly at random from $[n]$, then
	\[\mathbb{P}(w_{J_0}\leq x)=\sum_{i=1}^{n}\mathbbm{1}_{\{w_i\leq x\}}\mathbb{P}(J_0=i)=F_n(x),\]
	so that $w_{J_0}\overset{d}{=}W_n$.	Next we show that $w_M$ has the same law as the \textit{size-biased distribution} of $W_n$ (where $M$ is specified in (\ref{M})).
	\begin{defn}
		For a non-negative random variable $X$ with $\mathbb{E}(X)\in (0,\infty)$, define $X^*$ through
		\begin{equation}\label{sizebiased}
		\mathbb{P}(X^*\leq x)\coloneqq \frac{\mathbb{E}\big(X\mathbbm{1}_{\{X\leq x\}}\big)}{\mathbb{E}(X)}.
		\end{equation}
		We call $X^*$ the size-biased distribution of $X$.
	\end{defn}	
	Observe that, since $w_{J_0}\overset{d}{=}W_n$,
	\begin{equation*}
	\mathbb{E}\big(W_n\mathbbm{1}_{\{W_n\leq x\}}\big)=\mathbb{E}\big(w_{J_0}\mathbbm{1}_{\{w_{J_0}\leq x\}}\big)=\frac{1}{n}\sum_{i=1}^{n}w_i\mathbbm{1}_{\{w_i\leq x\}}.
	\end{equation*}
	Also, $\mathbb{E}(W_n)=\mathbb{E}(w_{J_0})=l_n/n$, and therefore 
	\begin{equation*}
	\mathbb{P}\left(W^*_n\leq x\right)=\frac{\mathbb{E}\big(W_n\mathbbm{1}_{\{W_n\leq x\}}\big)}{\mathbb{E}(W_n)}=\frac{1}{l_n}\sum_{i=1}^{n}w_i\mathbbm{1}_{\{w_i\leq x\}}
	=\sum_{i=1}^{n}\mathbbm{1}_{\{w_i\leq x\}}\mathbb{P}(M=i)=\mathbb{P}(w_M\leq x).
	\end{equation*}
	Thus $W^*_n\overset{d}{=}w_M$. 
	\begin{obs}\label{remcritparameter}
		Let us briefly explain why the parameter $\nu$ defined in (\ref{nu}) is the one characterizing the phase transition in the $NR_n(\textbf{w})$ random graph. 
		Let $W$ be a random variable with distribution function $F$, and suppose that $F$ satisfies either (\ref{six}) or (\ref{five}). Then $\E[W^2]<\infty$ and, by dominated convergence, it is possible to show that (as $n\rightarrow \infty$) $n^{-1}\sum_{i\in [n]}^{}w^2_i\rightarrow \E\left[W^2\right]$. Also, $n^{-1}\sum_{i\in [n]}^{}w_i\rightarrow \E(W)$ (so that in particular $l_n=\Theta(n)$) and therefore
		\[\nu_n\coloneqq \frac{\sum_{i\in [n]{}}^{}w^2_i}{l_n}=\frac{\sum_{i\in [n]{}}^{}w^2_i}{\sum_{i\in [n]}^{}w_i}\longrightarrow \frac{\E(W^2)}{\E(W)}=\nu.\]
		\textcolor{black}{Since the mean offspring distribution is $\nu_n=\mathbb{E}(w_M)=\E(W^*_n)$, we conclude that $\E(W^*_n)$ converges to $\nu$ as $n\rightarrow \infty$; that is, $\nu$ is the asymptotic mean offspring distribution of the branching processes whose trees have been used to \textit{approximate}} the clusters exploration in $NR_n(\textbf{w})$. The idea is that the \textit{reduced} trees, whose exploration is equivalent in distribution to the clusters exploration in the $NR_n(\textbf{w})$ model, are sufficiently \textit{close} to the original trees, so that their mean offspring distribution is roughly $\nu_n$. In turns, this tells us that the average number of newly discovered vertices at each step of the exploration in $NR_n(\textbf{w})$ is approximately $\nu_n\sim \nu$. Therefore, the random graph is expected to be critical precisely when $\nu=1$.
	\end{obs}
	
	\begin{obs}
		We also remark that, when the exponent $\tau$ characterising the power-law behaviour of the distribution function $F$ (which specifies the vertex weights) is such that $\tau \in (4,\infty)$ then, denoting by $W$ a random variable with distribution function $F$, we have $\mathbb{E}(W^3)<\infty$. On the other hand, if $\tau\in (3,4)$ then $\mathbb{E}(W^3)$ is not finite.
	\end{obs}

	\section{Proofs}
	In this section we are going to prove Theorems \ref{teo1taugreater4}, \ref{teo2tauin34} and \ref{teoexp}. Before proving these results, however, we list some useful facts in the next subsection.
	
	\subsection{Preliminaries}\label{subprel}
	The proofs of the next few results are postponed to Subsection \ref{subsectionauxiliary}. We start by establishing a simple lemma, which gives us information concerning the order of growth of the vertex weights.
	\textcolor{black}{\begin{lem}\label{maxw}
			Let $\tau>3$ and $c_F>0$. If (\ref{COND}) holds, then $w_i=\left(n c_F/i\right)^{1/(\tau-1)}$ for $1\leq i\leq n-1$. If $1-F(x)\leq c_Fx^{-(\tau-1)}$ for all $x\geq 0$, then
			$w_i\leq\left(n c_F/i\right)^{1/(\tau-1)}$; in particular, 
			$w_1=O\left(n^{1/(\tau-1)}\right)$.
	\end{lem}}
	The next result provides bounds on $|\nu_n-1|$, the distance between the mean offspring distribution of the branching processes that we use to approximate the clusters exploration in $NR_n(\textbf{w})$ and the critical value $\nu=1$. In particular, the next result quantifies the rate of convergence of $\nu_n$ to $1$ and it also provides information concerning the second moment of $W^*_n$ (the size-biased distribution of $W_n$).
	\begin{prop}\label{ordersize}
		Let $W$ be a random variable with distribution $F$, let $W_n$ be a random variable with distribution $F_n$ as in \eqref{fn}, and let $W_n^*$ be its size biased distribution. Suppose that $1-F(x)\leq c_F x^{-(\tau-1)}$ for all $x\geq 0$, where $\tau >3$ and $c_F$ is a positive constant. Then, for all large enough $n$, we have that
		\begin{equation}\label{nun1}
		|\nu_n-1|\leq C_1n^{-\frac{\tau-3}{\tau-1}}
		\end{equation}
		for some finite constant $C_1>0$ which depends on $c_F$ and $\tau$. In addition, if $\tau>4$ then, for all large enough $n$, we have that
		\begin{equation}\label{Ewnstart}
		\bigg|\E((W_n^*)^2)-\frac{\E(W^3)}{\E(W)}\bigg|\leq C_2n^{-\frac{\tau-4}{\tau-1}}
		\end{equation}
		for some finite constant $C_2>0$ which depends on $c_F$ and $\tau$.
	\end{prop}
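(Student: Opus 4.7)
The common engine behind both bounds is a Riemann-sum estimate comparing $n^{-1}\sum_{i=1}^{n}w_{i}^{k}$ with $\E[W^{k}]=\int_{0}^{1}([1-F]^{-1}(u))^{k}\,du$. Because $[1-F]^{-1}$ is non-increasing, the only appreciable discrepancy between the right-endpoint Riemann sum $n^{-1}\sum_{i=1}^{n}([1-F]^{-1}(i/n))^{k}$ and the integral comes from the behaviour near $u=0$, where the integrand may blow up. The tail hypothesis $1-F(x)\le c_{F}x^{-(\tau-1)}$ translates, via the definition (\ref{F}) of the generalised inverse, into the quantile bound $[1-F]^{-1}(u)\le (c_{F}/u)^{1/(\tau-1)}$, which quantifies that blow-up.

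Concretely, first I would prove the auxiliary estimate
\begin{equation*}
\Big|\frac{1}{n}\sum_{i=1}^{n}w_{i}^{k}-\E[W^{k}]\Big|\le D_{k}\,n^{-(\tau-1-k)/(\tau-1)}
\end{equation*}
for every integer $k$ with $1\le k<\tau-1$. By monotonicity of $[1-F]^{-1}$, the integral $\int_{i/n}^{(i+1)/n}([1-F]^{-1}(u))^{k}\,du$ lies between $w_{i+1}^{k}/n$ and $w_{i}^{k}/n$, so that summing over $i$ yields
\begin{equation*}
\E[W^{k}]-\int_{0}^{1/n}([1-F]^{-1}(u))^{k}\,du\le \frac{1}{n}\sum_{i=1}^{n}w_{i}^{k}\le \E[W^{k}]+\frac{w_{1}^{k}}{n}.
\end{equation*}
Both error terms are then controlled by the quantile bound: directly, $w_{1}^{k}/n\le c_{F}^{k/(\tau-1)}n^{-(\tau-1-k)/(\tau-1)}$, and integrating $(c_{F}/u)^{k/(\tau-1)}$ over $(0,1/n)$ gives the same order (the assumption $k<\tau-1$ is exactly what guarantees integrability).

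For inequality (\ref{nun1}), I would invoke the standing critical hypothesis $\nu=\E[W^{2}]/\E[W]=1$ to rewrite
\begin{equation*}
\nu_{n}-1=\frac{(n^{-1}\sum w_{i}^{2}-\E[W^{2}])-(n^{-1}\sum w_{i}-\E[W])}{n^{-1}\sum w_{i}}.
\end{equation*}
Applying the auxiliary estimate with $k=1$ and $k=2$ (both admissible under $\tau>3$), observing that the $k=2$ contribution is the slower of the two and therefore dominates, and using that $n^{-1}\sum w_{i}\to \E[W]>0$, one obtains (\ref{nun1}). For inequality (\ref{Ewnstart}), the size-biasing identity (\ref{sizebiased}) gives
\begin{equation*}
\E[(W_{n}^{*})^{2}]=\frac{\E[W_{n}^{3}]}{\E[W_{n}]}=\frac{n^{-1}\sum w_{i}^{3}}{n^{-1}\sum w_{i}},
\end{equation*}
and the auxiliary estimate with $k=3$ (now requiring $\tau>4$ so that $\E[W^{3}]<\infty$ and the quantile bound is integrable) combined with the standard ratio manipulation $a/b-a'/b'=(ab'-a'b)/(bb')$ produces the advertised rate $n^{-(\tau-4)/(\tau-1)}$.

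The only delicate step is the auxiliary estimate itself: the right-endpoint Riemann sum under-approximates the integral everywhere except on the leftmost interval $(0,1/n)$, and on that single interval the quantile bound must be applied with care. The threshold $k<\tau-1$ is sharp within the argument, since it is precisely what keeps $\int_{0}^{1/n}u^{-k/(\tau-1)}\,du$ finite and of the claimed order.
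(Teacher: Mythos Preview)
Your argument is correct, and the route is genuinely different from the paper's. The paper does not work in the quantile variable at all: instead it quotes an auxiliary result (Lemma~A.1 of \cite{Hofstadt}) which, for differentiable $h$ with $h(0)=0$, gives
\[
|\E[h(W_n)]-\E[h(W)]|\le \int_a^\infty |h'(x)|[1-F(x)]\,dx+\frac{1}{n}\int_0^a|h'(x)|\,dx
\]
for every $a>0$, and then specialises to $h(x)=x,x^2,x^3$ with $a=(c_Fn)^{1/(\tau-1)}$, obtaining exactly the rates $n^{-(\tau-1-k)/(\tau-1)}$ you find. So the two proofs reach the same moment bounds through dual integral representations: the paper uses the layer-cake formula $\E[h(W)]=\int_0^\infty h'(x)[1-F(x)]\,dx$ and compares tail functions, whereas you use the quantile formula $\E[W^k]=\int_0^1([1-F]^{-1}(u))^k\,du$ and compare a Riemann sum with its integral. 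Your version is more self-contained (no imported lemma) and makes the role of the specific construction $w_i=[1-F]^{-1}(i/n)$ very transparent; the paper's version, via Lemma~A.1, is phrased so that it would apply to any $W_n$ with $\|F_n-F\|_\infty\le 1/n$, not only the deterministic quantile choice. Once the moment comparisons are in hand, both proofs finish identically: write $\nu_n-1$ and $\E[(W_n^*)^2]-\E[W^3]/\E[W]$ as ratios of the $\E[W_n^k]$, use $\nu=1$, and let the $k=2$ (respectively $k=3$) error dominate.
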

	Next we introduce a stochastic domination result (the counterpart of Lemma 5 in \cite{NP} in this inhomogeneous setting) which involves a random walk that later on we will use to dominate the process arising from the exploration of the reduced trees generated by the mixed Poisson branching processes of Section 4.
	
	Let $(\Upsilon_i)_{i\geq 1}$ be a sequence of independent random variables, such that each $\Upsilon_i$ has a mixed Poisson distribution with random parameter $w_{M_i}$, where $(M_i)_{i\geq 1}$ is a sequence of independent random variables, all distributed as $M$ in \eqref{M}, with $w_i$ as in \eqref{wj}. Set $S_0:=1$ and define, for $t\in \mathbb{N}_0$,
	\begin{equation}\label{st}
	S_t=1+\sum_{i=1}^{t}(\Upsilon_i-1).    
	\end{equation}
	Given any $H\in \mathbb{N}$, we set
	\begin{equation}\label{gamma}
	\gamma \coloneqq  \inf\{t\geq 1:S_t=0\text{ or }S_t\geq H\}.
	\end{equation}
	The next result, which is the counterpart of Lemma 5 in \cite{NP}, states that the (conditional) law of the overshoot $S_{\gamma}-H$, given $S_{\gamma}\geq H$, is stochastically dominated by the $\text{Poisson}(w_1)$ distribution.
	\begin{lem}\label{OurLemma6}
		Let $H\in \mathbb{N}$, and let $S_t$ and $\gamma$ be as above. Let $Y_{w_1}$ be a Poisson random variable with mean $w_1$, and let $\Sigma \subset \mathbb{N}$ be a set of positive integers. Then, for any $k\geq 1$, we have that
		\begin{equation*}
		\mathbb{P}(S_{\gamma}-H\geq k|S_{\gamma}\geq H, \gamma \in \Sigma)\leq 
		\mathbb{P}(Y_{w_1}\geq k).
		\end{equation*}
	\end{lem}
	The following corollary is straightforward.
	\begin{cor}\label{OurCorollary6}
		If $Y_{w_1}$ is a Poisson random variable with mean $w_1$ and $f$ is an increasing real function, then with the notation of the previous lemma we have
		\begin{equation*}
		\mathbb{E}(f(S_{\gamma}-H)|S_{\gamma}\geq H, \gamma \in \Sigma)\leq \mathbb{E}(f(Y_{w_1})).
		\end{equation*}
	\end{cor}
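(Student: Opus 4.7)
The plan is to observe that this is the classical restatement of stochastic domination in terms of expectations of monotone functions, applied to the domination already established in Lemma \ref{OurLemma6}. Concretely, both the conditional variable $(S_{\gamma}-H \mid S_{\gamma}\geq H)$ and $Y_{w_1}$ take values in $\mathbb{N}_0$ (the overshoot is non-negative because of the conditioning $S_{\gamma}\geq H$, and $S_t$ is integer-valued since the $\Upsilon_i$ are), so I would use the Abel-type summation formula
\[
\mathbb{E}(f(X)) \;=\; f(0) \;+\; \sum_{k\geq 1}\bigl(f(k)-f(k-1)\bigr)\,\mathbb{P}(X\geq k),
\]
valid for any $\mathbb{N}_0$-valued random variable $X$ and any increasing function $f$ (up to finiteness of the sums, in which case the inequality is trivial). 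Since $f$ is increasing, each coefficient $f(k)-f(k-1)$ is non-negative, so applying this identity first with $X=(S_{\gamma}-H\mid S_{\gamma}\geq H)$ and then with $X=Y_{w_1}$ and comparing term by term via Lemma \ref{OurLemma6} gives exactly the claimed inequality.

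Alternatively, and perhaps more transparently, I would invoke the standard coupling characterization of stochastic domination: Lemma \ref{OurLemma6} asserts $(S_{\gamma}-H\mid S_{\gamma}\geq H)\preccurlyeq Y_{w_1}$, and by Strassen's theorem one may realize both variables on a common probability space so that $X\leq Y$ pointwise, whence $f(X)\leq f(Y)$ pointwise (by monotonicity of $f$) and the inequality follows by taking expectations. There is no real obstacle here — the corollary is a one-line consequence of Lemma \ref{OurLemma6} together with the elementary fact that increasing functions preserve stochastic order; the only thing to keep track of is that the conditioning event $\{S_{\gamma}\geq H\}$ has positive probability (so the conditional expectation is well-defined) in the regime where the corollary will be applied.
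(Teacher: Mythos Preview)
Your proposal is correct and aligns with the paper's treatment: the paper does not give a proof at all, simply declaring the corollary ``straightforward,'' and your argument (via either the Abel summation identity or the Strassen coupling) is precisely the standard one-line justification that stochastic domination implies the expectation inequality for increasing functions.
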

	We conclude by recalling a basic result, the Optional Stopping Theorem, which we repeatedly use in the sequel. Its proof can be found in any advanced probability textbook.
	\begin{thm}\label{OptStopTh}
		Let $(X_i)_{i\in \mathbb{N}_0}$ be a martingale and let $\tau_1,\tau_2$ be stopping times with $0\leq \tau_1\leq \tau_2$. Suppose that $\tau_2$ is bounded. Then
		\begin{equation}\label{OST}
		\E(X_{\tau_1})=\E(X_{\tau_2}).
		\end{equation}
		If $(X_i)_{i\in \mathbb{N}_0}$ is a submartingale then \eqref{OST} has to be changed with $\E(X_{\tau_1})\leq\E(X_{\tau_2})$; if $(X_i)_{i\in \mathbb{N}_0}$ is a supermartingale, then \eqref{OST} has to be changed with $\E(X_{\tau_1})\geq\E(X_{\tau_2})$.
	\end{thm}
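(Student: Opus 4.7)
The plan is to prove the equality $\E(X_{\tau_1})=\E(X_{\tau_2})$ (the evident intended reading of (\ref{OST})) by expressing the difference as a telescoping sum of predictable increments of the martingale and invoking the one-step martingale property term by term. First, I would fix a filtration $(\mathcal{F}_k)_{k\geq 0}$ with respect to which $(X_k)$ is a martingale, and a deterministic constant $N$ with $\tau_2\leq N$ almost surely, as afforded by the boundedness hypothesis. Then one has the identity
\begin{equation*}
X_{\tau_2}-X_{\tau_1}=\sum_{k=1}^{N}(X_k-X_{k-1})\mathbbm{1}_{\{\tau_1<k\leq \tau_2\}}.
\end{equation*}
Since the sum has only $N$ terms and each $X_k$ is integrable (a standing hypothesis on a martingale), taking expectations and interchanging with the sum is legitimate.

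The decisive observation is that each event $\{\tau_1<k\leq \tau_2\}=\{\tau_1\leq k-1\}\cap\{\tau_2\leq k-1\}^{c}$ is $\mathcal{F}_{k-1}$-measurable, since $\tau_1$ and $\tau_2$ are stopping times. By the tower property together with the martingale relation $\E(X_k-X_{k-1}\mid \mathcal{F}_{k-1})=0$,
\begin{equation*}
\E\bigl[(X_k-X_{k-1})\mathbbm{1}_{\{\tau_1<k\leq \tau_2\}}\bigr]=\E\bigl[\mathbbm{1}_{\{\tau_1<k\leq \tau_2\}}\,\E(X_k-X_{k-1}\mid \mathcal{F}_{k-1})\bigr]=0,
\end{equation*}
and summing over $1\leq k\leq N$ yields the desired identity.

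For the submartingale case the same decomposition is used, but now $\E(X_k-X_{k-1}\mid \mathcal{F}_{k-1})\geq 0$; since the indicator is non-negative each summand has non-negative expectation, giving $\E(X_{\tau_1})\leq \E(X_{\tau_2})$, and the supermartingale case is entirely symmetric. I do not expect any genuine obstacle here: because $\tau_2$ is bounded the argument avoids the uniform-integrability considerations needed for the unbounded version of the theorem, and the whole proof reduces to the single observation that the indicators of $\{\tau_1<k\leq \tau_2\}$ are predictable.
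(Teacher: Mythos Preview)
Your argument is correct and is precisely the standard textbook proof via the predictable indicator $\mathbbm{1}_{\{\tau_1<k\leq \tau_2\}}$; the paper itself does not give a proof of this theorem, stating only that it ``can be found in any advanced probability textbook,'' so there is nothing to compare against.
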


	\subsection{Proof of Theorems \ref{teo1taugreater4}, \ref{teo2tauin34} and \ref{teoexp} -- the probability of large maximal components}
	To prove the results of this subsection, we explore clusters and branching process trees by means of \textbf{Alg.1} and \textbf{Alg.1.BP} respectively; that is, we sample $V_n$ and $J_0$ uniformly at random from $[n]$.
	
	The upper bounds for the probabilities of observing maximal components containing more than $An^{2/3}$ and $An^{\frac{\tau-2}{\tau-1}}$ vertices stated in Theorems \ref{teo1taugreater4} and \ref{teo2tauin34}, respectively, are proved through Lemmas \ref{upperbound} and \ref{lem45} below, which are unaffected by the specific value of $\tau$ (the exponent characterising the power law decay of the distribution function $F$ which specifies the vertex weights through \eqref{wj}).
	
	Specifically, with the first lemma we obtain an upper bound for the probability that $|\mathcal{C}(V_n)|$ is larger than $k$, and then we use the second lemma to control an expected value which appears in our upper bound for
	\begin{equation}\label{probunifvertex}
	\mathbb{P}(|\mathcal{C}(V_n)|>k).
	\end{equation}
	The upper bounds for the probabilities involving $|\mathcal{C}_{\max}|$ will be deduced from our upper bounds on \eqref{probunifvertex} by means of a standard argument, which consists in bounding the probability that $|\CC_{\max}|>k$ by the probability that there are more than $k$ vertices lying in components containing at least $k$ nodes, and then using Markov's inequality to bound the latter probability.
	
	As a first step toward obtaining an upper bound for \eqref{probunifvertex} we show how such probability can be bounded from above by the probability that a random walk stays positive for $k$ steps.
	
	To this end note that, recalling the algorithmic procedure \textbf{Alg.1.BP} to explore the branching process trees of Section 4, we have $\widetilde{\mathcal{M}}_t\subset \mathcal{M}_t$ (because $\mathcal{M}_t$ is formed by \textit{all} the marks of the children of $v_t$, whereas $\widetilde{\mathcal{M}}^{}_t$ only contains those marks which did not appear at earlier steps). This implies that $|\widetilde{\mathcal{M}}^{}_t|\leq |\mathcal{M}^{}_t|$ for all $t$ and hence in particular
	\begin{multline}\label{asa}
	\mathbb{P}(|\mathcal{C}(V_n)|> k)=\mathbb{P}\Big(1+\sum_{i=1}^{t}\big(|\widetilde{\mathcal{M}}^{}_i|-1\big)>0\hspace{0.15cm}\forall t\in [k]\Big)\\
	\leq \mathbb{P}\Big(1+\sum_{i=1}^{t}\big(|\mathcal{M}^{}_i|-1\big)>0\hspace{0.15cm}\forall  t\in [k]\Big).
	\end{multline}
	
	Recall that the $\left|\mathcal{M}^{}_i\right|=X_{v_i}$ 
	are  independent random variables but they are not identically distributed. Indeed, $|\mathcal{M}_1|$ has a $\text{Poisson}(w_{J_0})$ distribution, with $J_0$ uniformly distributed on $[n]$, whereas $|\mathcal{M}_i|$ (for $2\leq i\leq k$), on the event appearing in \eqref{asa}, has a mixed Poisson distribution with random parameter $w_{M_i}$, 
	where the marks $M_i$ ($i\geq 2$) are independent identically distributed random variables with distribution as $M$ in \eqref{M}. 

	Thus, in order to obtain an upper bound for \eqref{asa} involving a sequence of i.i.d. random variables, we 
	need to substitute $|\mathcal{M}^{}_1|$ with an independent mixed Poisson random variable with random parameter $w_{M_1}$, where $M_1$ is distributed as $M$ and is independent of $(M_i)_{i\geq 2}$.
	
	To achieve this, let's recall that if $Z$ is any random variable and $f,g$ are arbitrary increasing functions, then $\E(f(Z)g(Z))\geq \E(f(Z))\E(g(Z))$, see for instance Lemma 2.21 in \cite{Ross2}. Therefore, if $Z$ is a non-negative random variable with finite positive mean (so that we can define its size-biased distribution), taking the increasing functions $f(z)\coloneqq z$ and $g(z)\coloneqq \mathbbm{1}_{\{z> x\}}$, we obtain $\E\big(Z\mathbbm{1}_{\{Z> x\}}\big)\geq \E(Z)\mathbb{P}(Z>x)$, that is
	\[\mathbb{P}(Z^*>x)=\frac{\E\big(Z\mathbbm{1}_{\{Z> x\}}\big)}{\E(Z)}\geq \mathbb{P}(Z>x).\]
	Therefore $Z\preccurlyeq Z^*$, i.e. the random variable $Z^*$ stochastically dominates $Z$. Consequently, since $w_{J_0}\overset{d}{=}W_n$ and $w_M\overset{d}{=}W^*_n$ (recall the discussion before Remark \ref{remcritparameter}), then $W_n\preccurlyeq W^*_n$, and we obtain that the random variable $\left|\mathcal{M}^{}_1\right|$, which has the $\text{Poisson}(w_{J_0})$ distribution, is stochastically dominated by a mixed Poisson random variable with random parameter $w_{M_1}$.
	
	Now let $(\Upsilon_i)_{i\geq 1}$ be a sequence of independent random variables where \textit{each} $\Upsilon_i$ has a mixed Poisson distribution with random parameter $w_{M_i}$, with $(M_i)_{i\geq 1}$ an i.i.d. sequence of random variables all distributed as $M$ in \eqref{M}. From the previous paragraph we know that $\left|\mathcal{M}^{}_1\right|\preccurlyeq \Upsilon_1$, 
	while $\left|\mathcal{M}^{}_i\right|$ and $ \Upsilon_i$ are equal in distribution for $i\geq 2$. 
	Now consider the process defined in \eqref{st}, where $S_0\coloneqq 1$ and $S_i\coloneqq S_{i-1}+\Upsilon_i-1$ for $i\geq 1$, so that $S_t=1+\sum_{i=1}^{t}(\Upsilon_i-1)$, $t\in \mathbb{N}_0$. Since $(|\mathcal{M}_i|)_i$ and $(\Upsilon_i)_i$ are sequence of independent random variables we obtain
	\begin{equation}\label{imp}
	\mathbb{P}\Big(1+\sum_{i=1}^{t}\big(|\mathcal{M}^{}_i|-1\big)>0\hspace{0.15cm}\forall  t\in [k]\Big)
	\leq \mathbb{P}\big(S_t>0\hspace{0.15cm}\forall \text{  }t\in [k]\big).
	\end{equation}

	
	It follows from \eqref{asa} and \eqref{imp} that, in order to obtain an upper bound for $\mathbb{P}(|\CC(V_n)|\geq k)$ which, as we said earlier, we subsequently use to derive our upper bounds for the probabilities of observing unusually large maximal clusters in both regimes $\tau\in (3,4)$ and $\tau>4$, we need to estimate the probability on the right-hand side of \eqref{imp}, i.e. the probability that a $\mathbb{Z}$-valued random walk stays positive for $k\in \mathbb{N}$ steps.
	
	\begin{lem}\label{upperbound}
		Let $k\geq 1$, and let $H,H'$ be positive integers with $H'\geq k$. Consider $S_t$ as in (\ref{st}) and define
		\begin{equation*}
		\gamma \coloneqq \left\{ \begin{aligned}
		& \inf\{t< H':S_t=0\text{ or }S_t\geq H\} &&  \text{ if }\{t< H':S_t=0\text{ or }S_t\geq H\}\neq \emptyset,\\
		& H' && \text{ if }\{t< H':S_t=0\text{ or }S_t\geq H\}=\emptyset.\\
		\end{aligned}
		\right.
		\end{equation*}
		Then, setting $\gamma^*\coloneqq \gamma \wedge k$, we have 
		\begin{equation}\label{mainboundforprobwelike}
		\Prob(|\CC(V_n)|>k)\leq\Prob(S_{\gamma^*}>0)\leq \frac{1-(1-\nu_n)\mathbb{E}(\gamma)}{H}+\frac{\mathbb{E}(\gamma)}{k}.
		\end{equation}
	\end{lem}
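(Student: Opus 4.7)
The plan is to start from the upper bound $\Prob(|C(V_n)|>k)\leq \Prob(S_t>0 \text{ for all } 0\leq t\leq k)$ provided by (\ref{imp}), and to control the right-hand side by two Markov-type inequalities: one for how large $S$ can be at time $\gamma$, the other for how likely $\gamma$ is to exceed $k$. These two contributions will correspond exactly to the two summands on the right of (\ref{mainboundforprobwelike}).

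First I would exhibit the natural martingale associated with the random walk $S_t$. Since the $\Upsilon_i$ are i.i.d.\ with common mean $\E(W_n^*)=\nu_n$, the centred process $M_t:=S_t-(\nu_n-1)t$ is a martingale with $\E(M_t)=M_0=1$. Because $\gamma\leq H'$ is bounded, Theorem \ref{OptStopTh} applied to $M_t$ at $\gamma$ yields
\[
\E(S_\gamma)\;=\;1+(\nu_n-1)\E(\gamma)\;=\;1-(1-\nu_n)\E(\gamma).
\]
Second, I would decompose the event of interest according to whether $\gamma\leq k$ or $\gamma>k$. On the former sub-event each of $S_0,\dots,S_\gamma$ is strictly positive, so the definition of $\gamma$ forces $S_\gamma\geq H$; hence
\[
\bigl\{S_t>0 \text{ for all } 0\leq t\leq k\bigr\} \;\subseteq\; \{S_\gamma\geq H\}\cup\{\gamma>k\}.
\]

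To conclude, I would apply Markov's inequality to both terms. Since $S_t$ decreases by at most one per step (because $\Upsilon_i\geq 0$) and is stopped as soon as it reaches $0$, we have $S_\gamma\geq 0$, so $\Prob(S_\gamma\geq H)\leq \E(S_\gamma)/H$; combined with the value of $\E(S_\gamma)$ from the first step, this gives the first summand in (\ref{mainboundforprobwelike}). Similarly, Markov applied to the non-negative random variable $\gamma$ gives $\Prob(\gamma>k)\leq \E(\gamma)/k$, which produces the second summand.

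The argument is a direct adaptation of the Nachmias--Peres martingale method to the inhomogeneous setting, so I do not expect a substantial obstacle. The two minor points requiring care are the fact that the $\Upsilon_i$ are genuinely i.i.d.\ (so that $M_t$ is a martingale), which is guaranteed by the construction in Section 4 where $\Upsilon_1$ was deliberately taken to be mixed Poisson with parameter $w_M$ just like the $\Upsilon_i$ for $i\geq 2$, and the verification that $S_\gamma\geq 0$, without which Markov's inequality could not be invoked on the first term.
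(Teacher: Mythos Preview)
Your approach is exactly the paper's: the same martingale $S_t+t(1-\nu_n)$, the same optional stopping computation of $\E(S_\gamma)$, the same two Markov bounds, and the same verification that $S_\gamma\geq 0$. There is one small slip in your decomposition, however. You split according to $\{\gamma\leq k\}$ versus $\{\gamma>k\}$ and then assert that on $\{\gamma\leq k\}\cap\{S_t>0\ \forall t\leq k\}$ ``the definition of $\gamma$ forces $S_\gamma\geq H$''. That implication uses the fact that $\gamma$ is the hitting time of $\{0\}\cup[H,\infty)$, which is only guaranteed when $\gamma<H'$; if $H'=k$ and $\gamma=H'$, then $\gamma\leq k$ holds but $S_\gamma$ need not be $\geq H$ (it can lie in $(0,H)$). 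The paper avoids this edge case by splitting instead at $\gamma<k$ versus $\gamma\geq k$: since $H'\geq k$, the strict inequality $\gamma<k$ forces $\gamma<H'$, and then your argument goes through verbatim, while Markov still gives $\Prob(\gamma\geq k)\leq \E(\gamma)/k$. With that one-character change your proof is complete and matches the paper.
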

	
	\begin{proof}
		From \eqref{asa} and \eqref{imp} we can bound
		\begin{align}\label{tree}
		\nonumber\mathbb{P}(|\CC(V_n)|>k)&\leq \mathbb{P}(S_t>0\hspace{0.15cm}\forall t\in [k])\\
		\nonumber&\leq \mathbb{P}(S_{\gamma^*}>0)+\mathbb{P}(S_t>0\hspace{0.15cm}\forall t\in [k],S_{\gamma^*}\leq 0)\\
		\nonumber&=\mathbb{P}(S_{\gamma^*}>0)\\
		\nonumber&\leq \mathbb{P}(S_{\gamma^*}>0, \gamma<k)+\mathbb{P}(\gamma\geq k)\\
		&\leq \mathbb{P}(S_{\gamma}>0, \gamma<k)+\frac{\mathbb{E}(\gamma)}{k}.
		\end{align}
		Observe that, if $\gamma<k$, then $\gamma<H'$ (since $H'\geq k$). If this happens and $S_{\gamma}>0$, then we must have $S_{\gamma}\geq H$. Therefore we can bound
		\begin{equation}\label{doubtree}
		\mathbb{P}(S_{\gamma}>0, \gamma<k)\leq \mathbb{P}(S_{\gamma}\geq H,\gamma<H')\leq \mathbb{P}(S_{\gamma}\geq H).
		\end{equation}
		Thus combining (\ref{tree}) and (\ref{doubtree}) we arrive at
		\begin{align}\label{kokko}
		\mathbb{P}(|\CC(V_n)|>k)&\leq \mathbb{P}(S_{\gamma}\geq H)+\frac{\mathbb{E}(\gamma)}{k}.
		\end{align}
		The probability on the right-hand side of \eqref{kokko} can be bounded from above using Markov's inequality; this is possible because $S_{\gamma}$ is always non-negative. (Indeed, if $\gamma<H'$ then $S_{\gamma}\in \{0,H,H+1,\dots,\}$ and hence, in particular, $S_{\gamma}\geq 0$. If $\gamma=H'$, then $1\leq S_{H'-1}\leq H-1$ and hence $S_{H'}=S_{H'-1}+\Upsilon_{H'}-1\geq \Upsilon_{H'}\geq 0$. Thus $S_{\gamma}\geq 0$ always.) Consequently we can apply Markov's inequality to conclude that
		\begin{equation}\label{porco}
		\mathbb{P}(S_{\gamma}\geq H)\leq \frac{\mathbb{E}(S_{\gamma})}{H}.
		\end{equation}
		Recall that $\nu_n=\mathbb{E}(W^*_n)=\mathbb{E}(\Upsilon_1)$. Since $S_t+t(1-\nu_n)$ is a martingale (with respect to the filtration formed by the $\sigma$- fields $\mathcal{F}_t \coloneqq  \sigma(\Upsilon_i, i \leq t)$) and $\gamma\leq H'$ is a bounded stopping time, we can apply Theorem \ref{OptStopTh} (with $\tau_1=0$ and $\tau_2=\gamma$) to obtain $1=\mathbb{E}(S_{\gamma})+\mathbb{E}(\gamma)(1-\nu_n)$, or equivalently $\mathbb{E}(S_{\gamma})=1-\mathbb{E}(\gamma)(1-\nu_n)$. This shows that
		\begin{equation}\label{deus}
		\mathbb{P}(S_{\gamma}\geq H)\leq \frac{1-(1-\nu_n)\mathbb{E}(\gamma)}{H};
		\end{equation}
		substituting this bound into \eqref{kokko} yields the desired result.
	\end{proof}

	In order to obtain an upper bound for the expression on the right-hand side of \eqref{mainboundforprobwelike} (and so for the probability that $\mathcal{C}(V_n)$ contains more than $k$ nodes) we need to evaluate $\mathbb{E}(\gamma)$. This is achieved by means of the following
	\begin{lem}\label{lem45}
		Consider $S_t$ and $\gamma$ as in Lemma \ref{upperbound}. Define $b_H\coloneqq 2H^2\vee (\mathbb{E}((W^*_n)^2)+1-\nu_n)$. If $1-\nu_n>0$ we have
		\begin{equation}\label{oneminuspos}
		\mathbb{E}(\gamma)\mathbb{E}((W^*_n)^2)\bigg(1-\frac{1-\nu_n}{\mathbb{E}((W^*_n)^2)}H-\frac{2b_H}{\mathbb{E}((W^*_n)^2)H'}\bigg)\leq H+3w_1+\frac{w^2_1}{H}.
		\end{equation}
		On the other hand, if $1-\nu_n\leq 0$ we have
		\begin{equation}\label{oneminusneg}
		\mathbb{E}(\gamma)\mathbb{E}((W^*_n)^2)\bigg(1-\frac{\nu_n-1}{\mathbb{E}((W^*_n)^2)}\left[H+3w_1+\frac{w^2_1}{H}-1\right]-\frac{2b_H}{\mathbb{E}((W^*_n)^2)H'}\bigg)\\
		\leq H+3w_1+\frac{w^2_1}{H}.
		\end{equation}
	\end{lem}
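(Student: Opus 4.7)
The plan is to apply the Optional Stopping Theorem to the Doob decomposition of $(S_t^2)$ at the bounded stopping time $\gamma$, and then to bound the resulting expectations using the overshoot estimate of Corollary~\ref{OurCorollary6} together with the confinement $1 \leq S_t \leq H-1$ for $t < \gamma$.

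First, set $X_i := \Upsilon_i - 1$. Since $\Upsilon_1$ is mixed Poisson with random parameter $W_n^*$, a direct computation gives $\E[X_1] = \nu_n - 1$ and $\E[X_1^2] = \E[(W_n^*)^2] + 1 - \nu_n$; note that this second moment matches exactly the second argument of the maximum defining $b_H$. A standard Doob decomposition shows
\[
M_t \;:=\; S_t^2 - \E[X_1^2]\,t - 2(\nu_n - 1)\sum_{i=0}^{t-1} S_i
\]
is an $(\mathcal{F}_t)$-martingale, so Theorem~\ref{OptStopTh} applied with $\tau_1 = 0$, $\tau_2 = \gamma \leq H'$ yields the fundamental identity
\[
\E[X_1^2]\,\E[\gamma] \;=\; \E[S_\gamma^2] - 1 - 2(\nu_n - 1)\,\E\!\left[\sum_{i=0}^{\gamma-1} S_i\right].
\]
Writing $\E[X_1^2] = \E[(W_n^*)^2] + (1 - \nu_n)$ isolates the desired $\E[(W_n^*)^2]\,\E[\gamma]$ on the left.

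Second, I would upper bound $\E[S_\gamma^2]$ by splitting on $\{S_\gamma \geq H\}$ versus $\{S_\gamma < H\}$. On the first event, expanding $S_\gamma^2 = H^2 + 2H(S_\gamma - H) + (S_\gamma - H)^2$ and applying Corollary~\ref{OurCorollary6} with $f(x)=x$ and $f(x)=x^2$ gives $\E[S_\gamma^2 \mid S_\gamma \geq H] \leq H^2 + 2Hw_1 + w_1 + w_1^2$. Markov's inequality plus the Wald identity $\E[S_\gamma] = 1 + (\nu_n - 1)\E[\gamma]$ gives $\Prob(S_\gamma \geq H) \leq \E[S_\gamma]/H$, so (using $H \geq 1$ to convert $w_1/H \leq w_1$) we get $\E[S_\gamma^2\,\mathbbm{1}\{S_\gamma \geq H\}] \leq (H + 3w_1 + w_1^2/H)\,\E[S_\gamma]$. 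On $\{S_\gamma < H\}$, the only possibility besides $S_\gamma = 0$ (which contributes nothing) is $\gamma = H'$, and then $S_\gamma^2 < H^2 \leq b_H/2$, so $\E[S_\gamma^2\,\mathbbm{1}\{S_\gamma < H\}] \leq H^2\,\E[\gamma]/H' \leq 2 b_H\,\E[\gamma]/H'$. Hence
\[
\E[S_\gamma^2] \;\leq\; \bigl(H + 3w_1 + w_1^2/H\bigr)\,\E[S_\gamma] + 2b_H\,\E[\gamma]/H'.
\]

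Third, I would handle the area term using that $\E\bigl[\sum_{i=0}^{\gamma-1} S_i\bigr] \geq \E[\gamma]$ (from $S_i \geq 1$) or $\leq H\,\E[\gamma]$ (from $S_i \leq H-1$), choosing whichever has the correct sign. In the supercritical case $\nu_n \geq 1$, the lower bound gives $-2(\nu_n-1)\E[\sum S_i] \leq -2(\nu_n-1)\E[\gamma]$; plugging this and the bound on $\E[S_\gamma^2]$ into the identity, and using $\E[S_\gamma] = 1 + (\nu_n-1)\E[\gamma]$, algebra collects the $(\nu_n-1)\E[\gamma]$ contributions into the factor $(H + 3w_1 + w_1^2/H - 1)$ appearing in (\ref{oneminusneg}) (the $-1$ originates from subtracting $1$ from the $\E[S_\gamma^2]$ bound). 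In the subcritical case, one instead uses the upper bound on $\E[\sum S_i]$ together with $\E[S_\gamma] \leq 1$ to derive (\ref{oneminuspos}). The main bookkeeping obstacle is precisely this sign-dependent choice and, in the subcritical case, consolidating the several $(1-\nu_n)\E[\gamma]$ contributions (from the decomposition of $\E[X_1^2]$, from the area bound, and from the overshoot) into the single factor $(1-\nu_n)H$ on the left-hand side of (\ref{oneminuspos}), while letting $2b_H/H'$ absorb the remainder coming from $\{\gamma = H'\}$.
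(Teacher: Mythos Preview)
Your approach is essentially the paper's: the same quadratic martingale, Optional Stopping at $\gamma$, the overshoot bound via Corollary~\ref{OurCorollary6}, and the sandwich $\E[\gamma]\le \E\bigl[\sum_{i<\gamma}S_i\bigr]\le (H-1)\E[\gamma]$. Your splitting of $\E[S_\gamma^2]$ according to $\{S_\gamma\ge H\}$ versus $\{S_\gamma<H\}$ is a slight variant of the paper's split on $\{\gamma<H'\}$ versus $\{\gamma=H'\}$; yours is arguably cleaner, since on $\{S_\gamma<H,\ S_\gamma\neq 0\}\subset\{\gamma=H'\}$ you get $S_\gamma^2<H^2$ directly and avoid the paper's expansion $S_{H'}^2=S_{H'-1}^2+2S_{H'-1}(\Upsilon_{H'}-1)+(\Upsilon_{H'}-1)^2$ (this is also why the paper needs the second argument in the definition of $b_H$, whereas you only use $2H^2$).

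There is one slip to correct. In the subcritical case you propose to use $\E[S_\gamma]\le 1$. If you do that, the contributions from the $\E[X_1^2]$ decomposition and the area bound alone give a coefficient $(1-\nu_n)(2H-3)$ on the left, not $(1-\nu_n)H$, so you would not recover \eqref{oneminuspos} as stated. The fix---which your final paragraph in fact hints at when you list ``the overshoot'' among the $(1-\nu_n)\E[\gamma]$ contributions---is to keep the exact $\E[S_\gamma]=1-(1-\nu_n)\E[\gamma]$ in the bound $\E[S_\gamma^2]\le (H+3w_1+w_1^2/H)\,\E[S_\gamma]+2b_H\E[\gamma]/H'$, just as you do in the supercritical case. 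Then the $-(1-\nu_n)\E[\gamma](H+3w_1+w_1^2/H)$ term combines with the area bound $2(1-\nu_n)(H-1)\E[\gamma]$ to leave $-(1-\nu_n)\bigl[H-3(w_1+1)-w_1^2/H\bigr]\E[\gamma]$, and since $H-3(w_1+1)-w_1^2/H<H$ this yields \eqref{oneminuspos}.
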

	\begin{proof}
		Observe that the process defined by
		\begin{equation}
		M_t=S_t^2+t[\nu_n-1-\E((W_n^*)^2)]-2(\nu_n-1)\sum_{k=0}^{t-1}S_k
		\end{equation}
		is a martingale (as before, with respect to the filtration formed by the $\sigma$-fields $\mathcal{F}_t \coloneqq  \sigma(\Upsilon_i, i \leq t)$). This easily follows from the fact that
		\begin{align*}		\mathbb{E}\big(S^2_{t+1}|\mathcal{F}_t\big)&=\mathbb{E}\big(S^2_{t}+2S_t(\Upsilon_{t+1}-1)+(\Upsilon_{t+1}-1)^2|\mathcal{F}_t\big)\\
		&=S^2_t+2S_t(\nu_n-1)+\nu_n+\mathbb{E}((W^*_n)^2)-2\nu_n+1\\
		&=\textcolor{black}{S^2_t+2S_t(\nu_n-1)-(1-\nu_n-\mathbb{E}((W^*_n)^2))}.
		\end{align*}
		Then, by Theorem \ref{OptStopTh} with $\tau_1\coloneqq0$ and $\tau_2=\gamma\leq H'$, we obtain
		\begin{equation}\label{Mart}
		\nonumber1=\mathbb{E}(S^2_{\gamma})-\mathbb{E}(\gamma)\big(\mathbb{E}((W^*_n)^2)+1-\nu_n\big)-2(\nu_n-1)\mathbb{E}\Big(\sum_{k=0}^{\gamma-1}S_k\Big),
		\end{equation}
		from which we arrive at
		\begin{equation}\label{ppppppppp}
		\mathbb{E}(\gamma)\big(\mathbb{E}((W^*_n)^2)+1-\nu_n\big)+2(\nu_n-1)\mathbb{E}\Big[\sum_{k=0}^{\gamma-1}S_k\Big]\leq \mathbb{E}(S^2_{\gamma}).
		\end{equation}
		Next we bound from above the expected value of $S^2_{\gamma}$. We have that
		\begin{equation}\label{stAr}
		\mathbb{E}(S^2_{\gamma})=\mathbb{E}\big(S^2_{\gamma}\mathbbm{1}_{\{\gamma<H'\}}\big)+\mathbb{E}\big(S^2_{\gamma}\mathbbm{1}_{\{\gamma= H'\}}\big).
		\end{equation}
		Since $S^2_{t}=S^2_{t-1}+2S_{t-1}(\Upsilon_{t}-1)+(\Upsilon_{t}-1)^2$ for all $t\geq 1$, we can write
		\begin{equation*}
		\mathbb{E}\big(S^2_{\gamma}\mathbbm{1}_{\{\gamma= H'\}}\big)=\mathbb{E}\big(S^2_{H'-1}\mathbbm{1}_{\{\gamma= H'\}}\big)\\
		+2\mathbb{E}\big(S_{H'-1}(\Upsilon_{H'}-1)\mathbbm{1}_{\{\gamma= H'\}}\big) +\mathbb{E}\big((\Upsilon_{H'}-1)^2\mathbbm{1}_{\{\gamma= H'\}}\big).
		\end{equation*}
		On the event $\{\gamma=H'\}$ (which belongs to the $\sigma$-algebra generated by the the first $H'-1$ random variables $\Upsilon_i$) we have that $1\leq S_{H'-1}<H$ and hence $S^2_{H'-1}<H^2$. Moreover, $S_{H'-1}\mathbbm{1}_{\{\gamma= H'\}}$ and $\Upsilon_{H'}-1$  are independent, as well as $(\Upsilon_{H'}-1)^2$ and $\mathbbm{1}_{\{\gamma= H'\}}$. Therefore, when $n$ is large enough we can write (using Markov's inequality)
		\begin{align}\label{stAAr}
		\nonumber\mathbb{E}\big(S^2_{\gamma}\mathbbm{1}_{\{\gamma= H'\}}\big)&\leq H^2\mathbb{P}(\gamma=H')+2\mathbb{E}\big(S_{H'-1}\mathbbm{1}_{\{\gamma= H'\}}\big)(\nu_n-1)\\	\nonumber&\hspace{0.8cm}+\mathbb{E}\big((\Upsilon_{H'}-1)^2\big)\mathbb{P}(\gamma=H')\\
		\nonumber&\leq H^2\frac{\mathbb{E}(\gamma)}{H'}+2H|1-\nu_n|\frac{\mathbb{E}(\gamma)}{H'}+\mathbb{E}\big((\Upsilon_{H'}-1)^2\big)\frac{\mathbb{E}(\gamma)}{H'}\\
		\nonumber&\leq 2\frac{\mathbb{E}(\gamma)}{H'}(2H^2\vee (\mathbb{E}((W^*_n)^2)+1-\nu_n))\\
		&=2\frac{\mathbb{E}(\gamma)}{H'}b_H,
		\end{align}
		where the last inequality follows from the facts that
		\begin{equation*}
		\mathbb{E}\big((\Upsilon_{H'}-1)^2\big)=\mathbb{E}\big(w_M+w^2_M\big)+1-2\nu_n=\mathbb{E}((W^*_n)^2)+1-\nu_n, \text{ }H|1-\nu_n|\leq H
		\end{equation*}
		and we set $b_H\coloneqq 2H^2\vee (\mathbb{E}((W^*_n)^2)+1-\nu_n)$. Next we consider the term $\mathbb{E}\big(S^2_{\gamma}\mathbbm{1}_{\{\gamma<H'\}}\big)$. Note that on the event $\{\gamma<H'\}$ we have either $S_{\gamma}=0$ or $S_{\gamma}\geq H$. Therefore we can write
		\begin{equation*}
		\mathbb{E}\big(S^2_{\gamma}\mathbbm{1}_{\{\gamma<H'\}}\big)=\mathbb{E}\big(S^2_{\gamma}\mathbbm{1}_{\{\gamma<H'\}}\mathbbm{1}_{\{S_{\gamma}\geq H\}}\big)
		\leq \mathbb{E}\big(S^2_{\gamma}\mathbbm{1}_{\{S_{\gamma}\geq H\}}\big)=\mathbb{P}(S_{\gamma}\geq H)\mathbb{E}\big(S^2_{\gamma}|S_{\gamma}\geq H\big).
		\end{equation*}
		Now, setting $f(x)\coloneqq 2Hx+x^2$ (which is increasing for $x \geq 0$) and decomposing $S^2_{\gamma}=2H(S_{\gamma}-H)+(S_{\gamma}-H)^2+H^2=f(S_{\gamma}-H)+H^2$, applying Corollary \ref{OurCorollary6} we obtain 
		\begin{equation*}
		\E(f(S_{\gamma}-H)| S_{\gamma}\geq H)\leq\E(2HY_{w_1} +Y_{w_1}^2)=2Hw_1+w_1+w_1^2.
		\end{equation*}
		Therefore, since $H\geq 1$, we arrive at
		\begin{align}\label{condmean}
		\E(S_{\gamma}^2| S_{\gamma}\geq H) \leq H^2+2Hw_1 +w_1+ w_1^2
		\leq  H^2+3Hw_1+w_1^2.
		\end{align}
		Thus, using \eqref{deus}, we obtain
		\begin{multline}\label{stAAAr}
		\mathbb{E}\big(S^2_{\gamma}\mathbbm{1}_{\{\gamma<H'\}}\big)\leq \mathbb{P}(S_{\gamma}\geq H)\big(H^2+3Hw_1+w_1^2\big)\\
		\leq \frac{1-(1-\nu_n)\mathbb{E}(\gamma)}{H}\big(H^2+3Hw_1+w_1^2\big)\\
		=[1-(1-\nu_n)\mathbb{E}(\gamma)]\big(H+3w_1+w^2_1/H\big).
		\end{multline}
		Consequently, combining (\ref{stAr}), (\ref{stAAr}) and (\ref{stAAAr}) we arrive at
		\begin{equation*}
		\mathbb{E}(S^2_{\gamma})\leq [1-(1-\nu_n)\mathbb{E}(\gamma)]\big(H+3w_1+w^2_1/H\big)+2\frac{\mathbb{E}(\gamma)}{H'}b_H.
		\end{equation*}
		Therefore using \eqref{ppppppppp} we obtain
		\begin{multline}\label{rearr}
		\mathbb{E}(\gamma)\big(\mathbb{E}((W^*_n)^2)+1-\nu_n\big)+2(\nu_n-1)\mathbb{E}\big(\sum_{k=0}^{\gamma-1}S_k\big)\\
		\leq [1-(1-\nu_n)\mathbb{E}(\gamma)]\big(H+3w_1+w^2_1/H\big)+2\frac{\mathbb{E}(\gamma)}{H'}b_H.
		\end{multline}
		Now observe that, by definition of $\gamma$, we have $1\leq S_{k}\leq H-1$ for $1\leq k\leq \gamma -1$ and so
		\begin{equation}\label{impineq}
		\gamma\leq \sum_{k=0}^{\gamma-1}S_k\leq \gamma(H-1).
		\end{equation}
		If $1-\nu_n>0$ then, using \eqref{impineq} and rearranging the terms in \eqref{rearr} we obtain
		\begin{equation*}
		\nonumber\mathbb{E}(\gamma)\mathbb{E}((W^*_n)^2)\Big(1-\frac{1-\nu_n}{\mathbb{E}((W^*_n)^2)}\Big[H-3(w_1+1)-\frac{w^2_1}{H}\Big]-\frac{2b_H}{\mathbb{E}((W^*_n)^2)H'}\Big)
		\leq H+3w_1+\frac{w^2_1}{H}.
		\end{equation*}
		Since trivially 
		\begin{equation*}
		\frac{1-\nu_n}{\mathbb{E}((W^*_n)^2)}\Big[H-3(w_1+1)-\frac{w^2_1}{H}\Big]< \frac{1-\nu_n}{\mathbb{E}((W^*_n)^2)}H,
		\end{equation*}
		we arrive at
		\begin{equation}
		\nonumber\mathbb{E}(\gamma)\mathbb{E}((W^*_n)^2)\Big(1-\frac{1-\nu_n}{\mathbb{E}((W^*_n)^2)}H-\frac{2b_H}{\mathbb{E}((W^*_n)^2)H'}\Big)\leq H+3w_1+\frac{w^2_1}{H}.
		\end{equation}
		On the other hand, if $1-\nu_n\leq 0$, using \eqref{impineq} and rearranging the terms in \eqref{rearr} we obtain
		\begin{multline*}
		\nonumber\mathbb{E}(\gamma)\mathbb{E}((W^*_n)^2)\Big(1-\frac{\nu_n-1}{\mathbb{E}((W^*_n)^2)}\Big[H+3w_1+\frac{w^2_1}{H}-1\Big]-\frac{2b_H}{\mathbb{E}((W^*_n)^2)H'}\Big)
		\leq H+3w_1+\frac{w^2_1}{H},
		\end{multline*}
		completing the proof.
	\end{proof}

	\subsubsection{Proof of Theorem \ref{teo1taugreater4} (case $\tau>4$) -- $|\CC_{\max}|$ is unlikely to be larger than $An^{2/3}$}
	Note that, since	
	\begin{align*}
	\frac{|1-\nu_n|}{\mathbb{E}((W^*_n)^2)}H\leq \frac{|1-\nu_n|}{\mathbb{E}((W^*_n)^2)}\Big[H+3w_1+\frac{w^2_1}{H}-1\Big],
	\end{align*}
	we can use Lemma \ref{lem45} to bound
	\begin{multline}\label{ddddddaaaaaaa}
	\mathbb{E}(\gamma)\mathbb{E}((W^*_n)^2)\Big(1-\frac{|1-\nu_n|}{\mathbb{E}((W^*_n)^2)}\Big[H+3w_1+\frac{w^2_1}{H}-1\Big]-\frac{2b_H}{\mathbb{E}((W^*_n)^2)H'}\Big)
	\leq H+3w_1+\frac{w^2_1}{H},
	\end{multline}
	and this holds independently of the sign of $1-\nu_n$. Let $H=\lfloor n^{1/3}\rfloor$ and observe that, since $w_1=O(n^{1/(\tau-1)})$ (see Lemma \ref{maxw}) then, as $n\rightarrow \infty$, we obtain $w_1/H \asymp n^{-\frac{\tau-4}{3(\tau-1)}}\ll 1$ and hence, for all sufficiently large $n$,
	\begin{equation}\label{comportamento}
	H+3w_1+\frac{w^2_1}{H}= O(H).
	\end{equation}
	Moreover, we know from Proposition \ref{ordersize} that $|1-\nu_n|=O(n^{-\frac{\tau-3}{\tau-1}})$ and hence $|1-\nu_n|H=O(n^{-2\frac{\tau-4}{3(\tau-1)}})=o(1)$. Furthermore, again from Proposition \ref{ordersize}, we know that
	\begin{equation}\label{barabba}
	\Big|\mathbb{E}((W^*_n)^2)-\frac{\mathbb{E}(W^3)}{\mathbb{E}(W)}\Big|=O(n^{-\frac{\tau-4}{\tau-1}}).
	\end{equation}
	Taking $H'$ in such a way that $H^2=o(H')$ (whence $b_H/H'=o(1)$) we thus see that the expression within round brackets in (\ref{ddddddaaaaaaa}) is strictly positive for all sufficiently large $n$. Therefore we can write (when $n$ is large enough)
	\begin{equation}\label{daje}
	\mathbb{E}(\gamma)\leq \frac{H+3w_1+\frac{w^2_1}{H}}{\mathbb{E}((W^*_n)^2)}\Big(1-O(n^{-2\frac{\tau-4}{3(\tau-1)}})-O\Big(\frac{b_H}{H'}\Big)\Big)^{-1}.
	\end{equation}
	Using \eqref{comportamento} and \eqref{barabba}, together with the inequality $\sum_{k=2}^{\infty}x^k\leq 2x^2$ (which is valid for all $x\in [0,1/2]$), it is not difficult to show that the quantity which appears on the right-hand side of \eqref{daje} is bounded from above (for all large enough $n$) by $4H\mathbb{E}(W)/\mathbb{E}(W^3)$. Therefore we obtain
	\begin{equation}\label{boundmeangamma}
	\mathbb{E}(\gamma)\leq 4H\frac{\mathbb{E}(W)}{\mathbb{E}(W^3)}.
	\end{equation}
	Thus using Lemma \ref{upperbound} we can bound
	\begin{align}\label{zizza}
	\mathbb{P}(|\CC(V_n)|>k)\leq \frac{1}{H}+\frac{|1-\nu_n|\mathbb{E}(\gamma)}{H}+\frac{\mathbb{E}(\gamma)}{k}.
	\end{align}
	Since $|1-\nu_n|H\ll 1$, substituting into \eqref{zizza}  the bound for $\mathbb{E}(\gamma)$ stated in \eqref{boundmeangamma} we obtain that (for all large enough $n$)
	\begin{equation}\label{44Prime}
	\mathbb{P}(|\CC(V_n)|>k)\leq \frac{2}{H}+4k^{-1}H\frac{\mathbb{E}(W)}{\mathbb{E}(W^3)}.
	\end{equation}
	Finally, denoting by $N_k\coloneqq \sum_{i=1}^{n}\mathbbm{1}_{\{|\mathcal{C}(i)|>k\}}$ the number of vertices contained in components formed by more than $k$ nodes, using Markov's inequality we obtain
	\begin{equation*}
	\Prob(|\CC_{\max}|>k)\leq \Prob(N_k>k)\leq \frac{\E(N_k)}{k}\leq \frac{n\Prob(|\CC(V_n)|>k)}{k}
	\leq 2\frac{n}{Hk}+4\frac{nH}{k^2}\frac{\mathbb{E}(W)}{\mathbb{E}(W^3)}.
	\end{equation*}
	Taking $k=\lfloor A n^{2/3} \rfloor$ and recalling the definition of $H$ we see that there is a finite constant $c_1>0$ (which depends on $c_F$ and $\tau$) such that 
	\begin{equation}
	\Prob(|\CC_{\max}|>k)\leq \frac{c_1}{A}
	\end{equation}
	for all large enough $n$, which concludes the proof since $A n^{2/3}\geq \lfloor An^{2/3}\rfloor=k$.

	\subsubsection{Proof of Theorem \ref{teo2tauin34} (case $\tau \in (3,4)$) -- $|\CC_{\max}|$ is unlikely to be larger than $An^{\frac{\tau-2}{\tau-1}}$}
	\textcolor{black}{Before starting with the actual proof we need a simple result, whose proof is postponed to Subsection \ref{subsectionauxiliary}, which guarantees that, when $F$ (i.e. the distribution function determining the vertex weights) satisfies (\ref{COND}) for some $\tau\in (3,4)$ and $c_F>0$, then $1-\nu_n>0$ for all sufficiently large $n$. 
		\begin{lem}\label{lempositive}
			Suppose that there exist $\tau> 3$ and $c_F>0$ such that (\ref{COND}) holds. Then, for all sufficiently large $n$, we have that $1-\nu_n>\frac{n^{-\frac{\tau-3}{\tau-1}}}{\tau-1} >0$.
		\end{lem}
	}
	We can now proceed with the proof of Theorem \ref{teo2tauin34}. Since $1-\nu_n> 0$ for all large enough $n$, it follows from Lemma \ref{lem45} that
	\begin{equation}\label{bubbe}
	\mathbb{E}(\gamma)\mathbb{E}((W^*_n)^2)\Big(1-\frac{1-\nu_n}{\mathbb{E}((W^*_n)^2)}H-\frac{2b_H}{\mathbb{E}((W^*_n)^2)H'}\Big)\leq H+3w_1+\frac{w^2_1}{H}.
	\end{equation}
	Let $H\coloneqq \lfloor \delta n^{1/(\tau-1)}\rfloor $, where $\delta\in (0,1)$ is some constant that we specify later. From Proposition \ref{ordersize} we know that, for all large enough $n$, $|\nu_n-1|\leq C_1n^{-\frac{\tau-3}{\tau-1}}$ for some positive constant $C_1$ which depends on $c_F$ and $\tau$. Since $1-\nu_n>0$ for all large enough $n$, we obtain 
	\begin{equation}\label{sPi}
	(1-\nu_n)H=|1-\nu_n|H\leq \delta C_1n^{\frac{4-\tau}{\tau-1}}.
	\end{equation}
	Next, we bound the second moment of $W^*_n$. Since
	\begin{align*}
	\nonumber\mathbb{E}((W^*_n)^2) =l^{-1}_n \sum_{j=1}^{n}w^3_j=\frac{(c_Fn)^{3/(\tau-1)}}{l_n}\sum_{j=1}^{n}j^{-3/(\tau-1)},
	\end{align*}
	then, using the fact that
	\begin{equation*}
	\int_{1}^{b+1}x^{-r} \mathrm{d}x\leq\sum_{i=1}^{b}i^{-r}\leq 1+\int_{1}^{b}x^{-r} \mathrm{d}x \hspace{0.4cm}(r>0),
	\end{equation*}
	we can write $0.5(\tau-1)(\tau-2)^{-1}c^{1/(\tau-1)}_F n\leq l_n\leq 2(\tau-1)(\tau-2)^{-1}c^{1/(\tau-1)}_F n$ and
	\begin{equation}\label{46Prime}
	C_2n^{\frac{4-\tau}{\tau-1}}\leq\mathbb{E}((W^*_n)^2)\leq C_3n^{\frac{4-\tau}{\tau-1}}
	\end{equation}
	for all large $n$ and for some finite constants $0<C_2<C_3$ which depend on $c_F$ and $\tau$. 
	(We remark that here we do not need an upper bound for $\mathbb{E}((W^*_n)^2)$, but only a lower bound. However, later on we will need the upper bound too; we decided to state both bounds here for referencing purposes). Therefore, combining (\ref{sPi}) and (\ref{46Prime}) we obtain
	\[\frac{1-\nu_n}{\mathbb{E}((W^*_n)^2)}H\leq \delta \frac{C_1}{C_2}\]
	and the quantity on the right-hand side of the last inequality can be made at most $1/2$ by choosing $\delta\leq C_2/2C_1$. Since the term $2b_H/\mathbb{E}((W^*_n)^2)H'$ can be made as small as we like by choosing a proper value of $H'$ (in particular, a value $H'>n^{\frac{\tau-2}{\tau-1}}$ would do the job) we conclude that, for all large enough $n$ (and taking $\delta\leq  C_2/2C_1$)
	\begin{align}\label{malle}
	\mathbb{E}((W^*_n)^2)\Big(1-\frac{1-\nu_n}{\mathbb{E}((W^*_n)^2)}H-\frac{2b_H}{\mathbb{E}((W^*_n)^2)H'}\Big)\geq C_4n^{\frac{4-\tau}{\tau-1}},
	\end{align}
	for some positive constant $C_4$ which depends on $c_F$ and $\tau$. Since $w_1=O(n^{1/(\tau-1)})$, we can combine (\ref{bubbe}) and (\ref{malle}) together to obtain that $\mathbb{E}(\gamma)\leq Cn^{\frac{\tau-3}{\tau-1}}$ for some finite constant $C>0$ which depends $c_F$ and $\tau$. Using Lemma \ref{upperbound} together with our previous estimate on $\mathbb{E}(\gamma)$ we arrive at
	\begin{equation}\label{48}
	\mathbb{P}(|\CC(V_n)|>k)\leq \frac{1-\mathbb{E}(\gamma)(1-\nu_n)}{H}+\frac{\mathbb{E}(\gamma)}{k}\leq \frac{1}{H}+\frac{\mathbb{E}(\gamma)}{k}\leq \frac{1}{H}+C\frac{n^{\frac{\tau-3}{\tau-1}}}{k}.
	\end{equation}
	Proceeding as in the proof of Theorem \ref{teo1taugreater4} we obtain
	\begin{equation*}
	\mathbb{P}(|\CC_{\max}|>k)\leq \frac{n}{kH}+\frac{n Cn^{\frac{\tau-3}{\tau-1}}}{k^2}=\frac{n}{kH}+\frac{Cn^{2\frac{\tau-2}{\tau-1}}}{ k^2}.
	\end{equation*}
	Taking $k=\lfloor An^{\frac{\tau-2}{\tau-1}}\rfloor$ and recalling the definition of $H$ we finally conclude that
	\begin{equation*}
	\mathbb{P}(|\CC_{\max}|>k)\leq \frac{c_3}{A}
	\end{equation*}
	for some finite constant $c_3>0$ that depends on $c_F$ and $\tau$. Since $A n^{\frac{\tau-2}{\tau-1}}\geq k$, the desired result follows.

	\subsubsection{Proof of Theorem \ref{teoexp} -- The exponential upper bound}
	Here we wish to improve the polynomial upper bounds stated in Theorems \ref{teo1taugreater4} and \ref{teo2tauin34}. 
	
	\textcolor{black}{In what follows we work under the assumption that (\ref{COND}) is satisfied for some $\tau>3$ and $c_F>0$. }
	
	Let $Q_t\coloneqq 1+\sum_{i=1}^{t}(|\mathcal{M}_i|-1)$ for $t\leq \beta$, with $\beta$ defined as $\gamma$ in Lemma \ref{upperbound} but using $Q_t$ instead $S_t$, and set $\beta^*=\beta \wedge k$, \textcolor{black}{where $k$ will be chosen later (and its actual value will depend on the range of $tau$).} Note that $|\mathcal{A}^{BP}_{t}|\leq Q_t$ for $t\leq \beta$, and so in particular $|\mathcal{A}^{BP}_{\beta*}|\leq Q_{\beta*}$.
	
	Define, for $t\leq T\ll n,$
	\begin{equation}
	Z_t\coloneqq \sum_{j=1}^{t}\big(|\widetilde{\mathcal{M}}_{\beta^*+j}|-1\big) 
	\end{equation}
	and observe that, if $|\AAA^{BP}_{\beta^*+j}|>0$ for all \textcolor{black}{$1\leq j\leq t$}, then 
	\begin{align*}
	Z_j\overset{def}{=}\sum_{i=1}^{j}\big(|\widetilde{\mathcal{M}}_{\beta^*+i}|-1\big)=\sum_{h=\beta^*+1}^{\beta^*+j}\big(|\widetilde{\mathcal{M}}_{h}|-1\big)=|\AAA^{BP}_{\beta^*+j}|-|\AAA^{BP}_{\beta^*}|.
	\end{align*}
	Thus, setting $P\coloneqq \mathbb{P}(Q_{\beta^*}>0)$ and using the law of total expectation we obtain
	\begin{multline}\label{49Prime}
	\mathbb{P}\big(|\AAA^{BP}_{\beta^*+j}|>0 \hspace{0.15cm}\forall j\in [t]|Q_{\beta^*}>0\big)\\
	=P^{-1}\mathbb{E}\Big(\mathbb{E}_{Q_{\beta^*}}\big(\mathbbm{1}_{\{|\AAA^{BP}_{\beta^*+j}|>0 \hspace{0.15cm}\forall j\in [t]\}}\mathbbm{1}_{\{Q_{\beta^*}>0\}}\big)\Big)\\
	\leq P^{-1}\mathbb{E}\big(\mathbbm{1}_{\{Q_{\beta^*}>0\}}\mathbb{P}_{Q_{\beta^*}}(Z_t>-|\AAA^{BP}_{\beta^*}|)\big),
	\end{multline}
	where we denote by $\mathbb{P}_{Q_{\beta^*}}(\cdot)$ the probability measure $\mathbb{P}(\cdot |Q_{\beta^*}>0)$ and we write $\mathbb{E}_{Q_{\beta^*}}(\cdot)$ for the expectation operator with respect to $\mathbb{P}_{Q_{\beta^*}}(\cdot)$. Since $|\mathcal{A}^{BP}_{\beta^*}|\leq Q_{\beta^*}$, using Markov's inequality we obtain (for any $r>0$)
	\begin{align}\label{hjh}
	\nonumber P^{-1}\mathbb{E}\big(\mathbbm{1}_{\{Q_{\beta^*}>0\}}\mathbb{P}_{Q_{\beta^*}}(Z_t>-|\AAA^{BP}_{\beta^*}|)\big)&\leq P^{-1}\mathbb{E}\big(\mathbbm{1}_{\{Q_{\beta^*}>0\}}\mathbb{P}_{Q_{\beta^*}}(Z_t>-Q_{\beta^*})\big)\\
	\nonumber &\leq P^{-1}\mathbb{E}\big(\mathbbm{1}_{\{Q_{\beta^*}>0\}}\mathbb{P}_{Q_{\beta^*}}(e^{rZ_t}>e^{-rQ_{\beta^*}})\big)\\
	\nonumber &\leq P^{-1}\mathbb{E}\big(\mathbbm{1}_{\{Q_{\beta^*}>0\}}e^{rQ_{\beta^*}}\mathbb{E}_{Q_{\beta^*}}(e^{rZ_t})\big)\\
	&= \textcolor{black}{\mathbb{E}_{Q_{\beta^*}}(e^{rZ_t}) \mathbb{E}_{Q_{\beta^*}}(e^{rQ_{\beta^*}}). }
	\end{align}
	With the next lemma \textcolor{black}{(whose proof is given in Subsection \ref{subsectionauxiliary})} we establish an upper bound for the first expectation in \eqref{hjh}, i.e. the $\mathbb{E}_{Q_{\beta*}}(\cdot)$-expectation of $e^{rZ_t}$.
	\begin{lem}\label{LemUmbi}
		Let $r\leq 1/w_1$ and suppose that $t\leq T\ll n$. Then, for all large enough $n$, we have that
		\begin{multline}\label{50Prime}
		\mathbb{E}_{Q_{\beta^*}}(e^{rZ_t})=\mathbb{E}_{Q_{\beta^*}}\big(e^{r\sum_{j=1}^{t}(|\widetilde{\mathcal{M}}_{\beta*+j}-1)}\big)\\
		\leq 2\exp\Big\{r^2t\mathbb{E}((W^*_n)^2)(1+c'/w_1)-r\nu_n\frac{\tau-2}{\tau-1}\frac{t^2}{2n}\Big\}\cdot\\
		\cdot \exp\Big\{rt(\nu_n-1)+ \bar{c}\frac{T^3}{n^2w_1}+3r^2t\nu_n \Big\}
		\end{multline}
		for some finite constants $c',\bar{c}>0$.
	\end{lem}
	Consequently, taking $r=1/w_1\asymp n^{-1/(\tau-1)}\ll 1$ throughout \textcolor{black}{and recalling the definition of $\mathbb{E}_{Q_{\beta^*}}(\cdot)$}, we see that the expression in \eqref{hjh} is at most
	\begin{multline}\label{jgj}
	2\exp\Big\{r^2t\mathbb{E}((W^*_n)^2)(1+c'/w_1)-r\nu_n\frac{\tau-2}{\tau-1}\frac{t^2}{2n}\Big\}\cdot\\
	\cdot \exp\Big\{rt(\nu_n-1)+ \bar{c}\big(\frac{T^3}{n^2w_1}\big)+3r^2t\nu_n \Big\}\mathbb{E}\big(e^{rQ_{\beta^*}}|Q_{\beta^*}>0\big).
	\end{multline}
	To bound the second expectation in \eqref{hjh} we argue as follows. Lemma \ref{OurLemma6} states that the (conditional) law of the \textit{overshoot} $S_{\gamma}-H$, given $S_{\gamma}\geq H$, is stochastically dominated by the $\text{Poisson}(w_1)$ distribution. This result also holds for the overshoot $Q_{\beta}-H$. (To see this, it is enough to follow the proof of Lemma \ref{OurLemma6} using $Q_{\beta}=Q_{\beta-1}+|\mathcal{M}_{\beta}|-1$ in place of $S_{\gamma}=S_{\gamma-1}+|\Upsilon_{\gamma}|-1$ together with the fact that $|\mathcal{M}_1|\preccurlyeq \Upsilon_1$.) Clearly Corollary \ref{OurCorollary6} holds too, so that
	\begin{align}\label{opo}
	\nonumber\mathbb{E}\big(e^{rQ_{\beta^*}}|Q_{\beta}>0,\beta \in [k] \big)&=\mathbb{E}\big(e^{rQ_{\beta}}|Q_{\beta}>0,\beta \in [k] \big)\\
	\nonumber&=e^{rH}\mathbb{E}\big(e^{r(Q_{\beta}-H)}|Q_{\beta}>0,\beta \in [k] \big)\\
	\nonumber&=e^{rH}\mathbb{E}\big(e^{r(Q_{\beta}-H)}|Q_{\beta}\geq H,\beta \in [k] \big)\\
	\nonumber&\leq e^{rH}\mathbb{E}\big(e^{rY_{w_1}}\big)\\
	&= e^{rH}e^{w_1(e^r-1)}.
	\end{align}
	Since $r\ll 1$ we can bound  $e^r-1\leq r+r^2$ and hence the expression in \eqref{opo} is at most $e^{w_1r+w_1r^2+rH}$. Since $\{\beta>k\}$ and $\{Q_{\beta}>0,\beta \in [k]\}$ are disjoint events whose union is $\{Q_{\beta^*}>0\}$, and because the (conditional) expected value of $e^{rQ_{\beta^*}}$ given $\beta > k$ is at most $e^{rH}$ \textcolor{black}{(since in this case $Q_{\beta^*}=Q_{k}<H$)}, we conclude that (as $r=1/w_1$)
	\begin{align}\label{51Prime}
	\mathbb{E}\big(e^{rQ_{\beta^*}}|Q_{\beta^*}>0\big)\leq e^{rH}+e^{w_1r+w_1r^2+rH}\leq (1+e^{1+1/w_1})e^{rH}\leq 9e^{rH}
	\end{align}
	provided $n$ is large enough. Therefore, combining \eqref{49Prime}, \eqref{50Prime} and \eqref{51Prime}  we arrive at
	\begin{multline}\label{51Prime2}
	\mathbb{P}\big(|\AAA^{BP}_{\beta^*+j}|>0 \hspace{0.15cm}\forall j\in [t]|Q_{\beta^*}>0\big)\\
	\leq 10\exp\Big\{r^2t\mathbb{E}((W^*_n)^2)(1+c'/w_1)-r\nu_n\frac{\tau-2}{\tau-1}\frac{t^2}{2n}\Big\}\cdot\\
	\cdot \exp\Big\{rt(\nu_n-1)+ \bar{c}\big(\frac{T^3}{n^2w_1}\big)+3r^2t\nu_n \Big\} e^{rH}.
	\end{multline}
	Observe that, taking $H\asymp n^{1/3}$ for $\tau>4$ and $H\asymp n^{1/(\tau-1)}$ for $\tau\in (3,4)$ we see that $e^{rH}=O(n^{\frac{\tau-4}{3(\tau-1)}})$ when $\tau>4$, whereas $e^{rH}=O(1)$ if $3<\tau<4$. Moreover, $T\sim An^{2/3}$ for $\tau>4$ while $T\sim An^{\frac{\tau-2}{\tau-1}}$ for $3<\tau<4$ and consequently we can write
	$$
	\bar{c}\Big(\frac{T^3}{n^2w_1}\Big)=
	\begin{cases}
	O(1), & \text{ if } \tau>4 \text{ and } A=O\big(n^{\frac{(\tau-4)\wedge 1}{3(\tau-1)}}\big)\\
	O(1), & \text{ if } 3<\tau<4 \text{ and } A=O\big(n^{\frac{5-\tau}{3(\tau-1)}}\big)\\
	\end{cases}
	$$
	Using these estimates in \eqref{51Prime2} and recalling that $r=1/w_1\asymp n^{-1/(\tau-1)}$ we see that, for all large enough $n$, 
	\begin{multline*}\label{kklll}
	\nonumber \mathbb{P}\big(|\AAA^{BP}_{\beta^*+j}|>0 \hspace{0.15cm}\forall j\in [t]|Q_{\beta^*}>0\big)\leq\\
	C'\exp\Big\{r^2t\Big(\mathbb{E}((W^*_n)^2)+3\nu_n + c\frac{\mathbb{E}((W^*_n)^2)}{n^{1/(\tau-1)}}\Big)-\frac{\tau-2}{\tau-1}\frac{t^2r\nu_n}{2n}+rt(\nu_n-1)+rH\Big\}
	\end{multline*}
	for some finite constant $C'>0$. Now, setting
	\begin{equation*}
	f(r,t)\coloneqq
	r^2t\Big(\mathbb{E}((W^*_n)^2)+3\nu_n + c\frac{\mathbb{E}((W^*_n)^2)}{n^{1/(\tau-1)}}\Big)-r\nu_n\frac{\tau-2}{\tau-1}\frac{t^2}{2n}+rt(\nu_n-1)+rH,
	\end{equation*}
	we see that the derivative (with respect to $r$) of $f(r,t)$ vanishes if, and only if, 
	\begin{align*}
	r=r_0\coloneqq \frac{\nu_n\frac{\tau-2}{\tau-1}\frac{t^2}{2n}-t(\nu_n-1)-H}{2t\Big(\mathbb{E}((W^*_n)^2)+3\nu_n + c\frac{\mathbb{E}((W^*_n)^2)}{n^{1/(\tau-1)}}\Big)}.
	\end{align*}
	Since the second derivative of $f(r,t)$ with respect to $r$ is always positive, the value $r_0$ indeed minimizes $f(r,t)$. 
	
	Therefore
	\begin{align*}
	f(r,t)\geq f(r_0,t)=-\frac{\Big(\nu_n\frac{\tau-2}{\tau-1}\frac{t^2}{2n}-t(\nu_n-1)-H\Big)^2}{4t\Big(\mathbb{E}((W^*_n)^2)+3\nu_n + c\frac{\mathbb{E}((W^*_n)^2)}{n^{1/(\tau-1)}}\Big)} 
	\end{align*}
	Let $t=T-k$. When $\tau>4$ take $k=H^2$ where $H=\lfloor{n^{1/3}\rfloor}$ and $T=\lfloor{An^{2/3}\rfloor}$, with $A=O\big(n^{\frac{(\tau-4)\wedge 1}{3(\tau-1)}}\big)$. Using (\ref{nun1}), (\ref{Ewnstart}) and since $n^{2/3}(A-2)<t<n^{2/3}(A-1/2)$ and $\nu_n\frac{\tau-2}{\tau-1}\frac{t^2}{2n}-t(\nu_n-1)-H>0$ for $A>2$,
	then we obtain
	\begin{multline*}
	\frac{\Big(\nu_n\frac{\tau-2}{\tau-1}\frac{t^2}{2n}-t(\nu_n-1)-H\Big)^2}{4t\Big(\mathbb{E}((W^*_n)^2)+3\nu_n + c\frac{\mathbb{E}((W^*_n)^2)}{n^{1/(\tau-1)}}\Big)}\\ 
	\geq \frac{\Big((1-C_1n^{-\frac{\tau-3}{\tau-1}})(\frac{\tau-2}{\tau-1})\frac{n^{4/3}(A-2)^2}{2n}-n^{2/3}(A-1/2)C_1n^{-\frac{\tau-3}{\tau-1}}-n^{1/3}\Big)^2}{4n^{2/3}(A-1/2)\Big(\frac{\mathbb{E}((W^3))}{\mathbb{E}((W))}+4\Big)}
	\end{multline*}
	\textcolor{black}{which, for $n$ large enough,  is greater than 
		\[\frac{\Big(\big(\frac{\tau-2}{\tau-1}\big)\frac{(A-2)^2}{2}-2\Big)^2}{4(A-1/2)\Big(\frac{\mathbb{E}(W^3)}{\mathbb{E}(W)}+4\Big)}.\]}
	Since 
	\begin{equation*}
	\Big(\Big(\frac{\tau-2}{\tau-1}\Big)\frac{(A-2)^2}{2}-2\Big)^2> \Big(\Big(\frac{\tau-2}{\tau-1}\Big)\frac{A}{2}\Big(\frac{A}{2}-2\Big)\Big)^2
	\end{equation*}
	for $A> (4+\sqrt{32})/2$, and $(A/2-2)/(A-1/2)>1/4$ for $A>8$, when $n$ is large enough and $A>8$ we obtain
	\begin{equation*}
	e^{f(r_0,T-k)}\leq \exp\bigg\{-\frac{\big(\frac{\tau-2}{\tau-1}\big)^2 A^2(A-4)}{128\big(\frac{\mathbb{E}(W^3)}{\mathbb{E}(W)}+4\big)}\bigg\}.
	\end{equation*}
	When $\tau\in (3,4)$, let $k= H^{\tau-2}$ where $H=\lfloor{ n^{1/(\tau-1)}\rfloor}$ and $T=\lfloor{ An^{\frac{\tau-2}{\tau-1}}\rfloor}$, with $A=O\big(n^{\frac{5-\tau}{3(\tau-1)}}\big)$. Using \eqref{nun1}, \eqref{46Prime}, \eqref{lempositive} and since $n^{\frac{\tau-2}{\tau-1}}(A-2)<t<n^{\frac{\tau-2}{\tau-1}}(A-1/2)$, for $A>2$ we obtain, expanding the squared term at the numerator, 
	\begin{multline*}
	\frac{\Big(\nu_n\frac{\tau-2}{\tau-1}\frac{t^2}{2n}-t(\nu_n-1)-H\Big)^2}{4t\Big(\mathbb{E}((W^*_n)^2)+3\nu_n + c\frac{\mathbb{E}((W^*_n)^2)}{n^{1/(\tau-1)}}\Big)} \\
	\geq \frac{\nu_n^2\big(\frac{\tau-2}{\tau-1}\big)^2\frac{t^4}{4n^2}+\nu_n\big(\frac{\tau-2}{\tau-1}\big)\frac{t^3}{n}(1-\nu_n)+H^2+t^2(1-\nu_n)^2-2t(1-\nu_n)H-2\nu_n\big(\frac{\tau-2}{\tau-1}\big)\frac{t^2}{2n}H}{4n^{\frac{\tau-2}{\tau-1}}(A-1/2)(C_3n^{\frac{4-\tau}{\tau-1}}+4)},
	\end{multline*}
	which for $n$ large enough and $A>8$ is greater than $\frac{A(A-2\tau)}{4(\tau-1)^2(A-1/2)(C_3+1)}$. Consequently we obtain, for  $8<A=O\big(n^{\frac{5-\tau}{3(\tau-1)}}\big)$ and for all large enough $n$, 
	\begin{equation*}
	\frac{\Big(\nu_n\frac{\tau-2}{\tau-1}\frac{t^2}{2n}-t(\nu_n-1)-H\Big)^2}{4t\Big(\mathbb{E}((W^*_n)^2)+3\nu_n + c\frac{\mathbb{E}((W^*_n)^2)}{n^{1/(\tau-1)}}\Big)}>\frac{A-2\tau}{4(\tau-1)^2(C_3+1)},
	\end{equation*}
	Thus we can bound 
	\begin{equation*}
	e^{f(r_0,T-k)}\leq \exp\Big\{-\frac{A-2\tau}{4(\tau-1)^2(C_3+1)}\Big\}.
	\end{equation*}
	Therefore, for $\tau>4$ we arrive at 
	\begin{equation}\label{Ptau4}
	\mathbb{P}\big(|\AAA^{BP}_{\beta^*+j}|>0 \hspace{0.15cm}\forall j\in [T-k]|Q_{\beta^*}>0\big)\\
	\leq C'\exp\Big\{-\frac{\big(\frac{\tau-2}{\tau-1}\big)^2A^2(A-4)}{128\big(\frac{\mathbb{E}(W^3)}{\mathbb{E}(W)}+4\big)}\Big\},
	\end{equation}
	whereas for $3<\tau<4$ we have
	\begin{align}\label{Ptau34}
	\mathbb{P}\Big(|\AAA^{BP}_{\beta^*+j}|>0 
	\hspace{0.15cm}\forall j\in [T-k]|Q_{\beta^*}>0\Big)
	\leq C''\exp\Big\{-\frac{A-2\tau}{4(\tau-1)^2(C_3+1)}\Big\}.
	\end{align}
	Note that, since $|\AAA^{BP}_{t}|\leq  Q_{t}$ for \textcolor{black}{ $0<t\leq \beta^*$, when $T>k$ ($\geq \beta^*$)} we obtain
	\begin{multline*}
	\mathbb{P}\big(|\CC(V_n)|>T)\leq \mathbb{P}(|\AAA^{BP}_{\beta^*}|>0, |\AAA^{BP}_{\beta^*+j}|>0, \forall j\in [T-k]\big)\\
	\leq \mathbb{P}(Q_{\beta^*}>0, |\AAA^{BP}_{\beta^*+j}|>0, \forall j\in [T-k]\big)\\
	=\mathbb{P}(Q_{\beta^*}>0)\mathbb{P}\big(|\AAA^{BP}_{\beta^*+j}|>0 \hspace{0.15cm}\forall j\in [T-k]|Q_{\beta^*}>0\big),
	\end{multline*}
	\textcolor{black}{and the second probability on the right-hand side of the last expression is bounded from above in (\ref{Ptau4}) and (\ref{Ptau34}) for the cases $\tau>4$ and $\tau\in(3,4)$, respectively.}
	
	To complete the proof, we thus need an upper bound for $\mathbb{P}(Q_{\beta^*}>0)$. To this end, we use Lemma \ref{upperbound}, in which we have established an upper bound for $\mathbb{P}(S_{\gamma^*}>0)$ with $S_t$ being a random walk with independent increments having distribution $\text{Poi}(w_{M_i})-1$, where the random variables $M_i$ are independent with distribution $M$ as in \eqref{M}. In particular, we now construct such a process $S_t$ \textit{starting} from the random variables $|\mathcal{M}_i|$, in such a way that $\mathbb{P}(Q_{\beta^*}>0)\leq \mathbb{P}(S_{\gamma^*}>0)$. To this end, recall that $|\mathcal{M}_1|$ is a random variable with the mixed $\text{Poi}(w_{J_0})$ distribution \textcolor{black}{(where $J_0$ is uniformly distributed on $[n]$)}. Thanks to our discussion prior to the statement of Lemma \ref{upperbound} we know that, if $Y_1$ is a random variable with the $\text{Poi}(w_{M})$ distribution, then there is a coupling $(D_1,\widehat{Y}_1)$ of $|\mathcal{M}_1|$ and $Y_1$ such that $D_1\leq \widehat{Y}_1$ almost surely. For $i\geq 2$, let $\widehat{|\mathcal{M}|}_i$ be independent copies of the $|\mathcal{M}_i|$, defined on the \textit{same} probability space where both $D_1$ and $\widehat{Y}_1$ are defined. Set $\widehat{Q}_0\coloneqq 1$ and $\widehat{Q}_i\coloneqq \widehat{Q}_{i-1}+\widehat{D}_i-1$ for $i\geq 1$, where $\widehat{D}_1\coloneqq D_1$ and $\widehat{D}_i\coloneqq \widehat{|\mathcal{M}|}_i$ for $i\geq 2$. Moreover, we set $S_0\coloneqq 1$ and $S_i\coloneqq S_{i-1}+\Upsilon_i-1$ for $i\geq 1$, where $\Upsilon_1\coloneqq \widehat{Y}_1$ and $\Upsilon_i\coloneqq \widehat{|\mathcal{M}|}_i$ for $i\geq 2$. Define $\gamma$ to be the first time $t\geq 1$ at which either $S_t=0$ or $S_t\geq H$, and similarly define $\widehat{\beta}$ to be the first time $t\geq 1$ at which either $\widehat{Q}_t=0$ or $\widehat{Q}_t\geq H$. Let $\gamma^*\coloneqq \gamma \wedge k$ and $\widehat{\beta}^*\coloneqq \widehat{\beta}\wedge k$. Note that, almost surely, $\widehat{Q}_t\leq S_t$ for every $t\in \mathbb{N}_0$, because $S_1=\Upsilon_1=\widehat{Y}_1\geq D_1=\widehat{D}_1=\widehat{Q}_1$ almost surely and for $t\geq 2$ we have that
	\[S_t=\Upsilon_1+\sum_{i=2}^{t}(\Upsilon_i-1)\geq \widehat{D}_1+ \sum_{i=2}^{t}(\Upsilon_i-1)=\widehat{D}_1+ \sum_{i=2}^{t}(\widehat{D}_i-1)=\widehat{Q}_t.\]
	Moreover, $Q_{\beta^*}$ has the same distribution as $\widehat{Q}_{\widehat{\beta}^*}$. Therefore we can write
	\begin{align}\label{starlab}
	\nonumber\mathbb{P}(Q_{\beta^*}>0)=\mathbb{P}(\widehat{Q}_{\widehat{\beta}^*}>0)&=\mathbb{P}(\widehat{Q}_{k}>0, \widehat{\beta}>k)+\mathbb{P}(\widehat{Q}_{\widehat{\beta}}>0, \widehat{\beta}\leq k)\\
	&=\mathbb{P}(\widehat{\beta}>k)+\mathbb{P}(\widehat{Q}_{\widehat{\beta}}>0, \widehat{\beta}\leq k).
	\end{align}
	We claim that 
	\begin{equation}\label{claimq1}
	\mathbb{P}(\widehat{\beta}>k)=\mathbb{P}(\widehat{\beta}>k,S_{\gamma^*}>0).
	\end{equation}
	To see this, suppose that $\widehat{\beta}>k$ and $S_{\gamma^*}=0$. Since $\widehat{\beta}>k$, then $\widehat{Q}_t\in (0,H)\cap \mathbb{N}$ for all $t\leq k$ and so in particular $S_t>0$ for all $t\leq k$. If $\gamma\leq k$ then we obtain $0<S_{\gamma}=S_{\gamma^*}=0$, a contradiction. Similarly, if $\gamma>k$ then we obtain $0<S_k=S_{\gamma^*}=0$, also a contradiction. Therefore  $\mathbb{P}(\widehat{\beta}>k,S_{\gamma^*}=0)=0$,  proving the claim. Next we claim that
	\begin{equation}\label{claimq2}
	\mathbb{P}(\widehat{Q}_{\widehat{\beta}}>0, \widehat{\beta}\leq k)=\mathbb{P}(\widehat{Q}_{\widehat{\beta}}>0, \widehat{\beta}\leq k, S_{\gamma^*}>0).
	\end{equation}
	To see this, suppose that \textcolor{black}{$\widehat{Q}_{\widehat{\beta}}>0, \widehat{\beta}\leq k$ and $S_{\gamma^*}=0$. Since $\widehat{Q}_{\widehat{\beta}}>0$, by definition of $\widehat{\beta}$ we must have $\widehat{Q}_{\widehat{\beta}}\geq H$}. This implies that $S_{\widehat{\beta}}\geq H$, and hence $\gamma\leq \widehat{\beta}$. Also, since $\widehat{Q}_t>0$ for all $t<\widehat{\beta}$ and $\widehat{Q}_{\widehat{\beta}}>0$, we must have $S_{\gamma}\geq \widehat{Q}_{\gamma}>0$ and so (by definition of $\gamma$) we get $S_{\gamma}\geq H$. Since $\gamma\leq \widehat{\beta}$ and $\widehat{\beta}\leq k$, it follows that $\gamma \leq k$. Therefore $S_{\gamma^*}=S_{\gamma}\geq H$, which contradicts the initial assumption that $S_{\gamma^*}=0$, thus proving the claim. It follows from (\ref{starlab}), \eqref{claimq1} and \eqref{claimq2} that
	\begin{multline*}
	\mathbb{P}(Q_{\beta^*}>0)=\mathbb{P}(\widehat{\beta}>k,S_{\gamma^*}>0)+\mathbb{P}(\widehat{Q}_{\widehat{\beta}}>0, \widehat{\beta}\leq k, S_{\gamma^*}>0)\\
	\leq \mathbb{P}(\widehat{\beta}>k,S_{\gamma^*}>0)+\mathbb{P}(\widehat{\beta}\leq k, S_{\gamma^*}>0)=\mathbb{P}(S_{\gamma^*}>0).
	\end{multline*}
	By Lemma \ref{upperbound}, \eqref{44Prime} and
	taking $k=H^2$ when $\tau>4$ we obtain that $\mathbb{P}(Q_{\beta^*}>0)= O(1/H)$. Consequently there is a finite constant $C>0$ \textcolor{black}{(which depends on $c_F$ and $\tau$} such that
	
	\begin{equation*}
	\mathbb{P}(|\CC_{\max}|>T)\leq \frac{nC}{TH}\exp\bigg\{-\frac{\big(\frac{\tau-2}{\tau-1}\big)^2 A^2(A-4)}{128\big(\frac{\mathbb{E}(W^3)}{\mathbb{E}(W)}+4\big)}\bigg\}\\
	\leq \frac{c_5}{A}\exp\bigg\{-\frac{\big(\frac{\tau-2}{\tau-1}\big)^2 A^2(A-4)}{128\big(\frac{\mathbb{E}(W^3)}{\mathbb{E}(W)}+4\big)}\bigg\}
	\end{equation*}
	for some finite constant $c_5>0$ \textcolor{black}{(which depends on $c_F$ and $\tau$)}. \textcolor{black}{Similarly, by 
		Lemma \ref{upperbound}, \eqref{48}, and 
		taking $k= H^{\tau-2}\asymp n^{\frac{\tau-2}{\tau-1}}$ when $\tau\in (3,4)$ we obtain $\mathbb{P}(Q_{\beta^*}>0)= O(1/H)$. Consequently there are finite constants $c_6,c_7>0$ which depend on $c_F$ and $\tau$ such that
		\begin{equation*}
		\mathbb{P}(|\CC_{\max}|>T)\leq \frac{c_6}{A}\exp\Big\{- \frac{A-2\tau}{4(\tau-1)^2(C_3+1)}\Big\},
		\end{equation*}
		completing the proof of the theorem.}

	\subsection{Proof of Theorems \ref{teo1taugreater4} and \ref{teo2tauin34} --  the probability of small maximal components}
	To prove the results of this section we use \textbf{Alg.2} and \textbf{Alg.2.BP} to establish the bound for the case $\tau>4$, whereas we use \textbf{Alg.3} and \textbf{Alg.3.BP} to handle the case $\tau\in (3,4)$. That is, when $\tau>4$ we start the exploration process from a node (resp. mark) selected with probability proportional to its weight, i.e. $V_n=i$ (resp. $J_0=i$) with probability $w_i/l_n$ for $i\in [n]$, whereas when $\tau\in (3,4)$ we (deterministically) start the procedure from vertex $V_n=1$ (resp. mark $J_0=1$). In a moment we will explain why it is actually useful to start the exploration processes in different ways for the two regimes $\tau>4$ and $\tau\in (3,4)$.
	
	Recall that our goal here is to show that, when $\tau>4$, a largest component is unlikely to contain less than $n^{2/3}/A$ vertices; similarly we prove that, if $3<\tau<4$, then a largest component is unlikely to contain less than $n^{\frac{\tau-2}{\tau-1}}/A$ nodes.
	
	Let $T_2=T_2(n)\in \mathbb{N}$. By Proposition \ref{newnewprop}, independently of the way we choose the vertex from which to start the exploration process, we can write
	\begin{equation}\label{impidforlower}
	\mathbb{P}\left(|\CC_{\max}|<T_2\right)=\mathbb{P}\left(\tau_j-\tau_{j-1}<T_2\hspace{0.15cm}\forall j\right),
	\end{equation}
	where $\tau_0=0$ and $(\tau_j:j\geq 1)$ are the ordered times \textcolor{black}{(prior to the termination of the procedure)} at which the set of active marks becomes empty.

	Let $T_1=T_1(n)\in \mathbb{N}$. Following  \cite{NP,NCHperc}, 
	the idea is to prove that, with sufficiently high probability, 
	the process $|\AAA^{BP}_t|$ reaches some (high) level $h=h(n)$ before time $T_1$ and then it remains positive for at least $T_2$ steps.
	
	Intuitively, if we want this strategy to be successful, we need $h$ to be substantially larger than $\sqrt{T_2}$, so that for \textcolor{black}{the process of active marks} 
	(which, in some sense, it behaves like a mean-zero, integer-valued random walk) started at height $h$ it becomes indeed likely to remain positive for $T_2$ steps. It is at this stage that it becomes useful to work with the two procedures \textbf{Alg.2.BP} and \textbf{Alg.3.BP} for the cases $\tau>4$ and $\tau\in (3,4)$, respectively. 
	
	Indeed, let us start by considering the case $\tau\in (3,4)$. In this regime, the mark $J_0$ from which we start the exploration process is (deterministically) chosen to be vertex $1$. By Lemma \ref{maxw}, $w_i=(nc_F/i)^{1/(\tau-1)}$ and hence, since $l_n\asymp n$, it follows that at the end of the first step in the procedure we expect to have approximately 
	\begin{equation*}
	\sum_{j=2}^{n}\Big(1-e^{-w_1 w_j/l_n}\Big)\approx w_1\sum_{j=2}^{n}\frac{w_j}{l_n}= w_1\Big(1 - \frac{w_1}{l_n}\Big)\asymp n^{\frac{1}{\tau-1}}(1-o(1))\sim n^{\frac{1}{\tau-1}} 
	\end{equation*}
	active marks (which correspond to the nodes directly connected to $V_n=1$). In this regime (i.e. when $\tau\in (3,4)$) we have that
	\[n^{\frac{1}{\tau-1}}\gg n^{\frac{\tau-2}{2(\tau-1)}}=\sqrt{n^{\frac{\tau-2}{\tau-1}}}\]
	and therefore, taking $h=n^{\frac{1}{\tau-1}}$ and $T_2=\lceil{n^{\frac{\tau-2}{\tau-1}}/A\rceil}$, we do have that $h$ is much larger than $\sqrt{T_2}$. This means that, after \textit{one step} only, our process already reached a height which is sufficient to guarantee that it will remain positive for $T_2$ steps. 
	
	In other words, taking $T_1=2$ and $h,T_2$ as above, we can indeed show that our process reaches level $h$ at time $t=1<T_1$ and then remains positive for $T_2$ steps. This approach, however, can't work for the case $\tau>4$ (unless we make unpleasant assumptions on $A$ of the type $A=A(n)\geq n^{\frac{2(\tau-4)}{3(\tau-1)}}$). Indeed, when $\tau>4$, since $n^{\frac{1}{\tau-1}}\ll n^{1/3}=\sqrt{n^{2/3}}$, it becomes unlikely that our process remains positive for $T_2= \lceil n^{2/3}/A \rceil$ steps after having reached height $h\asymp n^{\frac{1}{\tau-1}}$ in one step. 
	
	In other words, when $\tau>4$ it is not sufficient to analyse the component of vertex $1$ to draw conclusions on $|\CC_{\max}|$; to do this, we need to explore the components of multiple vertices and, in this setting, it is convenient that the nodes from which we start exploring new components are selected from the set of unexplored nodes with probability proportional to their weights. We then need to perform two separate analysis for the cases $\tau>4$ and $\tau\in (3,4)$.

	In particular, following our previous discussion, we let $h\in \mathbb{N}$ be some positive integer and bound, for the case $\tau>4$,
	\begin{align}\label{twoprob}
	\nonumber\mathbb{P}(|\mathcal{C}_{\max}|<T_2)&=\mathbb{P}\left(\tau_j-\tau_{j-1}<T_2\hspace{0.15cm}\forall j\right)\\
	\nonumber&\leq \mathbb{P}\left(|\AAA^{BP}_t|<h\hspace{0.15cm}\forall t\in [T_1-1]\right)\\
	&+\mathbb{P}\left(\tau_j-\tau_{j-1}<T_2\hspace{0.15cm}\forall j,\exists t\in [T_1-1]:|\AAA^{BP}_t|\geq h\right),
	\end{align}
	while for the case $\tau \in (3,4)$ we write
	\begin{align}\label{twoprob34}
	\nonumber\mathbb{P}(|\CC_{\max}|<T_2)&\leq \mathbb{P}(|\mathcal{C}_1|<T_2)=\mathbb{P}(\tau_1<T_2)\\
	&\leq \mathbb{P}(|\mathcal{A}^{BP}_1|< h)+\mathbb{P}(\tau_1<T_2, |\mathcal{A}^{BP}_1|\geq h),
	\end{align}
	where we recall that $\mathcal{C}_1$ is the first component to be explored in $NR_n(\textbf{w})$ (the component of node $1$).

	The probabilities on the right-hand sides of \eqref{twoprob} and \eqref{twoprob34} are bounded in separate ways, specifically by means of Propositions \ref{prop1alltau} and \ref{prop2alltau} below for the case $\tau>4$, while using Propositions \ref{prop134} and \ref{prop234} when $\tau \in (3,4)$. Before stating such results, however, we recall a few useful estimates from previous sections that we use again here.
	
	From Proposition \ref{ordersize} we know that, whenever $n$ is sufficiently large,
	\begin{equation}\label{oneminusnun}
	|1-\nu_n|\leq C_1 n^{-\frac{\tau-3}{\tau-1}}
	\end{equation}
	for some finite constant $C_1>0$ which depends on $c_F$ and $\tau$; moreover, if $\tau>4$, we also have that
	\begin{equation}\label{secmomsizebiasedtau4}
	\left|\mathbb{E}((W^*_n)^2) - \frac{\mathbb{E}(W^3)}{\mathbb{E}(W)} \right|=O\big(n^{-\frac{\tau-4}{\tau-1}}\big).
	\end{equation}
	We also recall from \eqref{46Prime} that, when $\tau\in (3,4)$, we can bound
	\begin{equation}\label{secmomsizebiasedtau34}
	C_2n^{\frac{4-\tau}{\tau-1}}\leq \mathbb{E}((W^*_n)^2) \leq C_3n^{\frac{4-\tau}{\tau-1}}
	\end{equation}
	for all large enough $n$, with $C_2,C_3>0$ two finite constants which depend on $c_F$ and $\tau$. 
	
	\begin{prop}\label{prop1alltau}
		Let $\tau>4$ and set $T_1\coloneqq \lfloor n^{2/3}/A^{1/4}\rfloor$, $h\coloneqq \lfloor n^{1/3}/A^{1/4}\rfloor$. Then, for all large enough $n$, we have that
		\begin{equation*} 
		\mathbb{P}\big(|\AAA^{BP}_t|<h\hspace{0.15cm}\forall t\in [T_1-1]\big)\leq \frac{C'}{A^{1/4}},
		\end{equation*}
		where $C'>0$ is some finite constant which depends on $c_F$ and $\tau$.
	\end{prop}

	\begin{prop}\label{prop2alltau}
		Let $\tau>4$ and set $T_1\coloneqq \lfloor n^{2/3}/A^{1/4} \rfloor$, $h\coloneqq \lfloor n^{1/3}/A^{1/4}\rfloor$ and $T_2\coloneqq \lceil n^{2/3}/A \rceil$. Then, for all large enough $n$, we have that
		\begin{equation*}
		\mathbb{P}\big(\tau_j-\tau_{j-1}<T_2\hspace{0.15cm}\forall j,\exists t\in [T_1-1]:|\AAA^{BP}_t|\geq h\big)\leq \frac{C}{A^{1/2}},
		\end{equation*}
		where $C>0$ is some finite constant which depends on $c_F$ and $\tau$.
	\end{prop}

	\begin{prop}\label{prop134}
		\textcolor{black}{Let $\tau\in (3,4)$ and set $h\coloneqq \lfloor \delta n^{1/(\tau-1)}\rfloor$, with $\delta>0$ some sufficiently small (fixed) quantity. Let $T_2=\lceil n^{\frac{\tau-2}{\tau-1}}/A\rceil$. Then, for all large enough $n$, we have that
			\begin{equation*}
			\mathbb{P}(|\mathcal{A}^{BP}_1|< h)\leq \frac{C'}{n^{\frac{\tau-2}{\tau-1}}},
			\end{equation*}
			for some finite constant $C'=C'(\delta)>0$ which also depends on $c_F$ and $\tau$.}
	\end{prop}

	\begin{prop}\label{prop234}
		Let $\tau\in (3,4)$ and set $h\coloneqq \lfloor \delta n^{1/(\tau-1)}\rfloor$, with  $\delta>0$ some sufficiently small (fixed) quantity. Let $T_2=\lceil n^{\frac{\tau-2}{\tau-1}}/A\rceil$. Then, for all large enough $n$, we have that
		\begin{equation*}
		\mathbb{P}(\tau_1<T_2, |\mathcal{A}^{BP}_1|\geq h)\leq \frac{C}{A},
		\end{equation*}
		for some finite constant $C=C(\delta)>0$ which also depends on $c_F$ and $\tau$.
	\end{prop}

	We are now in the position to establish the upper bounds for the probability of observing unusually small components stated in Theorems \ref{teo1taugreater4} and \ref{teo2tauin34}. Indeed, when $\tau>4$ it follows from \eqref{twoprob} together with Propositions \ref{prop1alltau} and \ref{prop2alltau} that
	\begin{equation*}
	\mathbb{P}(|\mathcal{C}_{\max}|<n^{2/3}/A)\leq \mathbb{P}(|\mathcal{C}_{\max}|<T_2)
	\leq \frac{c_2}{A^{1/4}}
	\end{equation*}
	for some constant $c_2>0$ which depends on $c_F$ and $\tau$. On the other hand, when $3<\tau<4$, it follows from \eqref{twoprob34} together with Propositions \ref{prop134} and \ref{prop234} that
	\begin{equation*}
	\mathbb{P}(|\mathcal{C}_{\max}|<n^{\frac{\tau-2}{\tau-1}}/A)\leq \mathbb{P}(|\mathcal{C}_{\max}|<T_2)
	\leq c_4(A^{-1}\vee n^{-\frac{\tau-2}{\tau-1}}),
	\end{equation*}
	for some constant $c_4>0$ which depends on $c_F$ and $\tau$. Note that, without loss of generality, we can assume that $A<n^{\frac{\tau-2}{\tau-1}}$ \textcolor{black}{(otherwise the probability on the left-hand side of the last display would be zero)} and hence the expression on the right hand side of the last inequality is $c_4/A$, as required.
	
	Before starting with the actual proofs of the above propositions, we establish a technical lemma which we will need throughout. We remind the reader that $X_{v_t}=|\mathcal{M}_t|$ is the number of children of node $v_t$ in the exploration of the branching process trees.
	
	\begin{lem}\label{implemmaforlowerbound} 
		Let 
		$T=T(n)=o(n)$ and set 
		\begin{equation}\label{defincr}
		I_i\coloneqq |\mathcal{M}_i|-|\widetilde{\mathcal{M}}_i|=\sum_{l=1}^{X_{v_i}}\mathbbm{1}_{\big\{J^{v_i}_l\in (\mathcal{A}^{BP}_{i-1}\cup \{m^{BP}_i\})\cup \mathcal{E}^{BP}_{i-1}\cup \mathcal{L}^{v_i}_{l-1}\big\}},
		\end{equation}
		for $1\leq i\leq T$. Then
		\begin{equation}
		\mathbb{E}(I_1)=O(w^2_1/n) \text{ when }\tau\in (3,4) \text{ and }\mathbb{E}(I_1)=O(1/n) \text{ when }\tau>4.
		\end{equation}
		Moreover, if $i\geq 2$ and $\tau>3$, we have 
		\begin{equation*}
		\mathbb{E}(I_i)=O\Big(\frac{w_1\vee i \vee \mathbb{E}((W^*_n)^2)}{n} \Big).
		\end{equation*}
	\end{lem}
	
	\begin{proof} 
		Suppose first that $i=1$ and $\tau\in (3,4)$. Recall that, in this regime, we start exploring a branching process tree whose root carries the deterministic mark $J_0=1$. Consequently, the random variable $X_{v_1}$ (that corresponds to the random number of children of the root node) has the Poisson distribution with parameter $w_1$. Now observe that, since $\mathcal{A}^{BP}_0=\{1\}$ and $\mathcal{E}^{BP}_0=\emptyset$, we have
		\begin{multline}\label{case134}
		I_1=\sum_{l=1}^{X_{v_1}}\mathbbm{1}_{\big\{J^{v_1}_l\in \{1\}\cup \mathcal{L}^{v_1}_{l-1}\big\}}=\sum_{k\geq 1}\mathbbm{1}_{\{X_{v_1}=k\}}\sum_{l=1}^{k}\mathbbm{1}_{\big\{J^{v_1}_l\in \{1\}\cup \mathcal{L}^{v_1}_{l-1}\big\}}\\
		\leq \sum_{k\geq 1}\mathbbm{1}_{\{X_{v_1}=k\}}\sum_{l=1}^{k}\mathbbm{1}_{\big\{J^{v_1}_l\in \{1\}\big\}}+\sum_{k\geq 1}\mathbbm{1}_{\{X_{v_1}=k\}}\sum_{l=1}^{k}\mathbbm{1}_{\big\{J^{v_1}_l\in \mathcal{L}^{v_1}_{l-1}\big\}}.
		\end{multline}
		Recalling that the $J^{v_1}_l$ are i.i.d. with distribution $M$ given in (\ref{M}) we have 
		\[\mathbb{P}(J^{v_1}_l\in \mathcal{L}^{v_1}_{l-1})=\sum_{j=1}^{n}\frac{w_j}{l_n}(1-\mathbb{P}(J^{v_1}_r\neq j \text{ }\forall r\leq l-1))=\sum_{j=1}^{n}\frac{w_j}{l_n}\Big[1-\big(1-\frac{w_j}{l_n}\big)^{l-1}\Big] \]
		and hence, after taking expectation on both sides of (\ref{case134}) we obtain (since $\mathbb{P}(J^{v_1}_l\in \{1\})=w_1l^{-1}_n$)
		\begin{equation*}
		\mathbb{E}(I_1)\leq \frac{w_1}{l_n}\sum_{k\geq 1}\mathbb{P}(X_{v_1}=k)k+\sum_{k\geq 1}\mathbb{P}(X_{v_1}=k)\sum_{l=1}^{k}\sum_{j=1}^{n}\frac{w_j}{l_n}\Big(1-\big(1-\frac{w_j}{l_n}\big)^{l-1}\Big).
		\end{equation*}
		Since $(1-w_j/l_n)^{l-1}\geq 1-(l-1)w_j/l_n$ (and $X_{v_1}$ has the Poisson law with mean $w_1$) a short computation shows that
		\begin{equation*}
		\mathbb{E}(I_1)\leq \frac{w^2_1}{l_n}+\frac{\nu_n}{2l_n}(w^2_1+w_1)=O(w^2_1/l_n)=O(w^2_1/n),
		\end{equation*}
		where for the last identity we have used that $l_n\asymp n$. The previous expression establishes the lemma for the case $i=1,\tau\in (3,4)$. Hence, in the remainder of the proof, we assume that either $i\geq 2$ and $\tau>3$, or $i=1$ and $\tau>4$. Let's consider the former case first; that is, we let $i\geq 2$ and $\tau>3$. Denote by $\mathcal{F}^{BP}_{i}$ the $\sigma$-algebra collecting all the information revealed by the exploration process of the branching process trees until the end of step $i$, with $\mathcal{F}^{BP}_0$ being the trivial $\sigma$-field. Note that, by definition of $I_i$, we have
		\begin{equation}\label{dec}
		I_i	\leq \sum_{l=1}^{X_{v_i}}\mathbbm{1}_{\big\{J^{v_i}_l\in (\mathcal{A}^{BP}_{i-1}\cup \{m^{BP}_i\})\cup \mathcal{E}^{BP}_{i-1}\big\}}+\sum_{l=1}^{X_{v_i}}\mathbbm{1}_{\big\{J^{v_i}_l\in  \mathcal{L}^{v_i}_{l-1}\big\}}.
		\end{equation}
		We start focusing on the first sum appearing on the right-hand side of \eqref{dec} and subsequently we take into account the second sum. From Section \ref{comparisonsection} we know that, if $|\mathcal{A}^{BP}_{i-1}|\geq 1$, then $m^{BP}_i\in \mathcal{A}^{BP}_{i-1}$ and hence 
		\[(\mathcal{A}^{BP}_{i-1}\cup \{m^{BP}_i\})\cup \mathcal{E}^{BP}_{i-1}=\mathcal{A}^{BP}_{i-1}\cup \mathcal{E}^{BP}_{i-1},\]
		while if $|\mathcal{A}^{BP}_{i-1}|=0$ we have
		\[(\mathcal{A}^{BP}_{i-1}\cup \{m^{BP}_i\})\cup \mathcal{E}^{BP}_{i-1}=\{m^{BP}_i\}\cup \mathcal{E}^{BP}_{i-1}.\]
		Therefore, when $|\mathcal{A}^{BP}_{i-1}|\geq 1$ we have 
		\begin{equation}\label{case1}
		\sum_{l=1}^{X_{v_i}}\mathbbm{1}_{\big\{J^{v_i}_l\in (\mathcal{A}^{BP}_{i-1}\cup \{m^{BP}_i\})\cup \mathcal{E}^{BP}_{i-1}\big\}}=\sum_{l=1}^{X_{v_i}}\mathbbm{1}_{\big\{J^{v_i}_l\in \mathcal{A}^{BP}_{i-1} \cup \mathcal{E}^{BP}_{i-1}\big\}},
		\end{equation}
		whereas when $|\mathcal{A}^{BP}_{i-1}|=0$ we have 
		\begin{equation}\label{case0}
		\sum_{l=1}^{X_{v_i}}\mathbbm{1}_{\big\{J^{v_i}_l\in (\mathcal{A}^{BP}_{i-1}\cup \{m^{BP}_i\})\cup \mathcal{E}^{BP}_{i-1}\big\}}=\sum_{l=1}^{X_{v_i}}\mathbbm{1}_{\big\{J^{v_i}_l\in \{m^{BP}_i\}\cup \mathcal{E}^{BP}_{i-1}\big\}}.
		\end{equation}
		Thus, when $|\mathcal{A}^{BP}_{i-1}|\geq 1$, we obtain 
		\begin{align*}
		\mathbb{E}\bigg(\sum_{l=1}^{X_{v_i}}\mathbbm{1}_{\big\{J^{v_i}_l\in (\mathcal{A}^{BP}_{i-1}\cup \{m^{BP}_i\})\cup \mathcal{E}^{BP}_{i-1}\big\}}|\mathcal{F}^{BP}_{i-1}\bigg)&=\sum_{k\geq 1}^{}\sum_{l=1}^{k}\mathbb{P}(X_{v_i}=k,J^{v_i}_l\in \mathcal{A}^{BP}_{i-1} \cup \mathcal{E}^{BP}_{i-1}|\mathcal{F}^{BP}_{i-1})\\
		&=\mathbb{E}(\text{Poi}(w_M))\sum_{m=1}^{n}\frac{w_m}{l_n}\mathbb{1}_{\{m\in \mathcal{A}^{BP}_{i-1} \cup \mathcal{E}^{BP}_{i-1}\}}\\
		&=\nu_n\sum_{m=1}^{n}\frac{w_m}{l_n}\mathbb{1}_{\{m\in \mathcal{A}^{BP}_{i-1} \cup \mathcal{E}^{BP}_{i-1}\}}.
		\end{align*}
		For $|\mathcal{A}^{BP}_{i-1}|=0$, observe that
		\begin{equation}\label{seccase}
		\mathbb{E}\bigg( \sum_{l=1}^{X_{v_i}}\mathbbm{1}_{\big\{J^{v_i}_l\in (\mathcal{A}^{BP}_{i-1}\cup \{m^{BP}_i\})\cup \mathcal{E}^{BP}_{i-1}\big\}}|\mathcal{F}^{BP}_{i-1}\bigg)=\sum_{k\geq 1}\sum_{l=1}^{k}\mathbb{P}(X_{v_i}=k,J^{v_i}_l\in \{m^{BP}_{i}\} \cup \mathcal{E}^{BP}_{i-1}|\mathcal{F}^{BP}_{i-1}).
		\end{equation}
		Since $|\mathcal{A}^{BP}_{i-1}|=0$, the random mark $m^{BP}_i$ equals $m\in [n]\setminus \mathcal{E}^{BP}_{i-1}$ with probability $w_m/l'_n(i)$, where we recall that $l'_n(i)= l_n-\sum_{j\in \mathcal{E}^{BP}_{i-1}}w_j$. We have
		\begin{align*}
		\nonumber l'_n(i)=l_n-\sum_{j \in \mathcal{E}^{BP}_{i-1}}^{}w_j \geq l_n-\sum_{j=1}^{i-1}w_j&\geq l_n-\sum_{j=1}^{T-1}w_j\\
		\nonumber&\geq l_n-O\Big(n^{\frac{1}{\tau-1}}\int_{1}^{T}x^{-\frac{1}{\tau-1}}dx\Big)\\
		&=l_n\Big[1-O\Big( \Big(\frac{T}{n}\Big)^{\frac{\tau-2}{\tau-1}}\Big)\Big]=l_n(1-o(1))\asymp n,
		\end{align*}
		where we have used that (by assumption) $i\leq T\ll n$. Moreover, given $m^{BP}_i=m$, we know that $X_{v_i}$ has the $\text{Poi}(w_m)$ distribution and (since $\sum_{m=1}^{n}w^2_m/l_n=\nu_n=O(1)$) a short computation reveals that the expression on the right-hand side of (\ref{seccase}) is at most
		\begin{multline}
		\frac{1}{l'_n(i)}\sum_{m\in [n]\setminus \mathcal{E}^{BP}_{i-1}}^{}\frac{w^3_m}{l_n}+\Big(\sum_{m\in [n]\setminus \mathcal{E}^{BP}_{i-1}}^{}\frac{w^2_m}{l_n}\Big)\Big(\sum_{j=1}^{n}\frac{w_j}{l_n}\mathbb{1}_{\{j\in \mathcal{E}^{BP}_{i-1}\}}\Big)\\
		= O\Big(\frac{\mathbb{E}((W^*_n)^2)}{n}\Big)+O\Big(\sum_{j=1}^{n}\frac{w_j}{l_n}\mathbb{1}_{\{j\in \mathcal{E}^{BP}_{i-1}\}}\Big).
		\end{multline}
		Next we bound the second sum on the right-hand side of (\ref{dec}). Proceeding in a similar way as before (when we considered the case $i=1,\tau>3$), we arrive at
		\begin{align*}
		\mathbb{E}\bigg(\sum_{l=1}^{X_{v_i}}\mathbbm{1}_{\{J^{v_i}_l\in  \mathcal{L}^{v_i}_{l-1}\}}|\mathcal{F}^{BP}_{i-1}\bigg)=O\Big(\frac{\mathbb{E}[X^2_{v_i}|\mathcal{F}^{BP}_{i-1}]}{n}\Big).
		\end{align*}
		The expectation which appears at the numerator in the ratio on the right-hand side of the last expression  equals $\mathbb{E}((W^*_n)^2)$ if $|\mathcal{A}^{BP}_{i-1}|\geq 1$, whereas it is $O(\mathbb{E}((W^*_n)^2))$ if $|\mathcal{A}^{BP}_{i-1}|=0$. 
		All in all, we have shown that when $i\geq 2$ and $\tau>3$, if $|\mathcal{A}^{BP}_{i-1}|\geq 1$ then $\mathbb{E}(I_i|\mathcal{F}^{BP}_{i-1})$ is at most
		\begin{equation*}
		\nu_n\sum_{m=1}^{n}\frac{w_m}{l_n}\mathbb{1}_{\{m\in \mathcal{A}^{BP}_{i-1} \cup \mathcal{E}^{BP}_{i-1}\}}+O\Big(\frac{\mathbb{E}((W^*_n)^2)}{n}\Big)=O\bigg(\sum_{m=1}^{n}\frac{w_m}{l_n}\mathbb{1}_{\{m\in \mathcal{A}^{BP}_{i-1} \cup \mathcal{E}^{BP}_{i-1}\}} \vee \frac{\mathbb{E}((W^*_n)^2)}{n}\bigg);
		\end{equation*}
		similarly, if $|\mathcal{A}^{BP}_{i-1}|=0$ then $\mathbb{E}(I_i|\mathcal{F}^{BP}_{i-1})$ is at most
		\begin{equation*}
		O\bigg(\sum_{j=1}^{n}\frac{w_j}{l_n}\mathbb{1}_{\{j\in \mathcal{E}^{BP}_{i-1}\}}\bigg)+O\Big(\frac{\mathbb{E}((W^*_n)^2)}{n}\Big)=O\bigg(\sum_{m=1}^{n}\frac{w_m}{l_n}\mathbb{1}_{\{m\in \mathcal{E}^{BP}_{i-1}\}} \vee \frac{\mathbb{E}((W^*_n)^2)}{n}\bigg).
		\end{equation*}
		Thus we arrive at
		\begin{equation*}
		\mathbb{E}(I_i)=\mathbb{E}(\mathbb{E}(I_i|\mathcal{F}^{BP}_{i-1}))=O\bigg(\sum_{m=1}^{n}\frac{w_m}{l_n}\mathbb{P}(m\in \mathcal{A}^{BP}_{i-1}\cup \mathcal{E}^{BP}_{i-1}) \vee \frac{\mathbb{E}((W^*_n)^2)}{n}\bigg).
		\end{equation*}
		There remains to bound (from above) the probability that $m$ is in $\mathcal{A}^{BP}_{i-1}\cup \mathcal{E}^{BP}_{i-1}$, where $i\leq T$. There are three ways for $m$ to be either active or explored at the end of step $i-1$ in the exploration of the branching process trees. Indeed: 
		\begin{itemize}
			\item [(a)] either at a step $s\leq i-1$ one of the marks $(J^{v_s}_l:l\in [X_{v_s}])$ assigned to the children of $v_s$ was equal to $m$;
			\item [(b)] or at some step $s\leq i-1$ we had $|\mathcal{A}^{BP}_{s-1}|=0$ and $m^{BP}_s=m$ (meaning that the root of the new tree started at time $s$ received mark $m$);
			\item [(c)] or $J_0=m$.
		\end{itemize}
		The event in (c) has probability $w_m/n$ when we use \textbf{Alg.BP.2} to explore the branching process trees (which occurs when $\tau>4$), whereas when we use \textbf{Alg.BP.3} (which occurs when $3<\tau<4$) it has probability one if $m=1$ (and probability $0$ otherwise). Thus we obtain
		\begin{equation*}
		\sum_{m=1}^{n}\frac{w_m}{l_n}\mathbb{P}(J_0=m)=\frac{w_1}{l_n} \text{ when }\tau\in (3,4) \text{ and }\sum_{m=1}^{n}\frac{w_m}{l_n}\mathbb{P}(J_0=m)=\frac{\nu_n}{l_n}\text{ when } \tau>4,
		\end{equation*}
		so that (since $\nu_n\ll w_1$) we have $\sum_{m=1}^{n}(w_m/l_n)\mathbb{P}(J_0=m)\leq w_1/l_n$ whenever $\tau>3$. Consider the event in (a) next. By a union bound we obtain that
		\begin{align*}
		\mathbb{P}(\exists s\leq i-1:J^{v_s}_l=m \text{ for some }l\in [X_{v_s}])&\leq \sum_{s=1}^{i-1}\mathbb{P}(J^{v_s}_l=m \text{ for some }l\in [X_{v_s}])\\
		&=\sum_{s=1}^{i-1}\sum_{k\geq 1}^{}\mathbb{P}(X_{v_s}=k)\mathbb{P}(J^{v_s}_l=m \text{ for some }l\in [k])\\
		&=\sum_{s=1}^{i-1}\sum_{k\geq 1}^{}\mathbb{P}(X_{v_s}=k)(1-(1-w_m/l_n)^k)\\
		&\leq \frac{w_m}{l_n}\sum_{s=1}^{i-1}\mathbb{E}(X_{v_s})=O\left(\frac{w_m i}{l_n}\right),
		\end{align*}
		whence
		\begin{equation*}
		\sum_{m=1}^{n}\frac{w_m}{l_n}\mathbb{P}(\exists s\leq i-1:J^{v_s}_l=m \text{ for some }l\in [X_{v_s}])=O(i/l_n).
		\end{equation*}
		There remains to consider the event in (b). In this case, a union bound and our previous estimate of $l'_n(i)$ yields 
		\[\mathbb{P}(\exists s\leq i-1:|\mathcal{A}^{BP}_{s-1}|=0,m^{BP}_s=m)=O(w_mi/l_n)\]
		so that also in this case we have
		\begin{equation*}
		\sum_{m=1}^{n}\frac{w_m}{l_n}\mathbb{P}(\exists s\leq i-1:|\mathcal{A}^{BP}_{s-1}|=0,m^{BP}_s=m)=O(i/l_n).
		\end{equation*}
		Consequently we arrive at
		\[\sum_{m=1}^{n}\frac{w_m}{l_n}\mathbb{P}(m\in \mathcal{A}^{BP}_{i-1}\cup \mathcal{E}^{BP}_{i-1})=O\left(\frac{w_1\vee i}{n}\right).\]
		Finally, let's consider the case where $i=1$ and $\tau>4$. In this case it is not difficult to see that $\mathbb{E}(I_1)=O(1/n)$, completing the proof of the lemma.\\
	\end{proof}

	\begin{center}
		\textbf{Proof of Proposition \ref{prop1alltau}}
	\end{center}
	Let us start by introducing an auxiliary process $W_t$ defined as follows. We set $W_0\coloneqq |\mathcal{A}^{BP}_{0}|=1$ and define recursively $W_t$ in the following way:
	\begin{itemize}
		\item $W_t=W_{t-1}+|\mathcal{M}_{t}|-1$, if $|\mathcal{A}^{BP}_{t-1}|\geq 1$;
		\item $W_t=W_{t-1}+|\mathcal{M}_t|$ if $|\mathcal{A}^{BP}_{t-1}|=0$.
	\end{itemize}
	Note that $W_t\geq |\mathcal{A}^{BP}_{t}|$ at all times $t<T_1$ \textcolor{black}{(and so in particular $W_t\geq 0$ for all $t$)}. Indeed, for $t=1$ we see that $W_1=|\mathcal{M}_1|\geq |\widetilde{\mathcal{M}}_1|=|\mathcal{A}^{BP}_{1}|$. If the inequality is true for some $1\leq t<T_1-1$, we see that, if $|\mathcal{A}^{BP}_{t}|\geq 1$, then $W_{t+1}\geq W_t+|\widetilde{\mathcal{M}}_{t+1}|-1\geq |\mathcal{A}^{BP}_{t}|+|\widetilde{\mathcal{M}}_{t+1}|-1=|\mathcal{A}^{BP}_{t+1}|$. Similarly, when $|\mathcal{A}^{BP}_{t}|=0$ we obtain that \textcolor{black}{$W_{t+1}=W_t+|\mathcal{M}_{t+1}|\geq |\mathcal{A}^{BP}_{t}|+|\widetilde{\mathcal{M}}_{t+1}|=|\widetilde{\mathcal{M}}_{t+1}|=|\mathcal{A}^{BP}_{t+1}|$, establishing the claim.}
	Define the (bounded) stopping time
	\begin{equation}\label{tau2h}
	\tilde\tau_{h} \coloneqq \left\{ \begin{aligned}
	& \inf \{t< T_1:W_t\geq 2h\}, &&  \text{ if }\{t< T_1:W_t\geq 2h\}\neq \emptyset,\\
	& T_1, && \text{ if }\{t< T_1:W_t\geq 2h\}=\emptyset.\\
	\end{aligned}
	\right.
	\end{equation}
	Note that
	\begin{align*}\label{623Prime}
	\mathbb{P}(|\mathcal{A}_t^{BP}|<h\text{ }\forall &t\in [T_1-1])\\
	&\leq \mathbb{P}(W_t<2h\text{ }\forall t\in [T_1-1])+\mathbb{P}(\exists t<T_1:W_t-|\mathcal{A}^{BP}_{t}|\geq h)\\
	&=\mathbb{P}(\tilde\tau_{h}=T_1)+\mathbb{P}(\exists t<T_1:W_t-|\mathcal{A}^{BP}_{t}|\geq h).
	\end{align*}
	We claim that, for $t<T_1$,
	\begin{equation}\label{keyineq}
	W_t-|\mathcal{A}^{BP}_{t}|\leq \sum_{i=1}^{t}I_i,
	\end{equation}
	where we recall that (by definition) $I_i=|\mathcal{M}_i|-|\widetilde{\mathcal{M}}_i|$. We establish (\ref{keyineq}) by induction on $t$. For $t=1$ we have $W_1-|\mathcal{A}^{BP}_{1}|=I_1$ and so the inequality is trivially true. Next, suppose that it holds for $1\leq t<T_1-1$. Note that, if $|\mathcal{A}^{BP}_t|=0$, then using the inductive hypothesis we obtain $W_{t+1}-|\mathcal{A}^{BP}_{t+1}|=W_t+I_{t+1}=W_t-|\mathcal{A}^{BP}_{t}|+I_{t+1}\leq \sum_{i=1}^{t+1}I_i$. Similarly, if $|\mathcal{A}^{BP}_{t}|\geq 1$, by the inductive hypothesis we obtain $W_{t+1}-|\mathcal{A}^{BP}_{t+1}|=W_t-|\mathcal{A}^{BP}_{t}|+I_{t+1}\leq \sum_{i=1}^{t+1}I_i$. This establishes (\ref{keyineq}). It follows that
	\begin{multline*}
	\mathbb{P}(\exists t<T_1:W_t-|\mathcal{A}^{BP}_{t}|\geq h)\leq \mathbb{P}(\exists t<T_1:\sum_{i=1}^{t}I_i\geq h)\\
	\leq \mathbb{P}(\sum_{i=1}^{T_1-1}I_i\geq h)\leq \frac{\sum_{i=1}^{T_1-1}\mathbb{E}[I_i]}{h},
	\end{multline*}
	where the second inequality exploits the fact that each $I_i$ is non-negative. By Lemma \ref{implemmaforlowerbound} we know that (since $l_n\asymp n$) $\mathbb{E}[I_i]=O(i/n)$ for every $i<T_1$. Therefore the ratio on the right-hand side of the last expression is $O(T^2_1/(hn)$ and hence we arrive at
	\begin{equation}\label{gobackhere}
	\mathbb{P}(|\mathcal{A}_t^{BP}|<h\text{ }\forall t\in [T_1-1])\leq \mathbb{P}(\tilde\tau_{h}=T_1)+O\Big(\frac{T^2_1}{hn}\Big).
	\end{equation}
	By Markov's inequality, $\mathbb{P}(\tilde\tau_{h}=T_1)\leq T^{-1}_1\mathbb{E}(\tilde\tau_{h})$ and so to complete the proof we need to bound the expected value of the stopping time $\tilde\tau_{h}$. This is achieved by means of Theorem \ref{OST} in the following way. First of all, note that each $W_t$ is $\mathcal{F}^{BP}_t$-measurable and, if $|\mathcal{A}^{BP}_{t-1}|\geq 1$, then 
	\begin{equation}\label{computationsecmom}
	\mathbb{E}(W^2_t\mid \mathcal{F}^{BP}_{t-1}) = W^2_{t-1}+2W_{t-1}\mathbb{E}(|\mathcal{M}_t|-1\mid \mathcal{F}^{BP}_{t-1})+\mathbb{E}((|\mathcal{M}_t|-1)^2\mid \mathcal{F}^{BP}_{t-1}),
	\end{equation}
	while if $|\mathcal{A}^{BP}_{t-1}|=0$ then (since $W_{t-1}|\mathcal{M}_t|\geq 0$)
	\begin{equation}\label{imptrick}
	\mathbb{E}(W^2_t\mid \mathcal{F}^{BP}_{t-1})\geq W^2_{t-1}+\mathbb{E}(|\mathcal{M}_t|^2 \mid \mathcal{F}^{BP}_{t-1}).
	\end{equation}
	\textcolor{black}{We wish to estimate (from below) the expressions on the right-hand side of (\ref{computationsecmom}) and (\ref{imptrick}). First of all, note that if} $|\mathcal{A}^{BP}_{t-1}|\geq 1$, then $|\mathcal{M}_t|=X_{v_t}$ has a mixed Poisson distribution with random parameter $M$ (given in (\ref{M})) and hence $\mathbb{E}(|\mathcal{M}_t|\mid \mathcal{F}^{BP}_{t-1})=\nu_n,\mathbb{E}(|\mathcal{M}_t|^2\mid |\mathcal{F}^{BP}_{t-1}|)=\nu_n+\mathbb{E}((W^*_n)^2)$, so that 
	\[-C_1n^{-\frac{\tau-3}{\tau-1}}\leq \nu_n-1=\mathbb{E}(|\mathcal{M}_t|-1\mid \mathcal{F}^{BP}_{t-1})\leq C_1n^{-\frac{\tau-3}{\tau-1}}\]
	and, since $\mathbb{E}((W^*_n)^2)\geq \mathbb{E}(W^3)/\mathbb{E}(W)-o(1)$,
	\[\mathbb{E}(|\mathcal{M}^2_t|\mid \mathcal{F}^{BP}_{t-1})\geq 1+\mathbb{E}(W^3)/\mathbb{E}(W)-o(1).\]
	On the other hand, if $|\mathcal{A}^{BP}_{t-1}|=0$ then $|\mathcal{M}_t|$ has a mixed Poisson distribution with random parameter $m^{BP}_t$ which takes values $m\in [n]\setminus \mathcal{E}^{BP}_{t-1}$ with probability $w_m/l'_n(t-1)\asymp w_m/l_n$ and in this case we have (since $l'_n(i)\leq l_n$ for every $i$ and $t<T_1$)
	\begin{align}\label{comb}
	\nonumber\mathbb{E}(|\mathcal{M}^2_t|\mid \mathcal{F}^{BP}_{t-1})=\sum_{m\in [n]\setminus \mathcal{E}^{BP}_{t-1}}^{}\frac{w_m}{l'_n(t)}(w_m+w^2_m)&\geq \nu_n+\mathbb{E}((W^*_n)^2)-\sum_{m=1}^{t-1}\frac{w^2_m}{l_n}-\sum_{m=1}^{t-1}\frac{w^3_m}{l_n}\\
	\nonumber&\geq 1+\frac{\mathbb{E}(W^3)}{\mathbb{E}(W)}-O\bigg(\Big(\frac{T_1}{n}\Big)^{\frac{\tau-4}{\tau-3}}\bigg)-o(1)\\
	&=1+\frac{\mathbb{E}(W^3)}{\mathbb{E}(W)}-o(1).
	\end{align} 
	Therefore, going back to (\ref{computationsecmom}), we see that if $|\mathcal{A}^{BP}_{t-1}|\geq 1$ then
	\begin{equation*}
	\mathbb{E}(W^2_t\mid \mathcal{F}^{BP}_{t-1}) \geq W^2_{t-1}-2C_1n^{-\frac{\tau-3}{\tau-1}}W_{t-1}+\frac{\mathbb{E}(W^3)}{\mathbb{E}(W)}-o(1).
	\end{equation*}
	If we also require $W_{t-1}<2h$ then, since $hn^{-\frac{\tau-3}{\tau-1}}=O(n^{-2\frac{\tau-4}{3(\tau-1)}})\ll 1$ (recall that $\tau>4$), we obtain that the expression on the right-hand side of the last inequality is at least 
	\[W^2_{t-1}+\frac{\mathbb{E}(W^3)}{\mathbb{E}(W)}-o(1).\]
	Thanks to (\ref{imptrick}) and (\ref{comb}) we know that that same is true when $|\mathcal{A}^{BP}_{t-1}|=0$, whence we conclude that the process defined by
	\[W^2_{t\wedge \tilde\tau_{h}} - (t\wedge \tilde\tau_{h})\frac{\mathbb{E}(W^3)}{2\mathbb{E}(W)} \]
	is a submartingale. By the Optional Stopping Theorem \ref{OST} applied with the stopping times $\tau_1=0$ and $\tau_2=\Tilde{\tau}_h$ we arrive at
	\[\mathbb{E}(\Tilde{\tau}_h)\leq \frac{\mathbb{E}(W^2_{\Tilde{\tau}_h})}{\mathbb{E}(W^3)/(2\mathbb{E}(W))}.\]
	Since $\mathbb{E}(W^2_{\Tilde{\tau}_h})\leq 4h^2$ for all large enough $n$, we conclude that 
	\[\mathbb{P}(|\mathcal{A}_t^{BP}|<h\text{ }\forall t\in [T_1-1])\leq \mathbb{P}(\tilde\tau_{h}=T_1)+O\Big(\frac{T^2_1}{hn}\Big)=O\Big(\frac{h^2}{T_1}\vee \frac{T^2_1}{hn}\Big).\]
	\textcolor{black}{Plugging the values of $h\asymp n^{1/3}/A^{1/4}$ and $T_1\asymp n^{2/3}/A^{1/4}$ into the last expression yields that $|\mathcal{A}_t^{BP}|$ stays below $h$ for $T_1-1$ steps with probability $O(1/A^{1/4})$, as desired.}\\

	\begin{center}
		\textbf{Proof of Proposition \ref{prop134}}
	\end{center}
	Recall that $3<\tau<4$. Since $|\mathcal{A}^{BP}_1|=|\widetilde{\mathcal{M}}_1|=|\mathcal{M}_1|-I_1$ we can write
	\begin{equation*}
	\mathbb{P}(|\mathcal{A}^{BP}_1|< h)=\mathbb{P}(|\mathcal{M}_1|<h+I_1)\leq \mathbb{P}(|\mathcal{M}_1|<2h)+\mathbb{P}(I_1\geq h).
	\end{equation*}
	By Lemma \ref{implemmaforlowerbound} and using Markov's inequality we have
	\[\mathbb{P}(I_1\geq h)=O\Big(\frac{w_1^2}{h n}\Big). \]
	Therefore, we obtain
	\begin{equation*}
	\mathbb{P}(|\mathcal{A}^{BP}_1|< h)\leq \mathbb{P}(|\mathcal{M}_1|<2h)+ O\Big(\frac{w_1^2}{h n}\Big)
	=\mathbb{P}(\text{Poi}(w_1)<2h)+O\Big(\frac{w_1^2}{h n}\Big).
	\end{equation*}
	Since $h=\lfloor \delta n^{\frac{1}{\tau-1}} \rfloor\leq \delta c^{1/(\tau-1)}_F w_1$, 
	by taking a small enough $\delta<1/(4c_F^{1/(\tau-1)})$ and using Chernoff's inequality we see that $\mathbb{P}(\text{Poi}(w_1)<2h)\leq \mathbb{P}(\text{Poi}(w_1)<w_1/2)$ can be made exponentially small (in $n$). Moreover, 
	\[\frac{w_1^2}{h n}=O\bigg(\frac{n^{\frac{2}{\tau-1}}}{n^{1+\frac{1}{\tau-1}}}\bigg)=O\bigg(\frac{1}{n^{\frac{\tau-2}{\tau-1}}}\bigg) \]
	and therefore we conclude that
	\begin{equation*}
	\mathbb{P}(|\mathcal{A}^{BP}_1|< h)\leq \frac{C}{n^{\frac{\tau-2}{\tau-1}}}
	\end{equation*}
	for some finite constant $C>0$ which depends on $c_F$ and $\tau$.\\

	\begin{center}
		\textbf{Proof of Proposition \ref{prop2alltau}}
	\end{center}
	Recall that here we want to bound from above
	\begin{equation}\label{wwn}
	\mathbb{P}(\tau_j-\tau_{j-1}<T_2\hspace{0.15cm}\forall j,\exists t\in [T_1-1]:|\AAA^{BP}_t|\geq h).
	\end{equation}
	Define $\tau_h\coloneqq \min\{t<T_1:|\AAA^{BP}_t|\geq h\}$ if this set is nonempty, otherwise let $\tau_h\coloneqq T_1$. Note that, on the event $\{\exists t\in [T_1-1]:|\AAA^{BP}_t|\geq h\}$, we have $|\AAA^{BP}_{\tau_h}|\geq h$. Moreover, if $\tau_j-\tau_{j-1}$ is smaller than $T_2$ for every $j$, then there must be a time $t<T_2$ such that $|\AAA^{BP}_{\tau_h+s}|>0$ for all $s\leq t-1$ and $|\AAA^{BP}_{\tau_h+t}|=0$. Consequently, recalling that $|\AAA^{BP}_{i}|=|\AAA^{BP}_{i-1}|+|\widetilde{\mathcal{M}}_i|-1$ if $|\AAA^{BP}_{i-1}|>0$ and $|\widetilde{\mathcal{M}}_i|=|\mathcal{M}_i|-I_i$ for every $i$, we conclude that there must be a time $t<T_2$ such that the process
	\[R'_{\tau_h+s}\coloneqq |\AAA^{BP}_{\tau_h+s}|+\sum_{i=1}^{s}(|\mathcal{M}_{\tau_h+i}|-1)\]
	stays above $\sum_{i=1}^{s}I_{\tau_h+i}$ for all $s\leq t-1$ and $R'_{\tau_h+t}\leq \sum_{i=1}^{t}I_{\tau_h+i}$. It follows from Lemma \ref{implemmaforlowerbound} that, since $\tau_h\leq T_1$ and $T_2\leq T_1$, with probability at least $1-O((T_1T_2)/(hn))$ we have $\sum_{i=1}^{T_2}I_{\tau_h+i}\leq h/2$. All in all, we can conclude that there must be a time $t<T_2$ at which the process 
	\textcolor{black}{\[R_{\tau_h+t}\coloneqq |\AAA^{BP}_{\tau_h+t}|-\frac{h}{2}+\sum_{i=1}^{t}(|\mathcal{M}_{\tau_h+i}|-1)\leq 0.\]}
	Define $\tau_0\coloneqq \min\{t\geq 1: R_{\tau_h+t}\leq 0\}$ if this set is nonempty, otherwise let $\tau_0\coloneqq T_2$. \textcolor{black}{Based on our previous discussion we conclude that} the probability in (\ref{wwn}) is at most
	\begin{equation}\label{kj}
	\mathbb{P}(\tau_0<T_2\mid |\mathcal{A}^{BP}_{\tau_h}|\geq h)+O\Big(\frac{T_1T_2}{hn}\Big).
	\end{equation}
	Write $\mathbb{P}_h(\cdot)$ for the conditional probability given $\{|\mathcal{A}^{BP}_{\tau_h}|\geq h\}$ and denote by $\mathbb{E}_h[\cdot]$ for conditional expectation given that event. Define 
	\[W_s\coloneqq \frac{h}{2}-\Big(\frac{h}{2}\wedge R_{\tau_h+s}\Big).\]
	Note that, if $0<W_{s-1}<h/2$ (which means that $0<R_{\tau_h+s-1}<h/2$) \textcolor{black}{then it is not hard to show that}
	\begin{equation}\label{te}
	W^2_s-W^2_{s-1}\leq (|\mathcal{M}_{\tau_h+s}|-1)^2+2(1-|\mathcal{M}_{\tau_h+s}|)W_{s-1}.
	\end{equation}
	Taking (conditional) expectation on both sides of (\ref{te}) given $R_{\tau_h+s-1},\tau_h$, and since
	\[\mathbb{E}(|\mathcal{M}_{\tau_h+s}|\mid R_{\tau_h+s-1},\tau_h)=\nu_n, \text{ } \mathbb{E}(|\mathcal{M}_{\tau_h+s}|^2\mid R_{\tau_h+s-1},\tau_h)=\nu_n+\mathbb{E}((W^*_n)^2)\]
	and $h|1-\nu_n|\ll 1$ we arrive at
	\[\mathbb{E}(W^2_s-W^2_{s-1}\mid R_{\tau_h+s-1},\tau_h)\leq 1+\mathbb{E}((W^*_n)^2)+o(1)\leq 2+\mathbb{E}(W^3)/\mathbb{E}(W).\]
	Since the same bound holds true when $R_{\tau_h+s-1}\geq h$, we conclude that the process 
	\[W^2_{s\wedge \tau_0}-(2+\mathbb{E}(W^3)/\mathbb{E}(W))(s\wedge \tau_0), \text{ }0\leq s\leq T_2\]
	is a supermartingale. Moreover, under $\mathbb{E}_h$ such a supermartingale starts at $0$ and so we can use Theorem \ref{OST} to conclude that
	\[\mathbb{E}_h(W^2_{T_2\wedge \tau_0})\leq (2+\mathbb{E}(W^3)/\mathbb{E}(W))(T_2\wedge \tau_0)\leq (2+\mathbb{E}(W^3)/\mathbb{E}(W))T_2\eqqcolon c_0(w)T_2.\]
	Whence we arrive at
	\textcolor{black}{\begin{equation*}
		\mathbb{P}_h(\tau_0<T_2)\leq \mathbb{P}_h(W^2_{T_2\wedge \tau_0}\geq h^2/4)\leq \frac{4\mathbb{E}_h(W^2_{T_2\wedge \tau_0})}{h^2}\leq \frac{4c_0(w)T_2}{h^2},
		\end{equation*}}
	which together with (\ref{kj}) yields the desired result.\\

	\begin{center}
		\textbf{Proof of Proposition \ref{prop234}}
	\end{center}
	The proof follows that same step carried out in the proof of Proposition \ref{prop2alltau}. Specifically, by noticing that in this case we have $\tau_h=1$, following precisely the same steps we see that the process
	\[W^2_{s\wedge \tau_0}-(2+\mathbb{E}((W^*_n)^2))(s\wedge \tau_0), \text{ }0\leq s\leq T_2\]
	is a supermartingale and so we obtain
	\[\mathbb{E}_h(W^2_{T_2\wedge \tau_0})\leq (2+\mathbb{E}((W^*_n)^2))(T_2\wedge \tau_0)\leq (2+\mathbb{E}((W^*_n)^2))T_2.\]
	Whence we arrive at
	\textcolor{black}{\begin{equation*}
		\mathbb{P}_h(\tau_0<T_2)\leq \mathbb{P}_h(W^2_{T_2\wedge \tau_0}\geq h^2/4)\leq \frac{4\mathbb{E}_h(W^2_{T_2\wedge \tau_0})}{h^2}=O\Big(\frac{\mathbb{E}((W^*_n)^2)T_2}{h^2}\Big).
		\end{equation*}}
	Plugging the exact values of $h$ and $T_2$ in the ratio above and using the fact that $\mathbb{E}((W^*_n)^2)=O(n^{\frac{4-\tau}{\tau-1}})$ yields the desired result.

	\subsection{Proofs of Lemma \ref{maxw}, Proposition \ref{ordersize} and Lemmas \ref{OurLemma6}, \ref{lempositive} and \ref{LemUmbi}}\label{subsectionauxiliary}
	In this subsection we prove all the auxiliary results that have been used to obtain the bounds stated in Theorems \ref{teo1taugreater4}, \ref{teo2tauin34} and \ref{teoexp}.
	\begin{proof}[\textbf{Proof of Lemma \ref{maxw}}]
		\textcolor{black}{Suppose first that (\ref{COND}) holds.} Then we have
		\begin{align}\label{wi}
		\nonumber w_i=[1-F]^{-1}(i/n)=& \inf\{s: 1-F(s)\leq i/n\}\\\nonumber
		\nonumber=& \inf\Big\{s: s\geq \Big(\frac{n c_F}{i}\Big)^{1/(\tau-1)}\Big\}\\
		=&\Big(\frac{n c_F}{i}\Big)^{1/(\tau-1)}, \forall i\in [n].
		\end{align}
		Now suppose that $1-F(x)\leq c_Fx^{-(\tau-1)}$ (for every $x\geq 0$) for some $c_F>0,\tau>3$. Let $f(x)$ and $g(x)$ be two functions such that for all $x\geq 0$, $f(x)\leq g(x)$. Since for any $s\geq 0$,  $g(s)\leq t$ implies $f(s)\leq t$, then $\{s:g(s)\leq t\}\subseteq \{s: f(s)\leq t\}$, so $\inf\{s:g(s)\leq t\}\geq \inf\{s: f(s)\leq t\}$. Taking $f(x)=1-F(x)$, $g(x)=c_Fx^{-(\tau-1)}$ and $t=i/n$ it follows from (\ref{wi})
		that $w_i\leq (nc_F/i)^{1/(\tau-1)}$.
		Thus in particular $\omega_1=\text{max}_{i\in [n]} \omega_i=O\left(n^{1/(\tau-1)}\right)$.
		
	\end{proof}

	\begin{proof}[\textbf{Proof of Proposition \ref{ordersize}}]
		To prove Proposition \ref{ordersize} we need Lemma A.1 in \cite{Hofstadt}, which we state here for the reader's convenience.
		\begin{lem}\label{lemmaA1}
			Let $W$ have distribution $F$ and let $W_n$ have distribution $F_n$ as in \eqref{fn}. Let $h:[0,\infty)\rightarrow \C$ be a differentiable function with $h(0)=0$ such that $|h^{'}(x)|[1-F(x)]$ is integrable on $[0,\infty)$. Then for every $a>0$
			\[|\E(h(W_n))-\E(h(W))|\leq \int_{a}^{\infty}|h^{'}(x)|[1-F(x)]dx+\frac{1}{n}\int_{0}^{a}|h^{'}(x)|dx.\]
		\end{lem}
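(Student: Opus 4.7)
The plan is to represent both expectations as integrals of $h'$ weighted by the respective tail functions, then to bound the pointwise discrepancy between $F$ and $F_n$ in two complementary ways, and finally to split the integration at $a$ so that each region uses the bound that makes the resulting integrand finite by hypothesis.

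First, since $h$ is differentiable with $h(0)=0$, for every $y\geq 0$ I can write $h(y)=\int_0^y h'(x)\,dx$. Applying Fubini--Tonelli (justified by the integrability assumption $|h'(x)|[1-F(x)]\in L^1([0,\infty))$, together with the analogous statement for $F_n$ which will follow from the pointwise bound below) yields the ``layer cake'' identities
\[
\E[h(W)]=\int_0^\infty h'(x)\,[1-F(x)]\,dx,\qquad \E[h(W_n)]=\int_0^\infty h'(x)\,[1-F_n(x)]\,dx.
\]
Subtracting and taking absolute values inside the integral gives
\[
|\E[h(W_n)]-\E[h(W)]|\leq \int_0^\infty |h'(x)|\,|F_n(x)-F(x)|\,dx.
\]

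The next step is to bound $|F_n(x)-F(x)|$ pointwise. Here I would exploit the explicit representation $F_n(x)=\min\{n^{-1}(\lfloor nF(x)\rfloor+1),1\}$ recorded in Remark \ref{remcritparameter}, which is itself a direct consequence of the definition $w_j=[1-F]^{-1}(j/n)$ together with the standard right-continuity identity $[1-F]^{-1}(u)\leq x\iff 1-F(x)\leq u$. From this representation one reads off two bounds simultaneously: $F(x)\leq F_n(x)\leq F(x)+1/n$ and $F_n(x)\leq 1$, so that
\[
0\leq F_n(x)-F(x)\leq \min\{1/n,\,1-F(x)\}.
\]

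Finally, I split $[0,\infty)=[0,a)\cup[a,\infty)$ and apply the bound $|F_n(x)-F(x)|\leq 1/n$ on the first piece and $|F_n(x)-F(x)|\leq 1-F(x)$ on the second piece. This directly produces
\[
|\E[h(W_n)]-\E[h(W)]|\leq \frac{1}{n}\int_0^a |h'(x)|\,dx+\int_a^\infty |h'(x)|\,[1-F(x)]\,dx,
\]
which is the stated inequality. The only mildly delicate point is the application of Fubini to $W_n$, but this is immediate once one observes that $[1-F_n(x)]\leq [1-F(x)]+1/n$ and that on $[0,a]$ the function $h'$ is bounded by assumption of differentiability, so there is no real obstacle; the argument is essentially a direct computation once the pointwise bound on $F_n-F$ is in hand.
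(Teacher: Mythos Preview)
Your argument is correct and is the standard proof of this result. Note, however, that the present paper does not supply its own proof: the lemma is quoted verbatim as Lemma~A.1 of \cite{Hofstadt} and used as a black box inside the proof of Proposition~\ref{ordersize}. There is therefore no in-paper proof to compare against; your write-up simply fills in the cited argument, and it does so along exactly the expected lines (tail-integral representation via Fubini, then the pointwise bound $0\le F_n(x)-F(x)\le \min\{1/n,\,1-F(x)\}$ coming from $F_n(x)=\min\{n^{-1}(\lfloor nF(x)\rfloor+1),1\}$, and finally the split at $a$).

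One small inaccuracy in your closing remark: you justify Fubini for $W_n$ by asserting that $h'$ is bounded on $[0,a]$ ``by assumption of differentiability''. Differentiability alone does not guarantee a bounded (or even locally integrable) derivative. The clean justification is already implicit in your own pointwise inequality: since $F(x)\le F_n(x)$ you have $1-F_n(x)\le 1-F(x)$, whence
\[
\int_0^\infty |h'(x)|\,[1-F_n(x)]\,dx\;\le\;\int_0^\infty |h'(x)|\,[1-F(x)]\,dx\;<\;\infty
\]
by hypothesis, and Fubini applies. (Alternatively, $W_n\le w_1$ almost surely, so $1-F_n$ has compact support.) This is cosmetic and does not affect the validity of your proof.
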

		Suppose first that $\tau>3$. Taking $h(x)=x$ in Lemma \ref{lemmaA1} we get
		\begin{align*}
		\nonumber|\E(W_n)-\E(W)|&\leq \int_{a}^{\infty}(1-F(x))dx+\frac{a}{n}\\
		&\leq \frac{c_F}{\tau-2}a^{-(\tau-2)}+\frac{a}{n}.
		\end{align*}
		Taking $a=(c_Fn)^{1/(\tau-1)}$ we obtain
		\begin{align}\label{98Prime} 
		\nonumber|\E(W_n)-\E(W)|\leq c^{1/(\tau-1)_F}\frac{\tau-1}{\tau-2}n^{-\frac{\tau-2}{\tau-1}}.
		\end{align}
		Next, let $h(x)=x^2$ and observe that $|h'(x)|(1-F(x))\leq 2c_Fx^{-(\tau-2)}$ is integrable since $\tau-2>1$. Thus we can apply Lemma \ref{lemmaA1} to obtain
		\begin{align}
		\nonumber|\E(W^2_n)-\E(W^2)|&\leq \frac{2c_F}{\tau-3}a^{-(\tau-3)}+\frac{a^2}{n}.
		\end{align}
		Taking $a=(c_Fn)^{1/(\tau-1)}$ we arrive at
		\begin{equation}\label{due}
		\nonumber|\E(W^2_n)-\E(W^2)|\leq \frac{2c^{2/(\tau-1)}_F}{\tau-3}n^{-\frac{\tau-3}{\tau-1}}(1+O(n^{-1/(\tau-1)})).
		\end{equation}
		Therefore we obtain 
		\begin{enumerate}
			\item [(i)] $\mathbb{E}(W)-c^{1/(\tau-1)}_F n^{-\frac{\tau-2}{\tau-1}}\frac{\tau-1}{\tau-2}\leq  \mathbb{E}(W_n)$;
			\item [(ii)] $\mathbb{E}(W_n)\leq \mathbb{E}(W)+ c^{1/(\tau-1)}_F n^{-\frac{\tau-2}{\tau-1}}\frac{\tau-1}{\tau-2}$;
			\item [(iii)] $\mathbb{E}(W^2) - \frac{2c^{2/(\tau-1)}_F}{\tau-3}n^{-\frac{\tau-3}{\tau-1}}\left(1+O(n^{-1/(\tau-1)})\right)\leq \mathbb{E}(W^2_n)$;
			\item [(iv)] $\mathbb{E}(W^2_n)\leq \mathbb{E}(W^2)+ \frac{2c^{2/(\tau-1)}_F}{\tau-3}n^{-\frac{\tau-3}{\tau-1}}\left(1+O(n^{-1/(\tau-1)})\right)$.
		\end{enumerate}
		Consequently, letting $N\in \mathbb{N}$ be so large that
		\[\mathbb{E}(W)-c^{1/(\tau-1)}_F N^{-\frac{\tau-2}{\tau-1}}\frac{\tau-1}{\tau-2}>0\]
		we see that, for $n\geq N$,
		\[\frac{ \mathbb{E}(W^2_n)}{ \mathbb{E}(W_n)}\leq \frac{\mathbb{E}(W^2)+ \frac{2c^{2/(\tau-1)}_F}{\tau-3}n^{-\frac{\tau-3}{\tau-1}}\Big(1+O(n^{-1/(\tau-1)})\Big)}{\mathbb{E}(W)-c^{1/(\tau-1)}_F n^{-\frac{\tau-2}{\tau-1}}\frac{\tau-1}{\tau-2}}\]
		and
		\[\frac{ \mathbb{E}(W^2_n)}{ \mathbb{E}(W_n)}\geq \frac{\mathbb{E}(W^2) - \frac{2c^{2/(\tau-1)}_F}{\tau-3}n^{-\frac{\tau-3}{\tau-1}}\Big(1+O(n^{-1/(\tau-1)})\Big)}{\mathbb{E}(W)+ c^{1/(\tau-1)}_F n^{-\frac{\tau-2}{\tau-1}}\frac{\tau-1}{\tau-2}}.\]
		Using the fact that $\mathbb{E}(W^2)/\mathbb{E}(W)=1$ we obtain, for $n\geq N$,
		\[\Big| \frac{ \mathbb{E}(W^2_n)}{ \mathbb{E}(W_n)}-1 \Big|\leq \frac{p_n}{\mathbb{E}(W)- c^{1/(\tau-1)}_F n^{-\frac{\tau-2}{\tau-1}}\frac{\tau-1}{\tau-2}}\]
		where we set
		\begin{equation*}
		p_n\coloneqq \frac{2c^{2/(\tau-1)}_F}{\tau-3}n^{-\frac{\tau-3}{\tau-1}}\left(1+O(n^{-1/(\tau-1)})\right)+c^{1/(\tau-1)}_F n^{-\frac{\tau-2}{\tau-1}}\frac{\tau-1}{\tau-2}.
		\end{equation*}
		Therefore for all large enough $n\geq N$ we obtain
		\begin{equation}
		|\nu_n-1|\leq \frac{4\frac{c^{2/(\tau-1)}_F}{\tau-3}n^{-\frac{\tau-3}{\tau-1}}}{\mathbb{E}(W)-c^{1/(\tau-1)}_F\frac{\tau-1}{\tau-2}n^{-\frac{\tau-2}{\tau-1}}},
		\end{equation}
		establishing the first part of the proposition. Next note that, if $\tau>4$ then $|h'(x)|(1-F(x))\leq 3c_Fx^{-(\tau-3)}$ is integrable and hence we can use once again Lemma \ref{lemmaA1} to bound
		\begin{equation}
		|\E(W_n^3)-\E(W^3)|\leq \frac{3c_F}{\tau-4}a^{-(\tau-4)}+\frac{a^2}{n}.
		\end{equation}
		Taking $a=(c_Fn)^{1/(\tau-1)}$ we see that
		\begin{equation}
		|\E(W_n^3)-\E(W^3)|\leq 3\frac{c^{3/(\tau-1)_F}}{\tau-4}n^{-\frac{\tau-4}{\tau-1}}\left(1+O(n^{-1/(\tau-1)})\right).
		\end{equation}
		Consequently we arrive at
		\begin{equation}
		\Big|\frac{\mathbb{E}(W^3_n)}{\mathbb{E}(W_n)}-\frac{\mathbb{E}(W^3)}{\mathbb{E}(W)}\Big|\leq \frac{\mathbb{E}(W)5\frac{c^{3/(\tau-1)}_F}{\tau-4}n^{-\frac{\tau-4}{\tau-1}}}{\mathbb{E}(W)\Big(\mathbb{E}(W)-c^{1/(\tau-1)}_F\frac{\tau-1}{\tau-2}n^{-\frac{\tau-2}{\tau-1}}\Big)},
		\end{equation}
		completing the proof.
	\end{proof}

	\begin{proof}[\textbf{Proof of Lemma \ref{OurLemma6}}]
		Since $S_{\gamma}=S_{\gamma-1}+\Upsilon_{\gamma}-1$ we obtain 
		\begin{align}\label{0}
		&\mathbb{P}(S_{\gamma}-H\geq k|S_{\gamma}\geq H ,\gamma \in \Sigma)\nonumber\\
		&\hspace{0.5cm}=\sum_{h=1}^{H-1}\sum_{m\in \Sigma}^{}\frac{\mathbb{P}(S_m-H\geq k, S_m\geq H, \gamma = m,S_{m-1}=H-h)}{\mathbb{P}(S_{\gamma}\geq H, \gamma \in \Sigma)}.
		\end{align}
		Now setting $\mathbb{N}_H\coloneqq \{n \in \mathbb{N}: n\geq H\}$ we see that
		\begin{equation*}
		\{\gamma = m\}=\Big\{S_j\in (0,H)\cap \mathbb{N}\text{ }\forall j \leq m-1, S_m\in \{0\}\cup \mathbb{N}_H  \Big\}
		\end{equation*}
		and on the event $\{S_m-H\geq k\}=\{S_m\geq H+k\}$ we clearly have $S_m\in \{0\}\cup \mathbb{N}_H $, whence the numerator of the ratio in (\ref{0}) can be written as 
		\begin{align}\label{fromhere}
		\nonumber&\mathbb{P}(S_m-H\geq k, S_m\geq H, \gamma = m,S_{m-1}=H-h)\\
		\nonumber&\hspace{0.5cm}=\mathbb{P}(S_m-H\geq k, S_m\geq H, S_j\in (0,H)\cap \mathbb{N}\text{ }\forall j\leq m-1,S_{m-1}=H-h)\\
		\nonumber&\hspace{0.5cm}=\mathbb{P}(\Upsilon_m\geq h+1+k,S_j\in (0,H)\cap \mathbb{N}\text{ }\forall j\leq m-1,S_{m-1}=H-h)\\
		&\hspace{0.5cm}=\mathbb{P}(\Upsilon_m \geq h+1+k)\mathbb{P}(S_j\in (0,H)\cap \mathbb{N}\text{ }\forall j\leq m-1,S_{m-1}=H-h),
		\end{align}
		where last equality follows from the fact that $\Upsilon_m$ is independent of $(S_j)_{1\leq j\leq m-1}$. Now
		\begin{align*}
		\mathbb{P}(\Upsilon_m \geq h+1+k)&=\mathbb{P}(\Upsilon_1 \geq h+1+k)\\
		&=\mathbb{P}(\Upsilon_1 \geq h+1+k, \Upsilon_1\geq h+1)\\
		&=\mathbb{P}(\Upsilon_1 \geq h+1+k|\Upsilon_1\geq h+1)\mathbb{P}(\Upsilon_1\geq h+1)\\
		&=\mathbb{P}(\Upsilon_1 \geq h+1+k|\Upsilon_1\geq h+1)\mathbb{P}(\Upsilon_m\geq h+1)
		\end{align*}
		and also
		\begin{multline}\label{iyy}
		\mathbb{P}(\Upsilon_m\geq h+1)\mathbb{P}(S_j\in (0,H)\cap \mathbb{N}\text{ }\forall j\leq m-1,S_{m-1}=H-h)\\
		=\mathbb{P}(\Upsilon_m\geq h+1, S_j\in (0,H)\cap \mathbb{N}\text{ }\forall j\leq m-1,S_{m-1}=H-h).
		\end{multline}
		Note that if $S_{m-1}=H-h$ and $\Upsilon_m\geq h+1$ then 
		\[S_m=S_{m-1}+\Upsilon_m-1= H-h+\Upsilon_m\geq  H.\]
		Thus the probability in \eqref{iyy} is at most
		\begin{align*}
		&\mathbb{P}(S_m\geq h, S_j\in (0,H)\cap \mathbb{N}\text{ }\forall j\leq m-1,S_{m-1}=H-h)\\
		&\hspace{0.5cm}=\mathbb{P}(S_m\geq H, S_j\in (0,H)\cap \mathbb{N}\text{ }\forall j\leq m-1,S_m\in \{0\}\cup \mathbb{N}_H,S_{m-1}=H-h)\\
		&\hspace{0.5cm}=\mathbb{P}(S_m\geq H, \gamma=m,S_{m-1}=H-h).
		\end{align*}
		Consequently the probability in \eqref{fromhere} is at most
		\begin{equation}
		\mathbb{P}(\Upsilon_1\geq k+h+1|\Upsilon_1\geq h+1)\mathbb{P}(S_m\geq H, \gamma=m,S_{m-1}=H-h)
		\end{equation}
		and hence the ration in \eqref{0} is bounded from above by
		\begin{multline*}
		\sum_{h=1}^{H-1}\sum_{m\in \Sigma}^{}\frac{\mathbb{P}(\Upsilon_1\geq k+h+1|\Upsilon_1\geq h+1)\mathbb{P}(S_m\geq H, \gamma=m,S_{m-1}=H-h)}{\mathbb{P}(S_{\gamma}\geq H, \gamma \in \Sigma)}\\
		=\sum_{h=1}^{H-1}\mathbb{P}(\Upsilon_1\geq k+h+1|\Upsilon_1\geq h+1)\mathbb{P}(S_{\gamma-1}=H-h|S_{\gamma}\geq H, \gamma \in \Sigma).
		\end{multline*}	
		Next we evaluate the probabilities $\mathbb{P}(\Upsilon_1\geq h+k+1|\Upsilon_1\geq h+1)$ appearing within last sum. Since $\Upsilon_1$ follows a mixed Poisson distribution $Poi(w_{M_1})$, then, conditional on $M_1=j$, $\Upsilon_1$ is distributed as a Poisson random variable with mean $w_j$, $j\in [n]$. Let $\{Y_{w_j}\}_{j\in [n]}$ be a sequence of random variables such that $Y_{w_j}\sim Poi(w_j)$, $j\in [n]$. Then a short calculation reveals that
		\begin{align}\label{3}
		&\mathbb{P}(\Upsilon_1\geq h+k+1|\Upsilon_1\geq h+1)\nonumber\\
		=&\sum_{i=1}^n \mathbb{P}(Y_{w_i}\geq h+k+1|Y_{w_i}\geq h+1)\mathbb{P}(M_1=i|\Upsilon_1\geq h+1).
		\end{align}
		We will show that, for $i\in [n]$,
		\begin{equation}\label{boundforstoch}
		\mathbb{P}(Y_{w_i}\geq h+k+1|Y_{w_i}\geq h+1)\leq \Prob(Y_{w_1}\geq k).
		\end{equation}
		Note that, if \eqref{boundforstoch} were true, then we would obtain
		\begin{multline*}
		\sum_{i=1}^n \mathbb{P}(Y_{w_i}\geq h+k+1|Y_{w_i}\geq h+1)\mathbb{P}(M_1=i|\Upsilon_1\geq h+1)\\
		\leq \sum_{i=1}^{n}\Prob(Y_{w_1}\geq k)\mathbb{P}(M_1=i|\Upsilon_1\geq h+1)=\Prob(Y_{w_1}\geq k)
		\end{multline*}
		and hence
		\begin{align*}
		\mathbb{P}(S_{\gamma}-H\geq k|S_{\gamma}\geq H,\gamma\in \Sigma)&\leq \Prob(Y_{w_1}\geq k)\sum_{h=1}^{H-1}\Prob(S_{\gamma-1}=H-h|S_{\gamma}\geq H,\gamma \in \Sigma)\\
		&=\Prob(Y_{w_1}\geq k),
		\end{align*}
		which is the required result. To establish \eqref{boundforstoch}, observe that
		\begin{equation}
		\mathbb{P}(Y_{w_i}\geq h+k+1|Y_{w_i}\geq h+1)=\frac{\mathbb{P}(Y_{w_i}\geq h+k+1)}{\mathbb{P}(Y_{w_i}\geq h+1)}.
		\end{equation}
		For $N\geq \lceil w_1 \rceil$ ($\geq w_i$) let $B_{N,w_i/N}$ be a random variable with the $\text{Bin}(N,w_i/N)$ distribution. Then for every $k\geq 0$ we see that
		\begin{equation*}
		\mathbb{P}(Y_{w_i}\geq h+k+1)=\lim_{N\rightarrow \infty}\mathbb{P}(B_{N,w_i/N}\geq h+k+1)
		\end{equation*}
		and hence
		\begin{align*}
		\frac{\mathbb{P}(Y_{w_i}\geq h+k+1)}{\mathbb{P}(Y_{w_i}\geq h+1)}&=\lim_{N\rightarrow \infty}\frac{\mathbb{P}(B_{N,w_i/N}\geq h+k+1)}{\mathbb{P}(B_{N,w_i/N}\geq h+1)}\\
		&=\lim_{N\rightarrow \infty}\mathbb{P}(B_{N,w_i/N}-(h+1)\geq k|B_{N,w_i/N}\geq h+1).
		\end{align*}
		Using Lemma 5 in \cite{NP} we obtain
		\begin{equation*}
		\mathbb{P}(B_{N,w_i/N}-(h+1)\geq k|B_{N,w_i/N}\geq h+1)\leq \mathbb{P}(B_{N,w_i/N}\geq k)
		\end{equation*}
		whence
		\begin{align*}
		\lim_{N\rightarrow \infty}\mathbb{P}(B_{N,w_i/N}-(h+1)\geq k|B_{N,w_i/N}\geq h+1)&\leq \lim_{N\rightarrow \infty}\mathbb{P}(B_{N,w_i/N}\geq k) \\
		&=\mathbb{P}(Y_{w_i}\geq k)\leq \mathbb{P}(Y_{w_1}\geq k),
		\end{align*}
		where for the last inequality we have used the fact that $w_i\geq w_{i+1}$ for $1\leq i\leq n-1$.
		
	\end{proof}

	\textcolor{black}{\begin{proof}[\textbf{Proof of Lemma \ref{lempositive}}]
			Note that
			\begin{equation}\label{THENUM}
			1-\nu_n=1-\E(W^*_n)=1-\frac{\E(W_n^2)}{\E(W_n)}
			\end{equation}
			By Lemma \ref{maxw} we have
			\begin{equation}\label{DEN}
			\E(W_n)=c_F^{1/(\tau-1)}n^{\frac{2-\tau}{\tau-1}}\sum_{i\in [n]}^{}i^{-1/(\tau-1)}
			\end{equation}
			and 
			\begin{equation}\label{NUM}
			\mathbb{E}(W^2_n)=c_F^{2/(\tau-1)}n^{\frac{3-\tau}{\tau-1}}\sum_{i\in [n]}^{}i^{-2/(\tau-1)}.
			\end{equation}
			Since
			\begin{equation*}
			\int_{1}^{b+1}x^{-r}dx\leq \sum_{x=1}^{b}x^{-r}\leq 1+\int_{1}^{b}x^{-r}dx
			\end{equation*}
			for every $r>0$ and all $x\geq 1$, it is not difficult to see that
			\begin{align*}
			\sum_{i=1}^{n}i^{-\frac{1}{\tau-1}}\geq \frac{\tau-1}{\tau-2}(n+1)^{\frac{\tau-2}{\tau-1}}-\frac{\tau-1}{\tau-2}
			\end{align*}
			and
			\begin{align*}
			\sum_{i=1}^{n}i^{-\frac{2}{\tau-1}}\leq 1+\frac{\tau-1}{\tau-3}n^{\frac{\tau-3}{\tau-1}}-\frac{\tau-1}{\tau-3}.
			\end{align*}
			Substituting these estimates into (\ref{NUM}) and (\ref{DEN}) and recalling that, under (\ref{COND}), we have $c^{1/(\tau-1)}_F=(\tau-3)(\tau-1)^{-1}$, we easily see that (for all large enough $n$) 
			(\ref{THENUM}) is at least
			\[\frac{n^{-\frac{\tau-3}{\tau-1}}}{\tau-1} >0,\]
			completing the proof.
	\end{proof}}

	\begin{proof}[\textbf{Proof of Lemma \ref{LemUmbi}}]
		Note that for every $i$ we have
		\begin{multline*}
		|\widetilde{\mathcal{M}}_{i}|=\Big|\{l\in [X_{v_i}]:J^{v_i}_l\notin (\AAA^{BP}_{i-1} \cup \{m^{BP}_i\})\cup \mathcal{E}^{BP}_{i-1}\cup \mathcal{L}^{v_i}_{l-1}\}\Big|\\
		\leq \Big|\{l\in [X_{v_i}]:J^{v_i}_l\notin \mathcal{E}^{BP}_{i-1}\}\Big|\eqqcolon F_i.
		\end{multline*}
		Intuitively, the reason why this is a good upper bound for $|\widetilde{\mathcal{M}}_{i}|$ is that the number of active marks never grows too much and hence (at least for $i$ sufficiently large) the main contribution to $|(\AAA^{BP}_{i-1} \cup \{m^{BP}_i\})\cup \mathcal{E}^{BP}_{i-1}\cup \mathcal{L}^{v_i}_{l-1}|$ comes from $|\mathcal{E}^{BP}_{i-1}|=i-1$ (recall that at each step during the exploration of the branching process trees we explore precisely one mark).
		\textcolor{black}{It follows that 
			\begin{equation*}
			\mathbb{E}_{Q_{\beta^*}}\Big(e^{r|\widetilde{\mathcal{M}}_{\beta^*+j}|-1}\Big)\leq \mathbb{E}_{Q_{\beta^*}}\Big(e^{r(F_{\beta^*+j}-1)}\Big).
			\end{equation*}
			Now observe that, for $l\in [X_{v_{\beta^*+j}}]$, we have
			\begin{align*}
			\mathbb{P}_{Q_{\beta^*}}(J^{v_{\beta^*+j}}_l\notin \mathcal{E}^{BP}_{\beta^*+j-1})&=1-\mathbb{P}_{Q_{\beta^*}}(J^{v_{\beta^*+j}}_l\in \mathcal{E}^{BP}_{\beta^*+j-1})\\
			&\leq 1-\mathbb{P}_{Q_{\beta^*}}(J^{v_{\beta^*+j}}_l\in \{n-j+1,\dots,n\}),
			\end{align*}}
		where the last inequality follows from the fact that $|\mathcal{E}^{BP}_{\beta^*+j-1}|=\beta^*+j-1\geq j$ and $w_i\geq w_{i+1}$ for $1\leq i\leq n-1$. Now recalling that $l_n$ is the sum of all the weights, we obtain
		\begin{align*}
		1-\mathbb{P}_{Q_{\beta^*}}(J^{v_{\beta^*+j}}_l\in \{n-j+1,\dots,n\})=\sum_{m=1}^{n}\frac{w_m}{l_n}-\sum_{m=n-j+1}^{n}\frac{w_m}{l_n}=\sum_{m=1}^{n-j}\frac{w_m}{l_n},
		\end{align*}
		whence
		\[\mathbb{P}_{Q_{\beta^*}}(J^{v_{\beta^*+j}}_l\notin \mathcal{E}^{BP}_{\beta^*+j-1})\leq \sum_{m=1}^{n-j}\frac{w_m}{l_n}.\]
		Since the number of children of  vertices different to $V_n$ are i.i.d. random variables with distribution $X\sim \text{Poi}(w_M)$, we obtain
		\begin{align*}
		\mathbb{E}_{Q_{\beta^*}}\Big(e^{rF_{\beta^*+j}}\Big)&\leq \mathbb{E}_{Q_{\beta^*}}\Big(e^{r\text{Bin}(X,\sum_{m=1}^{n-j}(w_m/l_n))}\Big)\\
		&=\mathbb{E}_{Q_{\beta^*}}\bigg(\mathbb{E}\Big(e^{r\text{Bin}(X,\sum_{m=1}^{n-j}(w_m/l_n))}|X\Big)\bigg)\\
		&=\mathbb{E}_{Q_{\beta^*}}\bigg(\Big(1+(e^r-1)\sum_{m=1}^{n-j}\frac{w_m}{l_n}\Big)^X\bigg)\\
		&=\sum_{h\geq 0}^{}\Big(1+(e^r-1)\sum_{m=1}^{n-j}\frac{w_m}{l_n}\Big)^h\mathbb{E}_{Q_{\beta^*}}(\mathbb{P}(X=h|M))\\
		&\leq \sum_{h\geq 0}^{}e^{h(e^r-1)\sum_{m=1}^{n-j}\frac{w_m}{l_n}}\mathbb{E}_{Q_{\beta^*}}\Big(e^{-w_M}\frac{w^h_M}{h!}\Big),
		\end{align*}
		where for the last inequality we have used the standard bound $1+x\leq e^x$, which is valid for all $x$. Since
		\begin{equation}
		\mathbb{E}_{Q_{\beta^*}}\Big(e^{-w_M}\frac{w^h_M}{h!}\Big)=\sum_{x=1}^{n}e^{-w_x}\frac{w^h_x}{h!}\frac{w_x}{l_n}
		\end{equation}
		we obtain
		\begin{align*}
		\sum_{h\geq 0}^{}e^{h(e^r-1)\sum_{m=1}^{n-j}\frac{w_m}{l_n}}\textcolor{black}{\mathbb{E}_{Q_{\beta^*}}\Big(e^{-w_M}\frac{w^h_M}{h!}\Big)}&=\sum_{x=1}^{n}e^{-w_x}\frac{w_x}{l_n}\sum_{h\geq 0}^{}\frac{\Big(e^{(e^r-1)\sum_{m=1}^{n-j}\frac{w_m}{l_n}}w_x\Big)^h}{h!}\\
		&=\sum_{x=1}^{n}e^{-w_x}\frac{w_x}{l_n}\exp\Big\{e^{(e^r-1)\sum_{m=1}^{n-j}\frac{w_m}{l_n}}w_x\Big\}\\
		&=\sum_{x=1}^{n}\frac{w_x}{l_n}\exp\Big\{w_x\left[e^{(e^r-1)\sum_{m=1}^{n-j}\frac{w_m}{l_n}}-1\right]\Big\}.
		\end{align*}
		Next we bound the term $e^{(e^r-1)\sum_{m=1}^{n-j}\frac{w_m}{l_n}}$ appearing within the last expression. Recall that $w_x=\Big(\frac{c_Fn}{x}\Big)^{\frac{1}{\tau-1}}$. Then 
		\begin{equation*}
		\sum_{m=1}^{n-j}\frac{w_m}{l_n}=\frac{(c_Fn)^{\frac{1}{\tau-1}}}{l_n}\sum_{m=1}^{n-j}m^{-\frac{1}{\tau-1}}.
		\end{equation*}
		Now
		\begin{equation}\label{partialsum}
		\sum_{m=1}^{n-j}m^{-\frac{1}{\tau-1}}\leq \frac{\tau-1}{\tau-2}(n-j)^{\frac{\tau-2}{\tau-1}}+1
		\end{equation}
		and
		\begin{equation}\label{allsum}
		l_n\geq c^{1/(\tau-1)}_F\frac{\tau-1}{\tau-2}n-O\left(n^{1/(\tau-1)}\right),
		\end{equation}
		where the constant in the $O-$notation depends on $c_F$ and $\tau$. Using jointly \eqref{partialsum} and \eqref{allsum} we see that
		\begin{align}\label{90Prime}
		\sum_{m=1}^{n-j}\frac{w_m}{l_n}\leq& (c_Fn)^{1/(\tau-1)}\frac{\frac{\tau-1}{\tau-2}(n-j)^{\frac{\tau-2}{\tau-1}}+1}{c^{1/(\tau-1)}_F\frac{\tau-1}{\tau-2}n-O\Big(n^{1/(\tau-1)}\Big)}\nonumber\\
		=&\frac{(c_Fn)^{1/(\tau-1)}\frac{\tau-1}{\tau-2}(n-j)^{\frac{\tau-2}{\tau-1}}}{c^{1/(\tau-1)}_F\frac{\tau-1}{\tau-2}n-O\Big(n^{1/(\tau-1)}\Big)}+O\Big(n^{-\frac{\tau-2}{\tau-1}}\Big).
		\end{align}
		Using the \textcolor{black}{inequalities $\log(1-x)>-x-x^2$ and $e^y\leq 1+y+y^2$, valid for $x\in (0,0.69)$ and $0\leq y\leq 1$,} respectively, a simple computation yields
		\begin{align}\label{90PrimePrime}
		\frac{(c_Fn)^{1/(\tau-1)}\frac{\tau-1}{\tau-2}(n-j)^{\frac{\tau-2}{\tau-1}}}{c^{1/(\tau-1)}_F\frac{\tau-1}{\tau-2}n-O\Big(n^{1/(\tau-1)}\Big)}&\leq \frac{(c_Fn)^{1/(\tau-1)}\frac{\tau-1}{\tau-2}(n-j)^{\frac{\tau-2}{\tau-1}}}{c^{1/(\tau-1)}_F\frac{\tau-1}{\tau-2}n}\Big(1+O\Big(n^{-\frac{\tau-2}{\tau-1}}\Big)\Big)\nonumber\\
		&=\Big(1-\frac{j}{n}\Big)^{\frac{\tau-2}{\tau-1}}\Big(1+O\Big(n^{-\frac{\tau-2}{\tau-1}}\Big)	\Big).
		\end{align}
		Since $\log(1+x)\leq x$ for all $x>-1$ and $e^{-x}\leq 1-x+x^2$ for $x\geq 0$ we obtain
		\begin{equation*}
		\Big(1-\frac{j}{n}\Big)^{\frac{\tau-2}{\tau-1}}=e^{\frac{\tau-2}{\tau-1}\log(1-j/n)}\leq e^{-\frac{\tau-2}{\tau-1}\frac{j}{n}}\leq 1-\frac{\tau-2}{\tau-1}\frac{j}{n}+\Big(\frac{\tau-2}{\tau-1}\Big)^2\Big(\frac{j}{n}\Big)^2.
		\end{equation*}
		Now 
		\begin{equation}\label{90PrimePrimePrime}
		\bigg(1-\frac{\tau-2}{\tau-1}\frac{j}{n}+\Big(\frac{\tau-2}{\tau-1}\Big)^2\Big(\frac{j}{n}\Big)^2\bigg)O\Big(n^{-\frac{\tau-2}{\tau-1}}\Big)=O\Big(n^{-\frac{\tau-2}{\tau-1}}\Big)
		\end{equation}
		and hence by \eqref{90Prime}, \eqref{90PrimePrime} and \eqref{90PrimePrimePrime} we obtain
		\begin{align*}
		\sum_{m=1}^{n-j}\frac{w_m}{l_n}\leq 1-\frac{\tau-2}{\tau-1}\frac{j}{n}+\Big(\frac{\tau-2}{\tau-1}\Big)^2\Big(\frac{j}{n}\Big)^2+O\Big(n^{-\frac{\tau-2}{\tau-1}}\Big).
		\end{align*}
		Therefore, taking $r\in (0,1)$ we arrive at
		\begin{multline*}
		e^{(e^r-1)\sum_{m=1}^{n-j}\frac{w_m}{l_n}}
		\leq \exp\bigg\{(r+r^2)\bigg(1-\frac{\tau-2}{\tau-1}\frac{j}{n}+\Big(\frac{\tau-2}{\tau-1}\Big)^2\left(\frac{j}{n}\right)^2\bigg)+rO\Big(n^{-\frac{\tau-2}{\tau-1}}\Big)\bigg\}.
		\end{multline*}
		Using once more the bound $e^{x}\leq 1+x+x^2$, valid for $x\in [0,1]$, we see that last exponential is at most
		\[1+(r+r^2)\bigg(1-\frac{\tau-2}{\tau-1}\frac{j}{n}+\Big(\frac{\tau-2}{\tau-1}\Big)^2\Big(\frac{j}{n}\Big)^2\bigg)+2r^2+rO\Big(n^{-\frac{\tau-2}{\tau-1}}\Big).\]
		Consequently
		\begin{align*}
		e^{(e^r-1)\sum_{m=1}^{n-j}\frac{w_m}{l_n}}-1\leq& (r+r^2)\bigg(1-\frac{\tau-2}{\tau-1}\frac{j}{n}+\Big(\frac{\tau-2}{\tau-1}\Big)^2\Big(\frac{j}{n}\Big)^2\bigg)+2r^2+rO\Big(n^{-\frac{\tau-2}{\tau-1}}\Big)\\
		=& (r+r^2)a(j,\tau,n)+2r^2+rO\Big(n^{-\frac{\tau-2}{\tau-1}}\Big),
		\end{align*}
		where we set
		\begin{equation*}
		a(j,\tau,n) \coloneqq 1-\frac{\tau-2}{\tau-1}\frac{j}{n}+\Big(\frac{\tau-2}{\tau-1}\Big)^2\Big(\frac{j}{n}\Big)^2.
		\end{equation*}
		Summarizing, so far we managed to show that
		\begin{align*}
		&\mathbb{E}_{Q_{\beta^*}}\Big[e^{rF_{\beta^*+j}}\Big]\le \sum_{x=1}^{n}\frac{w_x}{l_n}\exp\bigg\{w_x\bigg[(r+r^2)a(j,\tau,n)+2r^2+rO\Big(n^{-\frac{\tau-2}{\tau-1}}\Big)\bigg]\bigg\}.
		\end{align*}
		If $r\leq 1/w_1$ then the exponential term within last sum is at most
		\begin{equation*}
		1+w_x\bigg[(r+r^2)a(j,\tau,n)+2r^2+rO\Big(n^{-\frac{\tau-2}{\tau-1}}\Big)\bigg]
		+w^2_x\Big[(r+r^2)a(j,\tau,n)+2r^2+rO\Big(n^{-\frac{\tau-2}{\tau-1}}\Big)\Big]^2
		\end{equation*}
		and
		\begin{equation*}
		\Big[(r+r^2)a(j,\tau,n)+2r^2+rO\Big(n^{-\frac{\tau-2}{\tau-1}}\Big)\Big]^2=r^2a(j,\tau,n)^2+O(r^3)\leq r^2+O(r^3).
		\end{equation*}
		Consequently, since $a(j,\tau,n)\leq 1$ we obtain
		\begin{multline*}
		\exp\Big\{w_x\Big[(r+r^2)a(j,\tau,n)+2r^2+rO\Big(n^{-\frac{\tau-2}{\tau-1}}\Big)\Big]\Big\}\\
		\leq 1+w_x\Big[(r+r^2)a(j,\tau,n)+2r^2+rO\Big(n^{-\frac{\tau-2}{\tau-1}}\Big)\Big]
		+w^2_x\Big[r^2+O(r^3)\Big].
		\end{multline*}
		Therefore when $r\leq 1/w_1$ we arrive at
		\begin{align*}
		&\mathbb{E}_{Q_{\beta^*}}\Big(e^{rF_{\beta^*+j}}\Big)\\
		&\leq \sum_{x=1}^{n}\frac{w_x}{l_n}\Big(1+w_x\Big[(r+r^2)a(j,\tau,n)+2r^2+rO\Big(n^{-\frac{\tau-2}{\tau-1}}\Big)\Big]+w^2_x(r^2+O(r^3))\Big)\\
		&=1+\Big[(r+r^2)a(j,\tau,n)+2r^2+rO\Big(n^{-\frac{\tau-2}{\tau-1}}\Big)\Big]\sum_{x=1}^{n}\frac{w^2_x}{l_n}+(r^2+O(r^3))\sum_{i=1}^{n}\frac{w^3_x}{l_n}\\
		&=1+(r+r^2)a(j,\tau,n)\nu_n+2r^2\nu_n+rO\Big(n^{-\frac{\tau-2}{\tau-1}}\Big)\nu_n+(r^2+O(r^3))\mathbb{E}((W^*_n)^2)\\
		&\leq \exp\Big\{(r+r^2)a(j,\tau,n)\nu_n+2r^2\nu_n+rO\Big(n^{-\frac{\tau-2}{\tau-1}}\Big)\nu_n+(r^2+O(r^3))\mathbb{E}((W^*_n)^2)\Big\},
		\end{align*}
		where last inequality follows from the fact that $1+x\leq e^x$ for all $x\in \mathbb{R}$. Using once more the inequality $a(j,\tau,n)\leq 1$ and since $\nu_n=O(1)$ we see that the last expression is at most
		\begin{equation*}
		\exp\Big\{ra(j,\tau,n)\nu_n +3r^2\nu_n +rO\Big(n^{-\frac{\tau-2}{\tau-1}}\Big)+(r^2+O(r^3))\mathbb{E}((W^*_n)^2)\Big\}.
		\end{equation*}
		Thus, using the definition of $a(j,\tau,n)$, we arrive at
		\begin{multline*}
		\mathbb{E}_{Q_{\beta^*}}\Big[e^{r(F_{\gamma^*+j}-1)}\Big]	\leq \exp\bigg\{(r^2+O(r^3))\mathbb{E}((W^*_n)^2)-r\nu_n\frac{\tau-2}{\tau-1}\frac{j}{n}\bigg\}\cdot\\
		\cdot \exp\bigg\{r(\nu_n-1)+ r\nu_n\Big(\frac{\tau-2}{\tau-1}\Big)^2\Big(\frac{j}{n}\Big)^2+3r^2\nu_n +rO\Big(n^{-\frac{\tau-2}{\tau-1}}\Big)\bigg\}
		\end{multline*}
		and hence we obtain
		\begin{multline*}
		\mathbb{E}_{Q_{\beta^*}}\Big[e^{r\sum_{j=1}^{t}(|\widetilde{\mathcal{M}}_{j}|-1)}\Big]\leq \exp\bigg\{(r^2+O(r^3))t\mathbb{E}((W^*_n)^2)-r\nu_n\frac{\tau-2}{\tau-1}\frac{t^2}{2n}\bigg\}\cdot\\
		\cdot \exp\bigg\{rt(\nu_n-1)+ r\nu_n\Big(\frac{\tau-2}{\tau-1}\Big)^2O\Big(\frac{t^3}{n^2}\Big)+3r^2t\nu_n +rtO\Big(n^{-\frac{\tau-2}{\tau-1}}\Big)\bigg\}.
		\end{multline*}
		Now since $t\leq T\ll n$, $r\leq 1/w_1$ and $w_1\asymp n^{1/\tau-1}$ we see that
		\textcolor{black}{\begin{equation*}
			rtO\Big(n^{-\frac{\tau-2}{\tau-1}}\Big)= O\Big(\frac{T}{w_1}n^{-\frac{\tau-2}{\tau-1}}\Big)=O(T/n)\ll 1.
			\end{equation*}}
		Moreover, 
		\begin{equation*}
		r\nu_n\Big(\frac{\tau-2}{\tau-1}\Big)^2O\Big(\frac{t^3}{n^2}\Big)= O\Big(\frac{T^3}{n^2w_1}\Big)
		\end{equation*}
		and consequently  for $n$ large enough we arrive at
		\begin{align*}
		\mathbb{E}_{Q_{\beta^*}}\Big(e^{r\sum_{j=1}^{t}(|\widetilde{\mathcal{M}}_{j}|-1)}\Big)\leq &2\exp\bigg\{(r^2t\mathbb{E}((W^*_n)^2)\Big(1+\frac{c'}{w_1}\Big)-r\nu_n\frac{\tau-2}{\tau-1}\frac{t^2}{2n}\bigg\}\cdot\\
		&\cdot \exp\bigg\{rt(\nu_n-1)+ \bar{c}\Big(\frac{T^3}{n^2w_1}\Big)+3r^2t\nu_n \bigg\}
		\end{align*}
		for some finite constants $c',\bar{c}>0$, which is the desired result.
	\end{proof}

	\newpage
	\section*{Appendix}
	
	\begin{proof}[Proof of Proposition \ref{newnewprop}]
		Consider the graph $NR_n(\mathbf{w})$ and the cluster exploration process \textbf{Alg.1} (resp. \textbf{Alg.2} and \textbf{Alg.3}) with $V_n$ denoting the vertex from which \textbf{Alg.1} (resp. \textbf{Alg.2} and \textbf{Alg.3}) starts. Consider $(\mathcal{A}_t)_{t\geq 0}$, $(\mathcal{E}_t)_{t\geq 0}$ and $(\mathcal{U}^*_t)_{t\geq 1}$, the sequences of active  and explored vertices and  the sequence of unseen neighbors of $u_t$, respectively, where  $u_t$ is the vertex under investigation at time $t\geq 1$ by  \textbf{Alg.1} (resp. \textbf{Alg.2} and \textbf{Alg.3}). Recall that $\mathcal{A}_0=\{V_n\}$ and $u_1=V_n$.
		
		Consider also the procedure \textbf{Alg.1.BP} (resp. \textbf{Alg.2.BP} and \textbf{Alg.3.BP}) exploring the thinned Poisson branching processes, with $J_0$ denoting the mark of the root of the corresponding tree from which \textbf{Alg.1.BP} (resp. \textbf{Alg.2.BP} and \textbf{Alg.3.BP}) starts.  Consider $(\mathcal{A}_t^{BP})_{t\geq 0}$, $(\mathcal{E}_t^{BP})_{t\geq 0}$ and $(\widetilde{\mathcal{M}}_t)_{t\geq 1}$, the sequences of active and explored marks and the sequence of sets of \textit{distinct} marks assigned to the children of $v_t$, respectively, where $v_t$ is the vertex corresponding to the mark $m_t^{BP}$ in \textbf{Alg.1.BP} (resp. \textbf{Alg.2.BP} and \textbf{Alg.3.BP}). Recall that $\mathcal{A}_0^{BP}=\{J_0\}$ and $m_1^{BP}=J_0$.
		
		We claim that the sequence $\big((\mathcal{A}_t, \mathcal{E}_t)\big)_t$ has the same distribution as the sequence $\big((\mathcal{A}_t^{BP},\mathcal{E}_t^{BP})\big)_t$.
		
		Assuming the claim is true, recalling that $t_0=\tau_0=0$ and (for $j\geq 1$) $t_j=\min\{t>t_{j-1}:\mathcal{A}_t>0\}$ while $\tau_j=\min\{t>\tau_{j-1}:\mathcal{A}_t^{BP}>0\}$, then the two sequences of stopping times $(t_j)_{j\geq 0}$ and $(\tau_j)_{j\geq 0}$ are equal in distribution. Finally, since $|C_j|=t_j-t_{j-1}$, then  in distribution $|C_j|=\tau_j-\tau_{j-1}$. Hence in the rest of the proof we focus on establishing the claim.
		
		We proceed by induction. By definition, $V_n$ and $J_0$ have the same distribution. Since $\mathcal{A}_0=\{V_n\}$, $\mathcal{E}_0=[n]\setminus \mathcal{A}_0$, $\mathcal{A}_0^{BP}=\{J_0\}$ and $\mathcal{E}_0^{BP}=[n]\setminus \mathcal{A}_0^{BP}$, then  $(\mathcal{A}_0, \mathcal{E}_0) $ and  $(\mathcal{A}_0^{BP},\mathcal{E}_0^{BP})$ have the same distribution . Let's now assume that the claim is true until $t-1$ (for some $t \in \mathbb{N}$).
		
		Suppose that $|\mathcal{A}_{t-1}|>0$.  
		In the procedure \textbf{Alg.1} (resp. \textbf{Alg.2} and \textbf{Alg.3}), take a vertex $u_t$ which is the vertex in $\mathcal{A}_{t-1}$ with the smallest label. Given that $u_t=m$, then $u_t$ is connected with the vertex $j\in \mathcal{U}^*_{t}$ with probability $1-\exp(w_{m}w_j/l_n)$. Moreover, the connections to different vertices in $\mathcal{U}^*_{t}$  are independent by assumption. For an integer $L$, let us denote by $[n]^L$ stand for the collection of all $L$-elements subsets of $[n]$. Thus, if $\{j_1,\dots,j_L\}\in [n]^L\setminus \big(\mathcal{A}_{t-1}\cup \mathcal{E}_{t-1}\big)$, then
		\begin{multline}\label{UnNeigh}
		\mathbb{P}\big(\mathcal{U}^*_{t}=\{j_1,\dots,j_L\}|u_t=m, \big((\mathcal{A}_i,\mathcal{E}_i): i\in [1,t-1]\big)\big)\\
		=\mathbb{P}\big(mj_h\in E(\textbf{w})\text{ }\forall h\in [L], mj_h\notin E(\textbf{w})\text{ }\forall h\in [L]|u_t=m, \big((\mathcal{A}_i,\mathcal{E}_i): i\in [1,t-1]\big)\big)\\
		=e^{-w_m \sum_{j\in [n]\setminus \{j_1,\dots,j_L\}}^{}\frac{w_j}{l_n}}\prod_{h\in [L]}^{}\big(1-e^{-w_m w_{j_h}/l_n}\big).
		\end{multline}
		Now assume that $|\mathcal{A}_{t-1}^{BP}|>0$ and define, for $j\in [n]$
		\begin{equation}
		N^{(t)}_{j}\coloneqq \left|\{l\in \mathcal{M}_t:J^{v_t}_l=j\}\right|, 
		\end{equation}
		the number of children of  $v_t$ carrying the mark $j$ in \textbf{Alg.1.BP} (resp. \textbf{Alg.2.BP} and \textbf{Alg.3.BP}), where $v_t$ is the vertex corresponding to the mark $m_t^{BP}$, and $m_t^{BP}$ is the smallest element in $\mathcal{A}_{t-1}^{BP}$. 
		
		Given  $m_t^{BP}=m$, the random variable $|\mathcal{M}_t|=X_{v_t}$ follows a Poisson distribution with rate $w_m$; thus, conditionally on $m_t^{BP}=m$, the random variable $N^{(t)}_{j}$ counts the number of outcomes equal to $j$ in a multinomial experiment with $\text{Poisson}(w_m)$ (independent) trials.  Therefore, setting $k=\sum_{j\in [n]}^{}k_j$ we obtain
		\begin{align*}
		\mathbb{P}(N^{(t)}_{j}=k_j \hspace{0.15cm}\forall j \in [n]|m_t^{BP}=m)&=\mathbb{P}(N^{(t)}_{j}=k_j \hspace{0.15cm}\forall j \in [n], X_{v_t}=k|m_t^{BP}=m)\\
		&=\mathbb{P}(X_{v_t}=k|m_t^{BP}=m)\mathbb{P}(N^{(t)}_{j}=k_j \hspace{0.15cm}\forall j \in [n]|X=k,m_t^{BP}=m)\\
		&=e^{-w_m}\frac{w^k_m}{k!}\binom{k}{k_1,\dots,k_n}\Big(\frac{w_1}{l_n}\Big)^{k_1}\cdots\Big(\frac{w_n}{l_n}\Big)^{k_n}\\
		&=e^{-w_m}\prod_{j\in [n]}^{}\frac{(w_m w_j/l_n)^{k_j}}{k_j!}\\
		&=e^{-w_m\sum_{j\in [n]}^{}\frac{w_j}{l_n}}\prod_{j\in [n]}^{}\frac{(w_m w_j/l_n)^{k_j}}{k_j!}\\
		&=\prod_{j\in [n]}^{}e^{-w_m w_j/l_n}\frac{(w_m w_j/l_n)^{k_j}}{k_j!}.
		\end{align*}
		That is, conditionally on  $m_t^{BP}=m$, $(N^{(t)}_{j}:j\in [n])$ is a sequence of independent random variables such that $N^{(t)}_{j}$ has the $\text{Poisson}(w_mw_j/l_n)$ distribution, for $j\in [n]$. 
		
		Recall also that $|\mathcal{M}_t|\geq |\widetilde{\mathcal{M}}_t|$, where $\widetilde{\mathcal{M}}_t$ is the set of distinct marks of children of $v_t$ (that is, the set of marks which did not appear at previous steps and which does not contain duplicates). 
		
		Then observe that, for $\{j_1,\dots,j_L\}\in [n]^L\setminus \big(\mathcal{A}_{t-1}^{BP}\cup \mathcal{E}_{t-1}^{BP}\big)$,
		\begin{multline}\label{NewMarks}
		\mathbb{P}\big(\widetilde{\mathcal{M}}_t=\{j_1,\dots,j_L\}|m_t^{BP}=m, \big((\mathcal{A}_i^{BP},\mathcal{E}_i^{BP}): i\in [1,t-1]\big)\big)\\ 
		=\mathbb{P}\big(N^{(t)}_{j_h}\geq 1 \hspace{0.15cm}\forall h\in [L], N^{(t)}_{j}=0\hspace{0.15cm}\forall j\in [n]\setminus \{j_1,\dots,j_L\}|m_t^{BP}=m, \big((\mathcal{A}_i^{BP},\mathcal{E}_i^{BP}): i\in [1,t-1]\big)\big)\\
		=e^{-w_m \sum_{j\in [n]\setminus \{j_1,\dots,j_L\}}^{}\frac{w_j}{l_n}}\prod_{h\in [L]}^{}\big(1-e^{-w_m w_{j_h}/l_n}\big).
		\end{multline}
		Consequently we obtain that $(u_t\cup\mathcal{U}^*_{t})$ and  $(m_t^{BP}\cup\widetilde{\mathcal{M}}_t)$, have the same distribution.
		
		When $\mathcal{A}_{t-1}=0$ and $\mathcal{E}_{t-1}\neq n$, $u_t$ is a random vertex chosen from $[n]\setminus \mathcal{E}_{t-1}$ with probability proportional to its weight, that is with probability $w_j/l'_n(t)$, for $j\in [n]\setminus \mathcal{E}_{t-1}$, and where $l'_n(t)\coloneqq \sum_{i\in[n]\setminus\mathcal{E}_{t-1}}w_i$.  Thus, given $u_t=m$ and $\big((\mathcal{A}_i,\mathcal{E}_i): i\in [1,t-1]\big)$, for $\{j_1,\dots,j_L\}\in [n]^L\setminus \big(\{m\}\cup \mathcal{E}_{t-1}\big)$ we obtain  
		\begin{multline}\label{UnNeigh1}
		\mathbb{P}\big(\mathcal{U}^*_{t}=\{j_1,\dots,j_L\}|u_t=m, \big((\mathcal{A}_i,\mathcal{E}_i): i\in [1,t-1]\big)\big)\\
		=\mathbb{P}\big(j_hm \in E(\textbf{w})\hspace{0.15cm}\forall h\in [L], jm \notin E(\textbf{w})\hspace{0.15cm}\forall j\in [n]\setminus\{j_1,\dots,j_L\}|u_t=m, \big((\mathcal{A}_i,\mathcal{E}_i): i\in [1,t-1]\big)\big)\\
		=e^{-w_m \sum_{j\in [n]\setminus \{j_1,\dots,j_L\}}^{}\frac{w_j}{l_n}}\prod_{h\in [L]}^{}\big(1-e^{-w_m w_{j_h}/l_n}\big).
		\end{multline}
		Similarly, when $\mathcal{A}_{t-1}^{BP}=0$ and $\mathcal{E}_{t-1}^{B}\neq n$, $m_t^{BP}$ is a random mark chosen from $[n]\setminus \mathcal{E}_{t-1}^{BP}$ with probability proportional to its weight, that is with probability $w_j/l'_n(t)$, for $j\in [n]\setminus \mathcal{E}_{t-1}^{BP}$,  and where $l'_n(t)\coloneqq \sum_{i\in[n]\setminus\mathcal{E}_{t-1}^{BP}}w_i$. Thus, given $m_t^{BP}=m$ and $\big((\mathcal{A}_i^{BP},\mathcal{E}_i^{BP}): i\in [1,t-1]\big)$, for $\{j_1,\dots,j_L\}\in [n]^L\setminus \big(\{m\}\cup \mathcal{E}_{t-1}^{BP}\big)$  we obtain 
		\begin{multline}\label{NewMarks1}
		\mathbb{P}\big(\widetilde{\mathcal{M}}_t=\{j_1,\dots,j_L\}|m_t^{BP}=m, \big((\mathcal{A}_i^{BP},\mathcal{E}_i^{BP}): i\in [1,t-1]\big)\big)\\ 
		=\mathbb{P}\big(N^{(t)}_{j_h}\geq 1 \hspace{0.15cm}\forall h\in [L], N^{(t)}_{j}=0\hspace{0.15cm}\forall j\in [n]\setminus \{j_1,\dots,j_L\}|m_t^{BP}=m, \big((\mathcal{A}_i^{BP},\mathcal{E}_i^{BP}): i\in [1,t-1]\big)\big)\\
		= e^{-w_m \sum_{j\in [n]\setminus \{j_1,\dots,j_L\}}^{}\frac{w_j}{l_n}}\prod_{h\in [L]}^{}\big(1-e^{-w_m w_{j_h}/l_n}\big).
		\end{multline}
		Therefore we conclude that also in this case $(u_t\cup\mathcal{U}^*_{t})$ and  $(m_t^{BP}\cup\widetilde{\mathcal{M}}_t)$, have the same distribution.
		
		Finally, assuming that $\mathcal{A}_{t-1}>0$, we have $\mathcal{A}_t=(\mathcal{U}^*_{t} \cup \mathcal{A}_{t-1})\setminus\{u_t\}$ and $\mathcal{E}_t=(\mathcal{E}_{t-1} \cup \{u_t\})$; also, when $\mathcal{A}_{t-1}^{BP}>0$ we have $\mathcal{A}_t^{BP}=(\widetilde{\mathcal{M}}_{t} \cup \mathcal{A}_{t-1}^{BP})\setminus\{m_t^{BP}\}$ and $\mathcal{E}_t^{BP}=(\mathcal{E}_{t-1}^{BP} \cup \{m_t^{BP}\})$. Similarly,
		when $\mathcal{A}_{t-1}=0$ we have $\mathcal{A}_t=\mathcal{U}^*_{t}$ and $\mathcal{E}_t=(\mathcal{E}_{t-1} \cup \{u_t\})$,  and when $\mathcal{A}_{t-1}^{BP}=0$ we have $\mathcal{A}_t^{BP}=\widetilde{\mathcal{M}}_{t}$ and $\mathcal{E}_t^{BP}=(\mathcal{E}_{t-1}^{BP} \cup \{m_t^{BP}\})$. Then using the inductive hypothesis and our previous findings we conclude that $(\mathcal{A}_t,\mathcal{E}_t)$  is distributed like $(\mathcal{A}_t^{BP},\mathcal{E}_t^{BP})$, and the sequences $\big((\mathcal{A}_i,\mathcal{E}_i)\big)_{i\leq t}$ and $\big((\mathcal{A}_i^{BP},\mathcal{E}_i^{BP})\big)_{i\leq t}$ are equal in distribution until time $t$.  
	\end{proof}

	\vskip0.5cm
	
	\section*{Acknowledgements} 
	Both authors thank Guillem Perarnau for interesting discussions concerning the martingale method of Nachmias and Peres, as well as an anonymous referee for useful suggestions that helped improving the quality of the paper.

\end{document}